\documentclass[12pt]{amsart}

\usepackage{amssymb}

\usepackage[cmtip,all]{xy}

\usepackage{verbatim}
\usepackage{upref}
\usepackage[usenames,dvipsnames]{color}
\usepackage{url}
\usepackage[normalem]{ulem}

\newtheorem{thm}{Theorem}[section]
\newtheorem*{thm*}{Theorem}
\newtheorem{lem}[thm]{Lemma}
\newtheorem{cor}[thm]{Corollary}
\newtheorem{prop}[thm]{Proposition}
\newtheorem*{prop*}{Proposition}

\theoremstyle{definition}
\newtheorem{defn}[thm]{Definition}
\newtheorem*{defn*}{Definition}

\newtheorem{ex}[thm]{Example}
\newtheorem{notn}[thm]{Notation}

\newtheorem*{notn*}{Notation}

\newtheorem*{hyp*}{Hypothesis}

\theoremstyle{remark}
\newtheorem{rem}[thm]{Remark}
\newtheorem*{rem*}{Remark}

\numberwithin{equation}{section}

\newcommand{\secref}[1]{Section~\textup{\ref{#1}}}

\newcommand{\thmref}[1]{Theorem~\textup{\ref{#1}}}
\newcommand{\corref}[1]{Corollary~\textup{\ref{#1}}}
\newcommand{\lemref}[1]{Lemma~\textup{\ref{#1}}}
\newcommand{\propref}[1]{Proposition~\textup{\ref{#1}}}

\newcommand{\defnref}[1]{Definition~\textup{\ref{#1}}}
\newcommand{\remref}[1]{Remark~\textup{\ref{#1}}}

\newcommand{\exref}[1]{Example~\textup{\ref{#1}}}

\newcommand{\righttext}[1]{\quad\text{#1 }}

\renewcommand{\)}{\textup)}
\newcommand{\ie}{\emph{i.e.}}

\newcommand{\N}{\mathbb N}
\newcommand{\Z}{\mathbb Z}

\newcommand{\R}{\mathbb R}
\newcommand{\C}{\mathbb C}

\newcommand{\F}{\mathbb F}

\newcommand{\KK}{\mathcal K}

\newcommand{\JJ}{\mathcal J}

\newcommand{\LL}{\mathcal L}

\newcommand{\EE}{\mathcal E}

\newcommand{\RR}{\mathcal R}

\newcommand{\MM}{\mathcal M}
\newcommand{\NN}{\mathcal N}

\renewcommand{\AA}{\mathcal A}

\newcommand{\Chi}{\raisebox{2pt}{\ensuremath{\chi}}}
\renewcommand{\epsilon}{\varepsilon}

\DeclareMathOperator{\ad}{Ad}
\DeclareMathOperator*{\spn}{span}
\DeclareMathOperator*{\clspn}{\overline{\spn}}

\newcommand{\id}{\text{\textup{id}}}

\newcommand{\<}{\langle}
\renewcommand{\>}{\rangle}
\newcommand{\then}{\ensuremath{\Rightarrow}}
\newcommand{\minus}{\setminus}
\newcommand{\under}{\backslash}
\newcommand{\inv}{^{-1}}

\renewcommand{\bar}{\overline}
\newcommand{\what}{\widehat}
\newcommand{\wilde}{\widetilde}
\newcommand{\rt}{\textup{rt}}
\newcommand{\ann}{^\perp}
\newcommand{\pann}{{}\ann}


\newcommand{\si}{_\text{si}}

\begin{document}

\title{Properness conditions for actions and coactions}



\author[Kaliszewski]{S. Kaliszewski}
\address{School of Mathematical and Statistical Sciences
\\Arizona State University
\\Tempe, Arizona 85287}
\email{kaliszewski@asu.edu}

\author[Landstad]{Magnus~B. Landstad}
\address{Department of Mathematical Sciences\\
Norwegian University of Science and Technology\\
NO-7491 Trondheim, Norway}
\email{magnusla@math.ntnu.no}

\author[Quigg]{John Quigg}
\address{School of Mathematical and Statistical Sciences
\\Arizona State University
\\Tempe, Arizona 85287}
\email{quigg@asu.edu}

\subjclass[2000]{Primary  46L55}

\keywords{crossed product,
action,
proper action,
coaction, 
Fourier-Stieltjes algebra,
exact sequence,
Morita compatible}

\dedicatory{Dedicated to R.~V. Kadison --- teacher and inspirator}

\date{March 31, 2015} 

\begin{abstract}
Three properness conditions for actions of locally compact groups on $C^*$-algebras are studied,
as well as their dual analogues for coactions.
To motivate the properness conditions for actions, the commutative cases (actions on spaces) are surveyed; here the conditions are known:
proper, locally proper, and pointwise properness, although the latter property has not been so well studied in the literature. The basic theory of these properness conditions is summarized, with somewhat more attention paid to pointwise properness.
$C^*$-characterizations of the properties are proved,
and applications to $C^*$-dynamical systems are examined.
This paper is partially expository, but some of the results are believed to be new.
\end{abstract}
\maketitle


\section{Introduction}\label{intro}

In our recent study of
$C^*$-covariant systems $(A,G,\alpha)$ 
and
crossed product algebras between the full crossed product $A\rtimes_\alpha G$ and  the regular crossed product $A\rtimes_{\alpha,r} G$, it turns out that various generalizations of the concept of proper actions of $G$ 
play an important role. We 
therefore 
start by taking
a closer look at this concept, and it turns out that even for  a classical  action of $G$  on a space $X$ we made what we believe to be new discoveries.

Classically (going back to Bourbaki \cite{topologie}), a $G$-space $X$ is called \emph{proper} if the map from $G\times X$ to $X\times X$ given by
\[
(s,x)\mapsto (x,sx)
\]
is proper, i.e., inverse images of compact sets are compact.

We call the action \emph{pointwise proper} if the map from $G$ to $X$ given by
\[
s\mapsto sx
\]
is proper for each $x\in X$. 

There is also an intermediate property: $X$ is  \emph{locally proper} if each point of $X$ has a $G$-invariant neighbourhood 
on which $G$ acts properly.

Apparently the above terminology is not completely standard.
For a discrete group, \cite{varadarajan} uses the terms
\emph{discontinuous}, \emph{properly discontinuous}, and \emph{strongly properly discontinuous}
instead of
pointwise proper, locally proper, and proper, respectively.
Palais uses \emph{Cartan} instead of locally proper.
And \cite{koszul} uses the terms $P_2$, $P_1$, and $P$, respectively.
A characteristic property of properness (see \lemref{compact wandering} below) is sometimes referred to as ``compact sets are wandering''.

It is folklore that for proper $G$-spaces $X$ the full crossed product $C_0(X) \rtimes_{\alpha} G$ 
 is isomorphic to  the reduced crossed product $C_0(X) \rtimes_{\alpha,r} G$ (see \cite{phillipsproper} for the second countable case).
In \propref{locally regular} (perhaps also folklore) we show that this carries over to locally proper actions.
We will show in \thmref{pointwise regular} (believed to be new) that this is true also if $X$ is first countable, but the action is only assumed to be pointwise proper.

We propose the following as natural generalizations of properness to a general $C^*$-covariant system $(A,G,\alpha)$:

\begin{defn*}
\quad

\begin{itemize}
\item $(A,G,\alpha)$ is  {\it s-proper} if for all $a,b\in A$ the map
\[
g\mapsto \alpha_g(a)b \text{ is in } C_0(G,A).
\]
\item
$(A,G,\alpha)$ is  {\it w-proper} if for all $a\in A$, $\phi\in A^*$ the map
\[
g\mapsto \phi(\alpha_g(a)) \text{ is in } C_0(G).
\]
\end{itemize} 
\end{defn*}

This is consistent with the classical case, for $A=C_0(X)$ we have 
\begin{align*}
(X,G) \text{ is proper} &\iff (C_0(X),G)  \text{ is s-proper } \\
(X,G) \text{ is pointwise proper} &\iff (C_0(X),G)  \text{ is w-proper. } 
\end{align*}

One indication that w-properness is an interesting property is the following

\begin{prop*} Suppose
$(A,G,\alpha)$ is   w-proper, $\pi$ a representation of $A$, and $s\mapsto U_s$  a continuous map into the unitaries
\(but not necessarily a homomorphism\) such that $\pi(\alpha_s(a))=U_s\pi(a)U_s^*$. Then for all $\xi,\eta$ in the Hilbert space the
coefficient function
$s\mapsto \<U_s\xi,\eta\>$ is in $C_0(G)$.
\end{prop*}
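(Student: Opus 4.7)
The plan is to combine the covariance identity with Cauchy--Schwarz so that $|\langle U_s\xi,\eta\rangle|^2$ is dominated by a function of the form $\phi(\alpha_{s^{-1}}(c))$, to which the w-proper hypothesis applies directly. Since $\|U_s\|=1$, we have
\[
|\langle U_s\xi,\eta\rangle - \langle U_s\xi',\eta'\rangle| \le \|\xi-\xi'\|\,\|\eta\| + \|\xi'\|\,\|\eta-\eta'\|
\]
uniformly in $s$, so by sup-norm closedness of $C_0(G)$ it suffices to handle vectors of the form $\xi=\pi(a)\xi_0$ and $\eta=\pi(b)\eta_0$. These are dense provided $\pi$ is assumed nondegenerate, which I take to be implicit (otherwise the statement must be read on the essential subspace $\overline{\pi(A)H}$, since the covariance constrains $U_s$ only there).

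For such $\xi,\eta$, I would use $U_s\pi(a)=\pi(\alpha_s(a))U_s$ to rewrite
\[
\langle U_s\pi(a)\xi_0,\pi(b)\eta_0\rangle = \langle\pi(b^*\alpha_s(a))U_s\xi_0,\eta_0\rangle,
\]
and then apply Cauchy--Schwarz to bound
\[
|\langle U_s\xi,\eta\rangle|^2 \le \|\pi(b^*\alpha_s(a))U_s\xi_0\|^2\,\|\eta_0\|^2.
\]
The key manipulation is the identity
\[
\|\pi(b^*\alpha_s(a))U_s\xi_0\|^2 = \langle\pi(\alpha_s(a^*\alpha_{s^{-1}}(bb^*)a))U_s\xi_0,U_s\xi_0\rangle = \langle\pi(\alpha_{s^{-1}}(bb^*))\pi(a)\xi_0,\pi(a)\xi_0\rangle,
\]
where the first equality uses only the $*$-homomorphism properties of $\alpha_s$ and $\pi$, and the second applies $\pi\circ\alpha_s=\ad U_s\circ\pi$ together with $U_s^*U_s=1$ to eliminate all $U_s$-factors.

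Setting $\phi(T):=\langle\pi(T)\pi(a)\xi_0,\pi(a)\xi_0\rangle\in A^*$, the estimate becomes $|\langle U_s\xi,\eta\rangle|^2\le \phi(\alpha_{s^{-1}}(bb^*))\,\|\eta_0\|^2$. By w-properness, $s\mapsto\phi(\alpha_s(bb^*))$ is in $C_0(G)$, and since inversion is a self-homeomorphism of $G$ fixing the point at infinity, so is $s\mapsto\phi(\alpha_{s^{-1}}(bb^*))$. This yields the vanishing at infinity, while continuity of the coefficient function is immediate from continuity of $s\mapsto U_s$. The main place one might worry is the failure of $s\mapsto U_s$ to be a homomorphism; however, only the single covariance identity and unitarity are needed for the $U_s$-factors to cancel in the Cauchy--Schwarz estimate, so this obstacle turns out to be illusory.
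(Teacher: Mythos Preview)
Your proof is correct and follows essentially the same route as the paper's: reduce to vectors in $\pi(A)H$ via nondegeneracy, use the covariance identity together with Cauchy--Schwarz to dominate $|\langle U_s\xi,\eta\rangle|^2$ by an expression of the form $\omega(\alpha_t(c))$ with $\omega\in A^*$, and then invoke w-properness. The only difference is economy: the paper factors just one vector, writing $\xi=\pi(a)\xi'$ and bounding $|\langle U_s\pi(a)\xi',\eta\rangle|=|\langle U_s\xi',\pi(\alpha_s(a^*))\eta\rangle|\le\|\xi'\|\,\langle\pi(\alpha_s(aa^*))\eta,\eta\rangle^{1/2}$, which avoids both the second factorization and the inversion step.
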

We treat the classical situation of a $G$-space $X$ in Sections~\ref{spaces} and \ref{c star ramifications}, and discuss general $C^*$-covariant systems in \secref{c star general}.

For a $C^*$-covariant system $(A,G,\alpha)$,
there are various definitions of properness (by Rieffel and others) involving some integrability properties. We show in \secref{rieffel proper} that they  imply s- or w-properness. The main purpose of these integrability properties is to define a suitable fixed point algebra in $M(A)$, so our properness definitions are too general for this purpose.

The natural dual concept of a $C^*$-covariant system
is that of  a \emph{coaction}. As we briefly describe in \secref{coactions}, it turns out that 
s- and w-properness can be defined in a similar way for coactions, and we describe some of the relevant results.

In \secref{E} we describe a general construction of crossed product algebras between
$A\rtimes_{\alpha} G$  and  $A\rtimes_{\alpha,r} G$.  We claim that the interesting ones are obtained by first taking as our group  $C^*$-algebra  $C^*(G)/I$ where $I$ is a \emph{small} ideal of $C^*(G)$ (\ie\  $I$ is $\delta_G$-invariant and contained in the kernel of the regular representation $\lambda$ of $C^*(G)$).
We showed in [KLQ13] that  $I$ is a  small ideal of $C^*(G)$ if and only if
 the annihilator $E=I\ann$ in $B(G)$ is a \emph{large} ideal,
in the sense that it is a nonzero, weak* closed, and $G$-invariant ideal of the Fourier-Stieltjes algebra
$B(G)$. There are various interesting examples, see  [BG] and  [KLQ13].

Now to a $C^*$-covariant system $(B,G,\alpha)$ and $E$ as above one can define an $E$-crossed product $B\rtimes_{\alpha,E} G$ between the full  and the reduced crossed product. In  [KLQ13] we show that if the coaction is w-proper then there is a Galois theory describing these crossed products.

Finally we mention the work by  Kirchberg, Baum, Guentner, and Willet \cite{bgwexact} on the Baum-Connes conjecture. They have shown that
there is a unique minimal exact and Morita compatible functor
that assigns to a $C^*$-covariant system $(A,G,\alpha)$ a $C^*$-algebra between $A\rtimes_\alpha G$ and $A\rtimes_{\alpha,r} G$.
At least one of the authors doubts that this minimal functor is an $E$-crossed product for some large ideal $E$, although this remains an open problem.

In Sections~\ref{spaces}--\ref{full=reduced} we give a fairly detailed exposition, in particular proofs of results we believe to be new. Sections~\ref{coactions}--\ref{E} will  be more
descriptive, referring to the literature for details and proofs.

\section{Actions on spaces}\label{spaces}

Throughout, $G$ will be a locally compact group, 
$A$ will be a $C^*$-algebra,
and $X$ will be a locally compact Hausdorff space.
We will be concerned with actions $\alpha$ 
of 
$G$
on 
$A$,
and we just say $(A,\alpha)$ is an action
since the group $G$ will typically be fixed.
If $G$ acts on $X$ then
we sometimes call $X$ a \emph{$G$-space},
and
the \emph{associated action} $(C_0(X),\alpha)$ is defined by
\[
\alpha_s(f)(x)=f(s\inv x)\righttext{for}s\in G,f\in C_0(X),x\in X.
\]
Recall that, since the map $(s,x)\mapsto sx$ from $G\times X$ to $X$ is continuous,
the associated action $\alpha$ is \emph{strongly continuous} in the sense that
for all $f\in C_0(X)$ the map $s\mapsto \alpha_s(f)$ from $G$ to $C_0(X)$ is continuous for the uniform norm.

The following notation is borrowed from Palais \cite{palais}:

\begin{notn}
If $G$ acts on $X$,
then for two subsets $U,V\subset X$
we define
\[
((U,V))=\{s\in G:sU\cap V\ne\varnothing\}.
\]
\end{notn}
Note that if $U$ and $V$ are compact then $((U,V))$ is closed in $G$.

Much of the following discussion of actions on spaces is well-known;
we present it in a formal way for convenience.
We make no attempt at completeness,
but at the same time we include many proofs to make this exposition self-contained.
When a result can be explicitly found in \cite{palais}, we give a precise reference,
but lack of such a reference should not be taken as any claim of originality.
In much of the literature on proper actions the spaces are only required to be Hausdorff, or completely regular;
in the proofs we will take full advantage of our assumption that our spaces are locally compact Hausdorff.

\begin{defn}\label{proper defn}
A $G$-space $X$
is \emph{proper} if the map
$\phi:X\times G\to X\times X$ defined by
$\phi(x,s)=(x,sx)$
is proper,
i.e., inverse images of compact sets are compact.
\end{defn}

The following is routine,
and explains why properness is sometimes referred to as ``compact sets are wandering''
(e.g., \cite[Situation~2]{rieffelapplications}):
\begin{lem}\label{compact wandering}
A $G$-space $X$
is proper if and only if for every compact $K\subset X$ the set $((K,K))$ is compact,
equivalently for every compact $K,L\subset X$ the set $((K,L))$ is compact.
\end{lem}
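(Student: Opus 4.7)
The plan is to chase the definition of $\phi^{-1}(K\times L)$ and use the fact (noted in the excerpt) that $((U,V))$ is closed in $G$ whenever $U,V$ are compact.

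First I would unwind the definition: $(x,s)\in\phi^{-1}(K\times L)$ exactly when $x\in K$ and $sx\in L$. Hence the projection of $\phi^{-1}(K\times L)$ onto $G$ is precisely $((K,L))$, and $\phi^{-1}(K\times L)\subset K\times((K,L))$. Since $\phi$ is continuous and $K\times L$ is closed, $\phi^{-1}(K\times L)$ is a closed subset of $X\times G$.

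Now the two implications split cleanly. If $X$ is proper, then $\phi^{-1}(K\times L)$ is compact, and $((K,L))$ is its continuous image under the second projection, so it is compact. Conversely, suppose $((K,L))$ is compact for all compact $K,L\subset X$. Then $K\times((K,L))$ is compact, and $\phi^{-1}(K\times L)$ is a closed subset of it, so $\phi^{-1}(K\times L)$ is compact. To upgrade from ``boxes'' $K\times L$ to arbitrary compact subsets of $X\times X$, observe that any compact $C\subset X\times X$ sits inside $\pi_1(C)\times\pi_2(C)$, which is itself a box $K\times L$ of compacts; then $\phi^{-1}(C)$ is a closed subset of the compact set $\phi^{-1}(K\times L)$, and we conclude $\phi$ is proper.

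Finally I would prove the ``equivalently'' clause. One direction is trivial. For the other, assume $((K,K))$ is compact for every compact $K\subset X$, and take arbitrary compacts $K,L\subset X$. Set $M=K\cup L$; then $M$ is compact, and $sK\cap L\ne\varnothing$ forces $sM\cap M\ne\varnothing$, so $((K,L))\subset((M,M))$. Since $((K,L))$ is closed in $G$ (because $K,L$ are compact) and sits inside the compact set $((M,M))$, it is compact.

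I do not expect any real obstacle here; the only mildly subtle point is the reduction from general compact $C\subset X\times X$ to product boxes, which is handled by the projection trick above.
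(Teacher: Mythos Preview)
Your argument is correct in every detail. The paper does not actually supply a proof of this lemma --- it simply declares the result ``routine'' and moves on --- so there is nothing substantive to compare; your write-up is exactly the kind of verification the authors had in mind to leave to the reader.
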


\begin{ex}
If $H$ is a closed subgroup of $G$, then it is an easy exercise that
the action of $G$ on the homogeneous space $G/H$ by translation is proper if and only if $H$ is compact.
\end{ex}

The following result is contained in \cite[Theorem~1.2.9]{palais}.

\begin{prop}
A $G$-space $X$ is proper if and only if for all $x,y\in X$ there are neighborhoods $U$ of $x$ and $V$ of $y$ such that $((U,V))$ is relatively compact.
\end{prop}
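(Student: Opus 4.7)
The plan is to prove both directions, with the forward direction being essentially immediate from \lemref{compact wandering} and the converse being a standard compactness argument.

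For the forward implication, I would assume $X$ is proper and fix arbitrary $x,y\in X$. Since $X$ is locally compact Hausdorff, choose compact neighborhoods $U$ of $x$ and $V$ of $y$. Then \lemref{compact wandering} gives that $((U,V))$ is compact, hence certainly relatively compact. This direction is a one-liner.

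For the converse, I would use \lemref{compact wandering} in the reverse: it suffices to show that $((K,L))$ is compact whenever $K,L\subset X$ are compact. Fix compact $K,L$. By hypothesis, for every pair $(x,y)\in K\times L$ there exist open neighborhoods $U_{x,y}$ of $x$ and $V_{x,y}$ of $y$ with $\overline{((U_{x,y},V_{x,y}))}$ compact. The family $\{U_{x,y}\times V_{x,y}\}$ is an open cover of the compact set $K\times L$, so extract a finite subcover $\{U_i\times V_i\}_{i=1}^n$. I then claim
\[
((K,L))\subset\bigcup_{i=1}^n ((U_i,V_i)).
\]
Indeed, if $s\in((K,L))$ then $sx=y$ for some $x\in K$, $y\in L$; the point $(x,y)$ lies in some $U_i\times V_i$, whence $sU_i\cap V_i\ni y$ and $s\in((U_i,V_i))$. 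Thus $((K,L))$ is contained in the compact set $\bigcup_i\overline{((U_i,V_i))}$. Since $K,L$ are compact, the observation immediately following the notation tells us $((K,L))$ is closed in $G$; being a closed subset of a compact set, it is itself compact.

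I do not anticipate any real obstacle: the forward direction is trivial after choosing compact neighborhoods, and the converse is a textbook compactness-of-products argument together with the closedness of $((K,L))$ for compact $K,L$. The only minor point to watch is that the hypothesis provides \emph{relatively} compact sets $((U_i,V_i))$ (not necessarily compact), which is why one needs the closedness of $((K,L))$ to pass from containment in a compact superset back to compactness of $((K,L))$ itself.
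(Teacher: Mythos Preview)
Your proof is correct. The forward direction matches the paper's exactly, but for the converse you take a different route: you cover $K\times L$ by finitely many boxes $U_i\times V_i$ and observe that $((K,L))$ is closed and contained in the compact union $\bigcup_i\overline{((U_i,V_i))}$. The paper instead argues with nets: given a net $\{s_i\}$ in $((K,K))$, it passes to subnets so that $x_i\to x$ and $s_ix_i\to y$, then invokes the hypothesis \emph{once} at the single pair $(x,y)$ to trap a tail of the net in a relatively compact $((U,V))$. Your finite-cover argument is arguably more elementary and avoids nets entirely, at the cost of requiring the explicit observation that $((K,L))$ is closed; the paper's net argument is more in keeping with the style used in the surrounding propositions and does not need that closedness remark. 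Both are standard and equally short.
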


\begin{proof}
One direction is obvious, since if the action is proper we only need to choose the neighborhoods $U$ and $V$ to be compact.

Conversely, assume the condition involving pairs of points $x,y$,
and let $K\subset X$ be compact.
To show that $((K,K))$ is compact, we will prove that any net $\{s_i\}$ in $((K,K))$ has a convergent subnet.
For every $i$ we can choose $x_i\in K$ such that $s_ix_i\in K$.
Passing to subnets and relabeling, we can assume that $x_i\to x$ and $s_ix_i\to y$ for some $x,y\in K$.
By assumption we can choose compact neighborhoods $U$ of $x$ and $V$ of $y$ such that $((U,V))$ is compact.
Without loss of generality, for all $i$ we have $x_i\in U$ and $s_ix_i\in V$, and hence
$s_i\in ((U,V))$.
Thus $\{s_i\}$ has a convergent subnet by compactness.
\end{proof}

\begin{defn}
A $G$-space $X$ is \emph{locally proper} if 
it is a union of open $G$-invariant sets on which $G$ acts properly.
\end{defn}

Palais uses the term \emph{Cartan} instead of locally proper.
The forward direction of the following result is \cite[Proposition~1.2.4]{palais}.

\begin{lem}\label{UU}
A $G$-space $X$ is locally proper if and only if
every
$x\in X$ 
has a
neighborhood $U$ 
such that $((U,U))$ is compact.
\end{lem}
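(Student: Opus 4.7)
The plan is to prove the two implications separately, using Lemma \ref{compact wandering} to convert ``properly acting'' into the language of $((K,K))$.

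For the forward direction, suppose $X$ is locally proper, and fix $x\in X$. Choose an open $G$-invariant set $W$ containing $x$ on which $G$ acts properly. Since $W$ is open in $X$ and hence locally compact, $x$ has a compact neighborhood $U\subset W$. By \lemref{compact wandering} applied to the proper $G$-action on $W$, the set $((U,U))$ computed in $W$ is compact, and because $W$ is $G$-invariant, $gU\cap U$ cannot escape $W$, so this coincides with $((U,U))$ computed in $X$.

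For the converse, assume each $x\in X$ has a neighborhood $U$ with $((U,U))$ compact. First I would shrink $U$ to a compact neighborhood $K$ of $x$ with $K\subset U$; then $((K,K))\subset ((U,U))$, and $((K,K))$ is closed in $G$ (as noted just after the introduction of $((\cdot,\cdot))$), so $((K,K))$ is itself compact. Now set $W=G\cdot\mathring K$; this is an open $G$-invariant neighborhood of $x$, and the goal is to show $G$ acts properly on $W$. By \lemref{compact wandering}, it suffices to show $((L,L))$ is compact for every compact $L\subset W$.

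The key computation is a finite-cover argument. Given such $L$, cover it by finitely many translates $s_1\mathring K,\dots,s_n\mathring K$. If $g\in((L,L))$, then $gy\in L$ for some $y\in L$, so $y\in s_iK$ and $gy\in s_jK$ for some $i,j$, which rearranges to $s_j\inv g s_i\in((K,K))$. Therefore
\[
((L,L))\subset \bigcup_{i,j=1}^n s_j\,((K,K))\,s_i\inv,
\]
a finite union of translates of the compact set $((K,K))$, hence compact; combined with the fact that $((L,L))$ is closed (since $L$ is compact), this yields compactness of $((L,L))$. Taking the union over $x\in X$ of such $W$'s exhibits $X$ as locally proper.

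I do not anticipate a serious obstacle; the only delicate point is being careful that a bare ``neighborhood'' in the hypothesis is refined to a compact neighborhood so that $((K,K))$ is closed (and hence its subsets that we care about inherit compactness from being closed-in-compact), and that the chosen $W=G\mathring K$ is genuinely both open and $G$-invariant so it qualifies for the definition of locally proper.
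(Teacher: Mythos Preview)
Your proof is correct. The forward direction is essentially identical to the paper's. For the converse, you take a slightly different route: the paper invokes the earlier Proposition characterizing properness via pairs of points (for each $y,z$ in $GV$ one has neighborhoods $sV$ and $tV$ with $((sV,tV))=t((V,V))s^{-1}$ relatively compact), whereas you go directly through \lemref{compact wandering} and a finite-cover argument to bound $((L,L))$ by $\bigcup_{i,j} s_j((K,K))s_i^{-1}$. Both rest on the same translation identity, so the difference is largely organizational: the paper's version is shorter because the heavy lifting was already done in that Proposition, while yours is self-contained and would work even if that Proposition were not available.
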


\begin{proof}
First assume 
that the action is locally proper,
and let $x\in X$.
Choose an open $G$-invariant set $V$ containing $x$ on which $G$ acts properly.
Then choose a compact neighborhood $U$ of $x$ contained in $V$.
Then $((U,U))$ is compact by properness.

Conversely, assume the condition involving compact sets $((U,U))$.
Choose an open neighborhood $V$ of $x$ such that $((V,V))$ is relatively compact,
and let $U=GV$.
We will show that the action of $G$ on $U$ is proper.
Let $y,z\in U$.
Choose $s,t\in G$ such that $y\in sV$ and $z\in tV$.
Then we have neighborhoods $sV$ of $y$ and $tV$ of $z$, and
\[
((sV,tV))=t((V,V))s\inv
\]
is relatively compact.
\end{proof}

The following result displays a kind of semicontinuity of the sets $((V,V))$, and in also of the stability subgroups.
The forward direction is \cite[Proposition~1.1.6]{palais}.

\begin{prop}
A $G$-space $X$ is locally proper if and only if for all $x\in X$, the isotropy subgroup $G_x$ is compact and for every neighborhood $U$ of $G_x$ there is a neighborhood $V$ of $x$ such that $((V,V))\subset U$.
\end{prop}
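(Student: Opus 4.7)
The plan is to use Lemma~\ref{UU} to reduce local properness to the condition that every $x\in X$ has a neighborhood $V$ with $((V,V))$ compact (equivalently, relatively compact, since $((V,V))$ is closed when $V$ is compact), and then argue in both directions.

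For the forward implication, assuming $X$ is locally proper, I would first fix $x\in X$ and use Lemma~\ref{UU} to obtain a compact neighborhood $W$ of $x$ with $((W,W))$ compact in $G$. Observe that $G_x\subset((W,W))$; since $G_x$ is closed in $G$ (as the preimage of $\{x\}$ under the continuous map $s\mapsto sx$, using Hausdorffness of $X$), it follows that $G_x$ is compact. For the semicontinuity statement, let $U$ be an open neighborhood of $G_x$, and suppose for contradiction that no neighborhood $V$ of $x$ with $V\subset W$ satisfies $((V,V))\subset U$. Ordering the compact neighborhoods $V$ of $x$ contained in $W$ by reverse inclusion, for each such $V$ pick $s_V\in((V,V))\setminus U$ and $y_V\in V$ with $s_Vy_V\in V$. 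Then $y_V\to x$ (and $s_Vy_V\to x$) since the $V$'s shrink to $\{x\}$, and $\{s_V\}$ is a net in the compact set $((W,W))$, so passing to a subnet $s_V\to s$. Joint continuity of the action gives $s_Vy_V\to sx$, so $sx=x$ and $s\in G_x\subset U$. But $G\setminus U$ is closed and contains each $s_V$, so $s\in G\setminus U$, a contradiction.

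For the reverse direction, assume the hypothesis and fix $x\in X$. I will verify the condition in Lemma~\ref{UU}. Since $G_x$ is compact and $G$ is locally compact, $G_x$ admits a relatively compact open neighborhood $U$ in $G$. By hypothesis there is a neighborhood $V$ of $x$ with $((V,V))\subset U$; using local compactness of $X$, shrink $V$ to a compact neighborhood $V'$ of $x$ with $V'\subset V$. Then $((V',V'))\subset((V,V))\subset U\subset\overline{U}$, and $((V',V'))$ is closed in $G$ (by the remark after Notation on compact pairs), so $((V',V'))$ is a closed subset of the compact set $\overline{U}$, hence compact. By Lemma~\ref{UU}, $X$ is locally proper.

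The only subtle step is the net argument in the forward direction, where I must simultaneously track $s_V\in((V,V))\setminus U$ and the witness points $y_V,s_Vy_V\in V$. Once one confines the $V$'s to a compact neighborhood $W$ with $((W,W))$ compact, the rest is a standard subnet extraction combined with continuity of the action to force the limit $s$ into $G_x$.
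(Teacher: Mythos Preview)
Your proof is correct and follows essentially the same route as the paper's: both directions hinge on \lemref{UU}, the forward direction is a contradiction argument extracting a convergent subnet inside a compact $((W,W))$ to force the limit into $G_x$, and the converse picks a compact neighborhood of $G_x$ and applies the hypothesis. Your write-up is slightly more explicit than the paper's in two places --- you spell out why $G_x$ is compact (closed inside the compact $((W,W))$), and you shrink $V$ to a compact $V'$ so that $((V',V'))$ is genuinely compact rather than just relatively compact --- but these are presentational refinements, not a different argument.
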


\begin{proof}
First assume that the action is locally proper.
We argue by contradiction.
Suppose we have $x\in X$ and a neighborhood $U$ of $G_x$ such that for every neighborhood $V$ of $x$ there exists $s\in ((V,V))$ such that $s\notin U$.
Fix a neighborhood $R$ of $x$ such that $((R,R))$ is compact.
Restricting to neighborhoods $V$ of $x$ with $V\subset R$,
we see that
we can find nets $\{s_i\}$ in the complement $U^c$ and $\{y_i\}$ in $R$ such that
\begin{itemize}
\item $s_iy_i\in R$ for all $i$,
\item $y_i\to x$, and
\item $s_iy_i\to x$.
\end{itemize}
Then $s_i\in ((R,R))$ for all $i$,
so passing to subnets and relabeling we can assume that $s_i\to s$ for some $s\in G$.
Then $s_iy_i\to sx$, so $sx=x$.
Thus $s\in G_x$.
But then eventually $s_i\in U$, which is a contradiction.

Conversely, assume the condition regarding isotropy groups and neighborhoods thereof,
and let $x\in X$.
Since $G_x$ is compact, we can choose a compact neighborhood $U$ of $G_x$,
and then we can choose a neighborhood $V$ of $x$ such that $((V,V))\subset U$.
Then $((V,V))$ is relatively compact,
and we have shown that the action is locally proper.
\end{proof}

The following result is contained in \cite[Theorem~1.2.9]{palais}.

\begin{prop}
A $G$-space $X$ is proper if and only if
it is locally proper and
$G\under X$ is Hausdorff.
\end{prop}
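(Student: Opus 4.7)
The plan is to prove the two implications separately, appealing to the earlier characterization of properness (via pairs of points with relatively compact $((U,V))$) for the reverse direction and to the closedness of proper maps for the forward direction.

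For the forward direction, suppose $X$ is proper. Local properness is immediate, taking the open $G$-invariant set to be $X$ itself. To see that $G\under X$ is Hausdorff, I would first observe that the orbit map $q\colon X\to G\under X$ is always open, since $q\inv(q(U))=GU$ is open whenever $U\subset X$ is open. Hence $q\times q$ is a continuous open surjection, in particular a quotient map, so $G\under X$ is Hausdorff if and only if the orbit equivalence relation $R=\{(x,y):Gx=Gy\}=(q\times q)\inv(\Delta_{G\under X})$ is closed in $X\times X$. But $R$ is precisely the image of $\phi\colon(x,s)\mapsto(x,sx)$, and a continuous proper map into a locally compact Hausdorff space is closed, so $R$ is closed.

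For the reverse direction, suppose $X$ is locally proper and $G\under X$ is Hausdorff. I would verify the criterion from the earlier proposition by producing, for each $x,y\in X$, neighborhoods $U\ni x$ and $V\ni y$ with $((U,V))$ relatively compact. The argument splits according to whether $x$ and $y$ lie in the same orbit. If $Gx\ne Gy$, Hausdorffness of $G\under X$ yields disjoint open $G$-invariant neighborhoods $U$ of $Gx$ and $V$ of $Gy$, and then $sU=U$ for every $s\in G$ forces $((U,V))=\varnothing$. If instead $y=tx$ for some $t\in G$, then \lemref{UU} provides a neighborhood $W$ of $x$ with $((W,W))$ compact, and the identity $((W,tW))=t\cdot((W,W))$ (a direct unwinding of the definition) shows that $U=W$ and $V=tW$ work.

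The step I expect to be the principal obstacle is the appeal in the forward direction to the fact that a continuous proper map into a locally compact Hausdorff space is closed; this is a standard but non-trivial point from general topology, whereas the rest of the argument is essentially bookkeeping once the earlier proposition and \lemref{UU} are in hand.
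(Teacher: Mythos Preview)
Your proof is correct but follows a genuinely different route from the paper's.

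For the forward direction, the paper argues directly with nets: if $\{Gx_i\}$ converges to both $Gx$ and $Gy$, it passes to subnets so that $x_i\to x$ and $s_ix_i\to y$, then uses compactness of $((U,V))$ for compact neighborhoods $U\ni x$, $V\ni y$ to extract a convergent subnet of $\{s_i\}$, forcing $y\in Gx$. Your argument is more structural: you identify the orbit relation $R$ as the image of the proper map $\phi$, invoke the fact that proper maps into locally compact Hausdorff spaces are closed, and then use the standard criterion that an open quotient is Hausdorff if and only if the relation is closed. Your version packages the analysis into known topological facts and is arguably cleaner, at the cost of importing the ``proper implies closed'' result from general topology (which you rightly flag as the one nontrivial external input).

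For the reverse direction, the paper gives a single uniform net argument: for arbitrary $x,y$ it picks a compact neighborhood $U$ of $x$ with $((U,U))$ compact and any compact neighborhood $V$ of $y$, then shows any net in $((U,V))$ has a convergent subnet by using Hausdorffness of $G\under X$ to relate the limit points in $U$ and $V$ via a group element. You instead split into cases according to whether $Gx=Gy$: distinct orbits are separated by disjoint invariant open sets (giving $((U,V))=\varnothing$), while for $y=tx$ you translate the set $((W,W))$ from \lemref{UU}. Your case split is shorter and more transparent, though the paper's argument has the minor virtue of treating all pairs uniformly. Both approaches ultimately rest on the same earlier characterization of properness via relatively compact $((U,V))$.
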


\begin{proof}
First assume that the action is proper.
Then it is locally proper, and
to show that $G\under X$ is Hausdorff, we will prove that if a net $\{Gx_i\}$ in $G\under X$ converges to both $Gx$ and $Gy$ then $Gx=Gy$.
Since the quotient map $X\to G\under X$ is open,
we can pass to a subnet and relabel so that without loss of generality $x_i\to x$.
Then again passing to a subnet and relabeling we can find $s_i\in G$ such that $s_ix_i\to y$.
Choose 
compact
neighborhoods $U$ of $x$ and $V$ of $y$,
so
that $((U,V))$ is compact by properness.
Without loss of generality $x_i\in U$ and $s_ix_i\in V$ for all $i$.
Then $s_i\in ((U,V))$ for all $i$,
so by compactness we can pass to subnets and relabel so that $\{s_i\}$ converges to some $s\in G$.
Then $s_ix_i\to sx$, so $sx=y$, and hence $Gx=Gy$.

Conversely, assume that the action is locally proper and $G\under X$ is Hausdorff.
Let $x,y\in X$.
By assumption we can choose a compact neighborhood $U$ of $x$ such that $((U,U))$ is compact.
Now choose any compact neighborhood $V$ of $y$.
To show that the action is proper, we will prove that $((U,V))$ is compact.
Let $\{s_i\}$ be any net in $((U,V))$.
For each $i$ choose $x_i\in U$ such that $s_ix_i\in V$.
By compactness we can pass to subnets and relabel so that $x_i\to z$ and $s_ix_i\to w$ for some $z\in U$ and $w\in V$.
Then by Hausdorffness we can write
\[
Gz=\lim Gx_i=\lim Gs_ix_i=Gw,
\]
so we can choose $s\in G$ such that $w=sz$.
Then $s_ix_i\to sz$, so
\[
s\inv s_ix_i\to z.
\]
Without loss of generality, for all $i$ we can assume that $s\inv s_ix_i\in U$,
so that $s\inv s_i\in ((U,U))$.
By compactness we can pass to subnets and relabel so that $s\inv s_i\to t$ for some $t\in G$.
Thus $s_i\to st$, and we have found a convergent subnet of $\{s_i\}$.
Thus $((U,V))$ is compact.
\end{proof}

\begin{ex}\label{locally proper not proper}
It is a well-known fact in topological dynamics that
there are actions
that are 
locally proper but not proper,
e.g., the action of $\Z$ on 
\[
[0,\infty)\times [0,\infty)\minus \{(0,0)\}
\]
generated by the homeomorphism $(x,y)\mapsto (2x,y/2)$, where any compact neighborhood of $\{(1,0),(0,1)\}$ meets itself infinitely often.
This action is locally proper because its restriction to each of the open sets
$[0,\infty)\times (0,\infty)$ and 
$(0,\infty)\times [0,\infty)$,
which cover the space,
are proper.
A closely related example is given by letting $\R$ act on the same space by
$s(x,y)=(e^sx,e^{-s}y)$.
\end{ex}

\begin{defn}
A $G$-space $X$ is \emph{pointwise proper} if for all $x\in X$ and compact $K\subset X$,
the set $((x,K))$ is compact.
\end{defn}

The above properness condition does not seem to be very often studied in the dynamics literature, and the
term we use is not standard, as far as we have been able to determine.

It is obvious that the above definition can be reformulated as follows:

\begin{lem}\label{orbit proper}
A $G$-space $X$ is pointwise proper if and only if 
for every $x\in X$ the map $s\mapsto sx$ from $G$ to $X$
is proper.
\end{lem}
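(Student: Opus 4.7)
The plan is to observe that this lemma is essentially a direct translation of definitions, once we unpack the notation $((x,K))$. Writing $\phi_x\colon G\to X$ for the orbit map $\phi_x(s)=sx$, the preimage of any subset $K\subset X$ is
\[
\phi_x\inv(K)=\{s\in G:sx\in K\}=((\{x\},K))=((x,K)),
\]
where the last equality is just the notational convention from \notnref{proper defn}'s preceding notation (treating the singleton $\{x\}$ as $x$).

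With this identification, the forward direction says: if $((x,K))$ is compact for every compact $K\subset X$, then $\phi_x\inv(K)$ is compact for every compact $K$, which is precisely the definition of properness of $\phi_x$. The reverse direction is the same statement read backwards.

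So the proof is a single line unpacking the notation $((x,K))$ as the preimage $\phi_x\inv(K)$, and then citing the definition of a proper map (inverse images of compact sets are compact). There is no real obstacle here; the content of the lemma is merely to record, for later reference, that the abstract pointwise-proper condition in \defnref{proper defn}'s analogue is equivalent to the more geometric statement about individual orbit maps. I would write two short sentences: one displaying the equality $\phi_x\inv(K)=((x,K))$, and one concluding the equivalence.
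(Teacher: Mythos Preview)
Your proposal is correct and matches the paper's treatment: the paper gives no proof at all, merely introducing the lemma with ``It is obvious that the above definition can be reformulated as follows,'' which is exactly the content of your one-line unpacking of $((x,K))=\phi_x\inv(K)$.
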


\begin{prop}\label{T1}
If a $G$-space $X$ is pointwise proper then orbits are closed, and hence $G\under X$ is $T_1$.
\end{prop}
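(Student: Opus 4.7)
The plan is straightforward: show directly that every orbit $Gx$ is closed, then deduce the $T_1$ statement from the definition of the quotient topology. Let $y$ lie in the closure of $Gx$. Since $X$ is locally compact Hausdorff, $y$ has a compact neighborhood $K$, and there is a net $\{s_i\}\subset G$ with $s_ix\to y$. Eventually $s_ix\in K$, so eventually $s_i$ lies in the set $((x,K))=\{s\in G:sx\in K\}$.

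By pointwise properness, reformulated as the fact that the orbit map $s\mapsto sx$ is proper (\lemref{orbit proper}), the preimage $((x,K))$ of the compact set $K$ is compact in $G$. Passing to a subnet and relabeling, I may assume $s_i\to s$ for some $s\in G$. Continuity of the action then gives $s_ix\to sx$, so $y=sx\in Gx$. Thus $Gx$ is closed.

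For the final clause, let $q\colon X\to G\under X$ denote the quotient map. For each $x\in X$, the preimage $q\inv(\{Gx\})=Gx$ is closed by what we have just shown, so $\{Gx\}$ is closed in $G\under X$ by the definition of the quotient topology. Since $x$ was arbitrary, every point of $G\under X$ is closed, i.e., $G\under X$ is $T_1$.

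There is no real obstacle to this argument; the only point worth noting is how the three ingredients combine. Local compactness of $X$ provides a compact neighborhood of the limit point $y$ in which the net $s_ix$ is eventually trapped; pointwise properness then transfers this compactness from $X$ back to $G$, producing a convergent subnet of $\{s_i\}$; and continuity of the action finishes the job.
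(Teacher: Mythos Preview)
Your proof is correct and follows essentially the same argument as the paper: take a compact neighborhood of the limit point, use pointwise properness to conclude that the corresponding set $((x,K))$ is compact, extract a convergent subnet in $G$, and finish by continuity of the action. The paper omits the explicit verification that $G\under X$ is $T_1$, leaving it as an immediate ``hence''; your added paragraph spelling this out via the quotient map is fine but not a genuine departure.
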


\begin{proof}
Let $x\in X$, and suppose we have a net $\{s_ix\}$ in the orbit $Gx$ converging to $y\in X$.
Choose a compact neighborhood $U$ of $y$.
Without loss of generality, for all $i$ we have $s_ix\in U$,
and hence $s_i\in ((x,U))$.
This set is compact by pointwise properness, so passing to a subnet and relabeling we can assume that $s_i\to s$ for some $s\in G$.
Then $s_ix\to sx$, so $y=sx\in Gx$.
\end{proof}

\begin{notn}
For $x\in X$ let $G_x$ denote the isotropy subgroup.
\end{notn}

\begin{prop}
A $G$-space $X$ is pointwise proper if and only if for all $x\in X$
the isotropy subgroup $G_x$ is compact and
the map
$s\mapsto sx$ from $G$ to $Gx$
is relatively open,
equivalently,
the action of $G$ on the orbit $Gx$ is conjugate to the action on the homogeneous space $G/G_x$.
\end{prop}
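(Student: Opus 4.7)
Assume $X$ is pointwise proper, and fix $x \in X$. Since $\{x\}$ is compact, the set $G_x = \{s \in G : sx = x\} = ((x,\{x\}))$ is compact by pointwise properness. Next, the orbit map $\pi_x\colon G \to X$, $s \mapsto sx$, satisfies $\pi_x^{-1}(K) = ((x,K))$ compact for every compact $K \subseteq X$, and is continuous; so $\pi_x$ is a proper continuous map between locally compact Hausdorff spaces, hence closed. In particular $Gx = \pi_x(G)$ is closed in $X$, and the induced equivariant bijection $\tilde\pi_x\colon G/G_x \to Gx$ (where $Gx$ carries the subspace topology from $X$) is a continuous closed bijection between locally compact Hausdorff spaces, hence a homeomorphism. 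Composing with the open quotient $G \to G/G_x$ shows that $\pi_x\colon G \to Gx$ is relatively open, and the actions on $Gx$ and $G/G_x$ are conjugate.

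\textbf{Backward direction.} Conversely, assume for every $x \in X$ that $G_x$ is compact and $\tilde\pi_x\colon G/G_x \to Gx$ is a homeomorphism. Fix $x \in X$ and compact $K \subseteq X$; I want $((x,K))$ compact. Factor the orbit map as
\[
G \xrightarrow{\; q\;} G/G_x \xrightarrow{\;\tilde\pi_x\;} Gx \hookrightarrow X,
\]
where the quotient map $q$ is proper because $G_x$ is compact (if $L \subseteq G/G_x$ is compact, cover it by the image of a compact $V \subseteq G$ under the open map $q$; then $q^{-1}(L) \subseteq V G_x$, a continuous image of a compact set). Then
\[
((x,K)) = q^{-1}\bigl(\tilde\pi_x^{-1}(K\cap Gx)\bigr),
\]
so it suffices to show $K \cap Gx$ is compact as a subset of $Gx$ (subspace topology from $X$), equivalently that $\tilde\pi_x^{-1}(K\cap Gx)$ is compact in $G/G_x$. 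Take a net $\{y_\alpha\} \subseteq K \cap Gx$; by compactness of $K$ we may pass to a subnet with $y_\alpha \to y$ in $K \subseteq X$. The crux is to argue that $y \in Gx$, for then $y_\alpha \to y$ holds also in $Gx$ by Hausdorffness, producing the convergent subnet we need.

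\textbf{Main obstacle.} The genuinely nontrivial step is showing $y\in Gx$, i.e.\ ruling out $y \in \bar{Gx} \setminus Gx$. Here I would apply the hypothesis at $y$: since $Gy \cong G/G_y$ with $G_y$ compact, $Gy$ is locally compact in the subspace topology, hence locally closed in $X$; this constrains how a net in a different orbit $Gx$ can accumulate near $y$. Combined with the parallel hypothesis at $x$ (which makes $Gx$ itself locally closed, hence open in its closure), this pair of structural constraints forces $y \in Gx$, effectively recovering the closed-orbit property of Proposition~\ref{T1} as a consequence of the pointwise hypothesis. Identifying the right topological argument to turn these two local compactness facts into the conclusion $y\in Gx$ is the heart of the proof.
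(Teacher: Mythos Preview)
Your forward direction is correct and arguably cleaner than the paper's: you use the standard fact that a proper continuous map between locally compact Hausdorff spaces is closed, whereas the paper argues directly with nets.

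Your backward direction, however, has a genuine gap --- precisely the one you yourself flag as ``the heart of the proof.'' You correctly reduce to showing that $K\cap Gx$ is compact in $Gx$ (equivalently, that $y\in Gx$), but the sketch you offer, based on both $Gx$ and $Gy$ being locally closed, cannot be completed: the converse as stated is \emph{false}. Here is a counterexample. Let $G=\Z$ act on $X=\Z\times\{0,1\}$ by translation in the first coordinate, with the following topology: each $(m,1)$ is isolated, and a neighborhood base at $(n,0)$ is $U_{n,N}=\{(n,0)\}\cup\{(n+2^j,1):j\ge N\}$ for $N\ge 0$. One checks that $X$ is locally compact Hausdorff (even second countable), the action is continuous and free, and both orbits $\Z\times\{0\}$ and $\Z\times\{1\}$ carry the discrete subspace topology --- so $G/G_x\to Gx$ is a homeomorphism for every $x$, and all stabilizers are trivial, hence compact. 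But $\Z\times\{1\}$ is not closed (each $(n,0)$ lies in its closure), and the orbit map $k\mapsto (k,1)$ is not proper: the compact set $U_{0,0}\subset X$ pulls back to the infinite set $\{2^j:j\ge 0\}\subset\Z$. Thus the hypothesis holds at every point while pointwise properness fails.

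The paper's one-line proof of the converse (``obvious, since if $G_x$ is compact the action of $G$ on $G/G_x$ is proper'') glosses over exactly the issue you isolated: properness of the action on $G/G_x\cong Gx$ only yields compactness of $((x,L))$ for $L$ compact \emph{in $Gx$}, and one still needs $K\cap Gx$ compact in $Gx$ for arbitrary compact $K\subset X$, which fails when $Gx$ is not closed. So you have not missed a trick --- the converse needs an extra hypothesis such as closedness of orbits, and the paper's statement is in error.
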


\begin{proof}
First assume that the action is pointwise proper,
and let $x\in X$.
Then $G_x$ is trivially compact.
By homogeneity it suffices to show that the map
$s\mapsto sx$ from $G$ to $Gx$
is relatively open at $e$.
Let $W$ be a neighborhood of $e$.
Suppose that $Wx$ is not a relative neighborhood of $x$ in the orbit $Gx$.
Then we can choose a net $\{s_i\}$ in $G$ such that $s_ix\notin Wx$ and $s_ix\to x$.
Choose a neighborhood $U$ of $x$ such that $((U,U))$ is compact.
Without loss of generality, for all $i$ we have $s_ix\in U$, and so $s_i\in ((x,U))$.
By compactness we can pass to a subnet and relabel so that $s_i\to s$ for some $s\in G$.
Then $s_ix\to sx$. Thus $sx=x$, and so $s\in G_x$.
But then eventually $s_i\in WG_x$, which is a contradiction because
$WG_xx=Wx$.

The converse is obvious, since if $G_x$ is compact the action of $G$ on $G/G_x$ is proper.
\end{proof}

We will show that pointwise properness is weaker than local properness, but for this we need a version of \propref{T1} for local properness.
The following result is contained in \cite[Proposition~1.1.4]{palais}.

\begin{lem}
If a $G$-space $X$ is locally proper then orbits are closed.
\end{lem}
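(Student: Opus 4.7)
The plan is to show that any limit point of an orbit $Gx$ lies back in $Gx$, by reducing to a $G$-invariant open neighborhood on which the action is actually proper and then applying \lemref{compact wandering}.

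First I would take a net $\{s_ix\}$ in $Gx$ converging to some $y\in X$, and invoke local properness to choose an open $G$-invariant set $V\subset X$ containing $y$ such that the restricted action of $G$ on $V$ is proper. Since $s_ix\to y$ and $V$ is open, eventually $s_ix\in V$; and since $V$ is $G$-invariant, this forces $x=s_i^{-1}(s_ix)\in V$ as well. So both $x$ and the tail of the orbit net lie in the proper $G$-space $V$.

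Next I would choose compact neighborhoods $L$ of $x$ and $K$ of $y$ inside $V$ (using local compactness of $X$ and shrinking into $V$ if necessary). By properness of the action on $V$ and \lemref{compact wandering}, the set $((L,K))$ is compact. After passing to a subnet and relabeling, I may assume $s_ix\in K$ for all $i$, and of course $x\in L$, so $s_i\in((L,K))$ for all $i$. Compactness of $((L,K))$ then yields a further subnet with $s_i\to s$ for some $s\in G$. Continuity of the action gives $s_ix\to sx$, and uniqueness of limits in $X$ (Hausdorff) forces $y=sx\in Gx$.

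The only mildly delicate point is step two: one has to use the $G$-invariance of $V$ to pull $x$ itself into $V$, since local properness only directly provides $V$ around the limit point $y$. Everything else is a straightforward application of the ``compact sets are wandering'' characterization of properness inside $V$.
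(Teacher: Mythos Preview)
Your argument is correct and follows the same reduction as the paper: pass to a $G$-invariant open set on which the action is proper, observe that $x$ lands there by invariance, and conclude $y\in Gx$. The only cosmetic difference is that the paper, rather than running the compact-wandering extraction by hand, simply notes that the proper action on this open set is in particular pointwise proper and invokes \propref{T1} to get closed orbits there; your direct argument via $((L,K))$ is exactly what that citation unpacks to.
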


\begin{proof}
Let $x\in X$, and suppose we have a net $\{s_i\}$ in $G$ such that $s_ix\to y$.
Choose an open $G$-invariant subset $U$ containing $y$ on which $G$ acts properly.
Then the action of $G$ on $U$ is pointwise proper, so the orbit $Gx$ is closed in $U$,
and hence $y\in Gx$.
\end{proof}

\begin{cor}
If a $G$-space $X$ is locally proper then it is pointwise proper.
\end{cor}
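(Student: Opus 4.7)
The plan is to use the criterion of \lemref{orbit proper} (equivalently the definition) and show that for every $x\in X$ and compact $K\subset X$ the set $((x,K))$ is compact, by exhibiting a convergent subnet for an arbitrary net in $((x,K))$. The two main inputs will be the preceding lemma (that locally proper actions have closed orbits) and the compactness of $((W,W))$ for a suitable compact neighborhood $W$ of $x$ inside an invariant open set on which $G$ acts properly.

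More precisely, fix $x\in X$ and compact $K\subset X$, and let $\{s_i\}$ be a net in $((x,K))$, so $s_ix\in K$ for all $i$. First, using compactness of $K$, pass to a subnet and relabel so that $s_ix\to y$ for some $y\in K$. By the preceding lemma the orbit $Gx$ is closed, so $y=sx$ for some $s\in G$, and thus $s\inv s_ix\to x$.

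Next, invoke local properness to choose an open $G$-invariant set $V$ containing $x$ on which $G$ acts properly, and then a compact neighborhood $W$ of $x$ contained in $V$. Since $V$ is $G$-invariant, the set $((W,W))$ computed inside $V$ agrees with $((W,W))$ computed inside $X$, and by \lemref{compact wandering} applied to the proper action on $V$ it is compact. Eventually $s\inv s_ix\in W$ and also $x\in W$, so eventually $s\inv s_i\in((W,W))$. Passing once more to a subnet and relabeling, $s\inv s_i\to t$ for some $t\in G$, hence $s_i\to st$. Since $K$ is closed in $X$ (being compact Hausdorff), the limit $st$ lies in $((x,K))$, so $((x,K))$ is compact.

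The proof is essentially routine given what has already been established, so there is no serious obstacle; the only point that deserves a moment's care is the passage from convergence in $X$ to convergence in $G$, which is exactly what the use of \lemref{UU} on the invariant neighborhood $V$ provides.
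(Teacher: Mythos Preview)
Your proof is correct and uses the same two key ingredients as the paper's---the closedness of orbits (the preceding lemma) and the existence of an invariant open set on which the action is proper---but the paper packages them more efficiently. Rather than chasing nets, the paper observes directly that $((x,K))=((x,K\cap Gx))$, that $L:=K\cap Gx$ is compact because $Gx$ is closed, and that $L\subset U$ because $U$ is $G$-invariant and contains $x$; then $((x,L))$ is compact by properness of the action on $U$ applied to the compact pair $\{x\},L$. Your net argument essentially unwinds this: passing to a subnet to land on a limit in $Gx$, translating back near $x$, and then invoking compactness of $((W,W))$ amounts to the same properness statement on a smaller compact set. The paper's version is shorter and avoids the subnet bookkeeping, while yours has the minor advantage of making visible how convergence in $X$ is promoted to convergence in $G$.
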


\begin{proof}
Let $x\in X$.
Choose an open $G$-invariant set $U\subset X$ such that the action of $G$ on $U$ is proper.
Let $K\subset X$ be compact,
and put $L=K\cap Gx$.
Then $L$ is compact because $Gx$ is closed, and $L\subset U$.
Thus
$((x,K))=((x,L))$
is compact because $\{x\}$ and $L$ are compact subsets of $U$ and $G$ acts properly on $U$.
\end{proof}

\begin{ex}\label{pointwise not local}
This example is
taken from
\cite[Example~5 in Section~2]{varadarajan}.
Recall that in \exref{locally proper not proper} 
we had an action of $\Z$ on the space
\[
X=\bigl([0,\infty)\times [0,\infty)\bigr)\minus\{(0,0\}
\]
generated by the homeomorphism $(x,y)\mapsto (2x,y/2)$.
We form the quotient of $X$ by identifying
$\{0\}\times (0,\infty)$ with $(0,\infty)\times \{0\}$
via
\[
(0,y)\sim (1/y,0).
\]
Then the action descends to the identification space,
and the quotient action is pointwise proper but not locally proper.
\end{ex}

With suitable countability assumptions, there is a surprise:

\begin{cor}[Glimm]
Let $G$ act on $X$, and assume that $G$ and $X$ are second countable,
and that every isotropy subgroup is compact.
Then the following are equivalent:
\begin{enumerate}
\item the action is pointwise proper;
\item for all $x\in X$ the map $sG_x\mapsto sx$ from $G/G_x$ to $Gx$ is a homeomorphism;
\item $G\under X$ is $T_0$;
\item $G\under X$ is $T_1$;
\item every orbit is locally compact in the relative topology from $X$;
\item every orbit is closed in $X$.
\end{enumerate}
\end{cor}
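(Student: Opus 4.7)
The plan is to run the cycle of implications
\[(1)\Rightarrow(6)\Leftrightarrow(4)\Rightarrow(3)\Rightarrow(5)\Rightarrow(2)\Rightarrow(1),\]
relying on the proposition immediately preceding the corollary (which, given that every $G_x$ is compact, yields $(1)\Leftrightarrow(2)$), on \propref{T1} for $(1)\Rightarrow(6)$, and on elementary topology for the rest: $(4)\Leftrightarrow(6)$ because a subset of $G\under X$ is open iff its preimage in $X$ is open, $(4)\Rightarrow(3)$ trivially, $(6)\Rightarrow(5)$ because closed subspaces of LCH spaces are LCH, and $(2)\Rightarrow(5)$ because $G/G_x$ is LCH.

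For $(5)\Rightarrow(2)$ I would show the orbit map $s\mapsto sx$ from $G$ to $Gx$ is relatively open at $e$, so that $(2)$ follows from the preceding proposition. Fix an open neighborhood $W$ of $e$, choose a compact symmetric neighborhood $V$ of $e$ with $V^2\subset W$, and use second countability to write $G=\bigcup_n s_nV$. Then $Gx=\bigcup_n s_nVx$, with each $s_nVx$ compact and hence closed in the Hausdorff space $Gx$, which is Baire by $(5)$. So some $s_nVx$ has nonempty interior in $Gx$; translating by $s_n\inv$ shows $Vx$ itself has interior in $Gx$ containing some $sx$ with $s\in V$, and $s\inv$ applied to that interior is a relative neighborhood of $x$ in $Gx$ contained in $V\inv Vx\subset Wx$.

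The main obstacle is $(3)\Rightarrow(5)$. Set $Y=\overline{Gx}$, which is closed in $X$, hence LCH and second countable, hence Polish, with $Gx$ dense in $Y$. The $T_0$ hypothesis passes to the subspace $G\under Y$, and combined with the density of $[Gx]$ it forces, for every $z\in Y\minus Gx$, the existence of a $G$-invariant open subset of $Y$ containing $x$ but not $z$ (the reverse split is impossible because any $G$-invariant closed subset of $Y$ meeting $Gx$ must contain $\overline{Gx}=Y$). Using second countability of $G\under Y$ to extract a countable $G$-invariant neighborhood base $\{V_n\}$ of $Gx$ in $Y$, we cover $Y\minus Gx$ by the countable family $\{F_n=Y\minus V_n:x\in V_n\}$ of closed $G$-invariant sets, each disjoint from the dense orbit $Gx$ and therefore nowhere dense in $Y$. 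Writing $G=\bigcup_m K_m$ with $K_m$ compact ($\sigma$-compactness) gives $Gx=\bigcup_m K_mx$ with each $K_mx$ closed in $Y$; Baire's theorem applied to $Y$ then forces some $K_mx$ to have nonempty interior in $Y$. A translation as in the previous step produces a neighborhood of $x$ in $Y$ contained in $Gx$, so $Gx$ is open in $Y$ and hence LCH in the relative topology, giving $(5)$.

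The principal difficulty is precisely $(3)\Rightarrow(5)$: converting the bare $T_0$ condition on $G\under X$ into topological regularity of individual orbits requires combining second countability of $G$ and $X$ (to reduce to countably many separating invariant open sets and to apply $\sigma$-compactness of $G$) with the Baire category theorem inside the Polish space $\overline{Gx}$, together with the $G$-invariance arguments ensuring the separating closed sets lie entirely in $Y\minus Gx$.
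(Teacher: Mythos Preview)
Your proof is correct. The paper's own argument is much terser: it records $(1)\Leftrightarrow(2)$ from the preceding proposition (compact isotropy assumed), then simply \emph{cites} Glimm \cite[Theorem~1]{glimm} for the equivalences $(2)\Leftrightarrow(3)\Leftrightarrow(5)$ in the second-countable setting, and closes the loop with $(1)\Rightarrow(6)\Rightarrow(4)\Rightarrow(3)$ as you do. So the paper treats $(3)\Rightarrow(5)$ and $(5)\Rightarrow(2)$ as black boxes from Glimm, whereas you have unpacked them.

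What you have written for those two implications is essentially Glimm's original Baire-category argument: writing the orbit closure as a Polish space, expressing its complement-of-orbit part as a countable union of nowhere-dense invariant closed sets (this is where $T_0$ and second countability of the orbit space enter), covering the orbit itself by countably many compact pieces via $\sigma$-compactness of $G$, and invoking Baire to force one compact piece to have interior. The translation trick to pass from ``some $K_mx$ has interior'' to ``$Gx$ is open in $\overline{Gx}$'' is exactly right. Your $(5)\Rightarrow(2)$ step is the standard open-mapping argument for $\sigma$-compact groups acting on Baire spaces. So your approach buys self-containment at the cost of length; the paper's buys brevity by deferring to the literature. Both are sound.
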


\begin{proof}
Because we assume that the isotropy groups are compact, we know (1) $\iff$ (2).
Glimm \cite[Theorem~1]{glimm} proves that, in the second countable case,
(2) $\iff$ (3) $\iff$ (5).
We also know (1) $\then$ (6) $\then$ (4).
Finally, (4) $\then$ (3) trivially.
\end{proof}

\section{$C^*$-ramifications}\label{c star ramifications}

Let $X$ be a $G$-space, and let $\alpha$ be the associated action of $G$ on $C_0(X)$.
In this section we examine the ramifications for the action $\alpha$ of the various properness conditions covered in \secref{spaces}.
For the state of the art in the case of proper actions, see \cite{echemeproper}.

\begin{notn}
If $\psi:X\to Y$ is a continuous map between locally compact Hausdorff spaces,
define $\psi^*:C_0(Y)\to C_b(X)$ by $\psi^*(f)=f\circ\psi$.
\end{notn}

It is an easy exercise to show:

\begin{lem}\label{Cc}
For a continuous map $\psi:X\to Y$ between locally compact Hausdorff spaces,
the following are equivalent:
\begin{enumerate}
\item $\psi$ is proper
\item $\psi^*$ maps $C_0(Y)$ into $C_0(X)$
\item $\psi^*$ maps $C_c(Y)$ into $C_c(X)$.
\end{enumerate}
\end{lem}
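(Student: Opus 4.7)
\medskip

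The plan is to prove the circle of implications (1) $\then$ (3) $\then$ (2) $\then$ (1).

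For (1) $\then$ (3), I would observe that for any $f\in C_c(Y)$, the support of $\psi^*(f)=f\circ\psi$ is contained in $\psi\inv(\supp f)$. By assumption $\supp f$ is compact, so properness of $\psi$ makes $\psi\inv(\supp f)$ compact, and hence $\psi^*(f)\in C_c(X)$.

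For (3) $\then$ (2), I would use that $\psi^*$ is a bounded linear map (in fact a $*$-homomorphism, hence contractive) from $C_b(Y)$ to $C_b(X)$ and that $C_c(Y)$ is uniformly dense in $C_0(Y)$ while $C_0(X)$ is uniformly closed in $C_b(X)$. Given $f\in C_0(Y)$, pick $f_n\in C_c(Y)$ with $\|f_n-f\|_\infty\to 0$; then $\psi^*(f_n)\in C_c(X)\subset C_0(X)$ converges uniformly to $\psi^*(f)$, which therefore lies in $C_0(X)$.

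For (2) $\then$ (1), let $K\subset Y$ be compact. Using local compactness of $Y$ and Urysohn's lemma, choose $f\in C_c(Y)\subset C_0(Y)$ with $0\le f\le 1$ and $f\equiv 1$ on $K$. Then $\psi^*(f)\in C_0(X)$ by hypothesis, so the set $F=\{x\in X:|\psi^*(f)(x)|\ge 1/2\}$ is compact. Since $f\circ\psi\equiv 1$ on $\psi\inv(K)$, we have $\psi\inv(K)\subset F$; and $\psi\inv(K)$ is closed because $\psi$ is continuous and $K$ is closed. Thus $\psi\inv(K)$ is a closed subset of a compact set, hence compact, proving that $\psi$ is proper.

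No step here is really a serious obstacle; the only mild subtlety is remembering that (2) $\then$ (1) requires producing a cutoff function on $Y$, which needs local compactness to invoke Urysohn, so this is where the standing hypothesis that $Y$ is locally compact Hausdorff (as opposed to merely Hausdorff) is actually used.
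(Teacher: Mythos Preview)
Your proof is correct and entirely standard. The paper does not actually give a proof of this lemma---it is stated with the preamble ``It is an easy exercise to show:'' and left at that---so there is nothing to compare your argument against; your cycle (1) $\Rightarrow$ (3) $\Rightarrow$ (2) $\Rightarrow$ (1) is exactly the expected routine verification.
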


\begin{prop}\label{proper C0(X)}
The $G$-space $X$ is proper if and only if for all $f,g\in C_0(X)$
the map $s\mapsto \alpha_s(f)g$ from $G$ to $C_0(X)$ vanishes at infinity.
\end{prop}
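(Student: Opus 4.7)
The plan is to translate the topological condition into a norm condition on the function $s \mapsto \alpha_s(f)g$ by unpacking the pointwise formula
\[
\bigl(\alpha_s(f)g\bigr)(x) = f(s\inv x)g(x),
\]
which shows that this product is supported (in the variable $x$) on $sK_f \cap K_g$, where $K_f,K_g$ denote the supports of $f,g$. In particular, $\alpha_s(f)g$ is identically zero unless $s \in ((K_f,K_g))$, and on general grounds its sup-norm is bounded by $\|f\|_\infty\|g\|_\infty$.

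For the forward direction I would first treat $f,g \in C_c(X)$. By \lemref{compact wandering}, properness of $X$ gives that $((K_f,K_g))$ is compact, so $s \mapsto \alpha_s(f)g$ has compact support in $G$, in particular it vanishes at infinity. To pass to general $f,g \in C_0(X)$, I would approximate uniformly by $f_n,g_n \in C_c(X)$ and use the isometric bound
\[
\|\alpha_s(f)g - \alpha_s(f_n)g_n\|_\infty \le \|f-f_n\|_\infty\|g\|_\infty + \|f_n\|_\infty\|g-g_n\|_\infty,
\]
which is independent of $s$; thus $s\mapsto\alpha_s(f)g$ is a uniform limit of compactly supported functions and so lies in $C_0(G,C_0(X))$.

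For the converse I would show that for every pair of compact sets $K,L \subset X$ the set $((K,L))$ is compact, and then invoke \lemref{compact wandering}. Given such $K,L$, use Urysohn's lemma to pick $f,g \in C_c(X)$ with $0 \le f,g \le 1$, $f \equiv 1$ on $K$, and $g \equiv 1$ on $L$. If $s \in ((K,L))$, pick $x \in sK \cap L$; then $f(s\inv x) = 1 = g(x)$, so $\|\alpha_s(f)g\|_\infty \ge 1$. By hypothesis the set $\{s \in G : \|\alpha_s(f)g\|_\infty \ge 1\}$ is relatively compact, and since $((K,L))$ is closed (as noted after the definition of $((U,V))$), it follows that $((K,L))$ itself is compact.

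I do not expect any serious obstacle here: the argument is essentially a bookkeeping translation between properness of the topological map $(x,s)\mapsto(x,sx)$ and the decay of the $C^*$-algebraic product $\alpha_s(f)g$. The only mild subtlety is the $C_c \to C_0$ approximation in the forward direction, which is handled by the uniform bound above.
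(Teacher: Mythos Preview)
Your proof is correct. Both directions follow the same overall strategy as the paper --- reduce to $f,g\in C_c(X)$ for the forward direction and use a Urysohn-type argument for the converse --- but the packaging differs slightly. The paper works directly with the defining map $\phi:G\times X\to X\times X$, invoking \lemref{Cc} (the characterization $\psi$ proper $\iff$ $\psi^*(C_c)\subset C_c$) to pass between properness of $\phi$ and compact support of $\phi^*(f\times g)$; you instead invoke \lemref{compact wandering} (the $((K,L))$ characterization) throughout. Your converse is arguably cleaner: you observe directly that $((K,L))$ sits inside the compact level set $\{s:\|\alpha_s(f)g\|\ge 1\}$, whereas the paper reconstructs compact support of $\phi^*(f\times g)$ in $G\times X$. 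The two routes are equivalent and the difference is cosmetic.
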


\begin{proof}
First assume that the action is proper.
Since $C_c(X)$ is dense in $C_0(X)$,
by continuity it suffices to show that for all $f,g\in C_c(X)$
the map continuous $s\mapsto \alpha_s(f)g$ from $G\to C_0(X)$ has compact support.
Define $f\times g\in C_c(X\times X)$ by
\[
f\times g(x,y)=f(x)g(y).
\]
Since the map $\phi:G\times X\to X\times X$ given by $g(s,x)=(sx,x)$ is proper,
we have $\phi^*(f\times g)\in C_c(G\times X)$,
so there exist compact sets $K\subset G$ and $L\subset X$ such that
for all $(s,x)\notin K\times L$ we have
\[
0=\phi^*(f\times g)(s,x)=f\times g(sx,x)=f(sx)g(x)=\bigl(\alpha_{s\inv}(f)g\bigr)(x).
\]
Since $s\notin K$ implies $(s,x)\notin K\times L$,
we see that
the map $s\mapsto \alpha_s(f)g$ is supported in the compact set $K\inv$.

Conversely, assume the condition regarding $\alpha_s(f)g$.
To show that the action is proper, we will show that the map $\phi$ is proper,
and by \lemref{Cc} it suffices to show that
if $h\in C_c(X\times X)$ then $\phi^*(h)\in C_c(G\times X)$.
The support of $h$ is contained in a product $M\times N$ for some compact sets $M,N\subset X$,
and we can choose $f,g\in C_c(X)$ with $f=1$ on $M$ and $g=1$ on $N$.
Then $h(f\times g)=h$,
so it suffices to show that $\phi^*(f\times g)$ has compact support.
By assumption the support $K$ of $s\mapsto \alpha_s(f)g$ is compact,
and letting $L$ be the support of $g$ we see that
for all $(s,x)$ not in the compact set $K\inv\times L$ we have
\[
\phi^*(f\times g)(s,x)=\bigl(\alpha_{s\inv}(f)g\bigr)(x)=0.
\qedhere
\]
\end{proof}

\begin{prop}\label{pointwise proper C0(X)}
The $G$-space $X$ is pointwise proper if and only if for all $f\in C_0(X)$ and $\mu\in M(X)=C_0(X)^*$ the map
\[
g(s)=\int_Xf(sx)\,d\mu(x)
\]
is in $C_0(G)$.
\end{prop}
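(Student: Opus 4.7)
The proposition is a biconditional whose easy direction uses specialization to point masses and whose harder direction uses dominated convergence.

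For $(\Leftarrow)$: Apply the hypothesis with $\mu=\delta_x$ for $x\in X$. Then $g(s)=f(sx)$, and the assumption becomes: $s\mapsto f(sx)\in C_0(G)$ for every $f\in C_0(X)$ and every $x\in X$. By \lemref{Cc} this says the map $s\mapsto sx$ from $G$ to $X$ is proper for each $x$, which is pointwise properness by \lemref{orbit proper}.

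For $(\Rightarrow)$: Assume pointwise properness and fix $f\in C_0(X)$ and $\mu\in M(X)$. Let $\alpha$ be the associated action of $G$ on $C_0(X)$, so that $g(s)=\mu(\alpha_{s\inv}(f))$. Strong continuity of $\alpha$ together with boundedness of $\mu$ immediately yields continuity of $g$. For vanishing at infinity, observe that for each $x\in X$, pointwise properness and \lemref{orbit proper} imply $s\mapsto sx$ is proper, so by \lemref{Cc} the function $s\mapsto f(sx)$ lies in $C_0(G)$; in particular $f(sx)\to 0$ as $s\to\infty$ in $G$. Since $|f(sx)|\le\|f\|_\infty$ and $|\mu|$ is a finite measure on $X$, dominated convergence yields $g(s)\to 0$ as $s\to\infty$, so $g\in C_0(G)$.

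The main subtlety is the final dominated-convergence step: for a general locally compact group $G$ one must work with nets rather than sequences, and naive net-DCT can fail. The argument is rescued by the fact that $|\mu|$ is a finite Radon measure, hence tight, so the integrand's contribution is controlled on a compact $L\subset X$; combined with the joint continuity of the action and the pointwise vanishing $f(sx)\to 0$ for each $x\in L$, this lets one push the limit past the integral.
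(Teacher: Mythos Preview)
Your reverse direction and your setup for the forward direction (continuity of $g$ via strong continuity of $\alpha$, then dominated convergence) match the paper exactly. The issue is precisely where you flag it: the dominated-convergence step. Your proposed rescue via tightness does not close the gap.

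After restricting to a compact $L\subset X$ with $|\mu|(X\setminus L)$ small, you still need
\[
\int_L |f(sx)|\,d|\mu|(x)\;\longrightarrow\;0 \quad\text{as } s\to\infty \text{ in } G,
\]
and this is the very statement you are trying to prove (for $|f|$ and $|\mu||_L$ in place of $f$ and $\mu$), so the reduction is circular. Joint continuity of $(s,x)\mapsto f(sx)$ does not upgrade the pointwise-in-$x$ vanishing to anything usable here: what you would need is that $\sup_{x\in L}|f(sx)|\to 0$, i.e.\ that $((L,\{|f|\ge\delta\}))$ is relatively compact for every $\delta>0$, and for $f\in C_c(X)$ that is exactly properness of the action, not pointwise properness. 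So for actions that are pointwise proper but not proper, the tightness route cannot work as stated.

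The paper avoids nets altogether. Arguing by contradiction, if $S=\{s:|g(s)|\ge\epsilon\}$ is closed and noncompact, one uses local compactness of $G$ to extract a \emph{sequence} $\{s_n\}\subset S$ and a compact neighborhood $V$ of $e$ with the translates $\{s_nV\}$ pairwise disjoint. For each fixed $x$, pointwise properness makes $\{s:|f(sx)|\ge\delta\}$ compact, and a compact set can meet only finitely many of the disjoint $s_nV$; hence $f(s_nx)\to 0$. Now ordinary (sequential) dominated convergence gives $g(s_n)\to 0$, contradicting $s_n\in S$. The sequence extraction is the missing idea in your argument.
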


\begin{proof}
First assume that the action is pointwise proper.
Let $f\in C_0(X)$ and $\mu\in M(X)$, and define $g$ as above.
Note that $g$ is continuous since the 
associated action $(C_0(X),\alpha)$ is strongly continuous.
Suppose that $g$ does not vanish at $\infty$,
and pick $\epsilon>0$ such that the closed set
\[
S:=\{s\in G:|g(s)|\ge \epsilon\}
\]
is not compact.
It is a routine exercise to verify that we can find
a sequence $\{s_n\}$ in $S$
and
a compact neighborhood $V$ of $e$ such that
the sets $\{s_nV\}$ are pairwise disjoint.
Then for each $x\in X$ we have $\lim_{n\to\infty}f(s_nx)=0$,
because for fixed $x$ and any $\delta>0$ 
it is an easy exercise to see that
the compact set $\{s\in G:|f(sx)|\ge \delta\}$ can only intersect finitely many of the sets $\{s_nV\}$.
Thus by the Dominated Convergence theorem $\lim_{n\to\infty}g(x_n)=0$,
contradicting $s_n\in S$ for all $n$.

The converse follows immediately by taking $\mu$ to be a Dirac measure and applying \lemref{orbit proper}.
\end{proof}

\propref{module} below is the first time we need vector-valued integration.
There are numerous references dealing with this topic.
We are interested in integrating functions $f:\Omega\to B$, where $\Omega$ is a locally compact Hausdorff space equipped with a Radon measure $\mu$ (sometimes complex, but other times positive, and then frequently infinite), and $B$ is a Banach space.
Rieffel \cite[Section~1]{integrable} handles continuous bounded functions to a $C^*$-algebra using $C^*$-valued weights.
Exel \cite[Section~2]{exelunconditional} develops a theory of \emph{unconditionally integrable} functions with values in a Banach space, 
involving convergence of the integrals over relatively compact subsets of $G$.
Williams \cite[Appendix~B.1]{danacrossed} gives an exposition of the general theory of $L^1(\Omega,B)$, that 
in some sense
unifies the treatments in
\cite[Chapter~3]{dunford},
\cite{bourbaki_integration},
\cite[Chapter~II]{fd1},
and
\cite[part~I, Section~III.1]{hille}.
However, Williams uses a positive measure throughout, and we occasionally need complex measures;
this poses no problem, since the theory of \cite{danacrossed} can be applied to the positive and negative variations of the real and imaginary parts of a complex measure.
We prefer to use \cite{danacrossed} as our reference for vector-valued integration,
mainly because it entails \emph{absolute integrability} rather than \emph{unconditional integrability}
(see the first item in the following list).
Here are the main properties of $L^1(\Omega,B)$ that we need:
\begin{itemize}
\item The map $f\mapsto \int_\Omega f\,d\mu$ from $L^1(\Omega,B)$ to $B$ is bounded and linear,
where $\|f\|_1=\int_\Omega \|f(x)\|\,d|\mu|(x)$.

\item If $f\in L^1(\Omega,B)$ and $\omega$ is a bounded linear functional on $B$,
then $\omega\circ f\in L^1(\Omega)$ and
\[
\omega\left(\int_\Omega f(x)\,d\mu(x)\right)=\int_\Omega \omega(f(x))\,d\mu(x).
\]

\item If $f\in L^1(\Omega)$ and $b\in B$ then
\[
\int_\Omega (f\otimes b)\,d\mu=\left(\int_\Omega f\,d\mu\right)b,
\]
where $(f\otimes b)(x)=f(x)b$.

\item Every continuous bounded function from $\Omega$ to $B$ is measurable,
and is also essentially-separably valued on compact sets,
and so is integrable with respect to any complex measure.
\end{itemize}
Of course, we refer to the elements of $L^1(\Omega,B)$ as the \emph{integrable} functions from $\Omega$ to $B$.

If $X$ is a $G$-space, then
$C_0(X)$ gets a Banach-module structure over $M(G)=C_0(G)^*$ by
\[
\mu*f(x)=\int_G f(sx)\,d\mu(s)\righttext{for}\mu\in M(G),f\in C_0(X),x\in X.
\]
Here we are integrating the continuous bounded function
$s\mapsto \alpha_s(f)$
with respect to the complex measure $\mu$.

The following is a special case of \propref{module noncom} below.

\begin{prop}\label{module}
The action on $X$ is pointwise proper
if and only if
for each $f$ the map $\mu\mapsto \mu*f$ is weak*-to-weakly continuous.
\end{prop}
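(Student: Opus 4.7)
The strategy is to translate the weak$^*$-to-weak continuity condition into a scalar statement about integrals, via standard duality, and then read off the conclusion from \propref{pointwise proper C0(X)}. A linear map $T\colon M(G)\to C_0(X)$ is weak$^*$-to-weakly continuous if and only if, for every $\nu\in C_0(X)^*=M(X)$, the composition $\nu\circ T$ is weak$^*$-continuous on $M(G)$. Since $M(G)=C_0(G)^*$, a linear functional on $M(G)$ is weak$^*$-continuous precisely when it has the form $\mu\mapsto\int_G g\,d\mu$ for some $g\in C_0(G)$.

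Fix $f\in C_0(X)$ and $\nu\in M(X)$, and let $T(\mu)=\mu*f$. The measures $\mu\in M(G)$ and $\nu$ are both finite complex Radon measures, and $(s,x)\mapsto f(sx)$ is bounded continuous on $G\times X$, so Fubini applies to the positive and negative variations of the real and imaginary parts, yielding
\[
\nu(\mu*f)=\int_X\int_G f(sx)\,d\mu(s)\,d\nu(x)=\int_G g_\nu(s)\,d\mu(s),
\]
where $g_\nu(s):=\int_X f(sx)\,d\nu(x)$. The function $g_\nu$ is bounded by $\|f\|_\infty|\nu|(X)$, and it is continuous because $g_\nu(s)=\nu(\alpha_{s\inv}(f))$ and the map $s\mapsto \alpha_{s\inv}(f)$ is norm-continuous from $G$ into $C_0(X)$ by strong continuity of $\alpha$; thus $g_\nu\in C_b(G)$.

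Putting the two observations together, the functional $\mu\mapsto \nu(\mu*f)$ is weak$^*$-continuous on $M(G)$ if and only if $g_\nu\in C_0(G)$, and therefore $T$ is weak$^*$-to-weakly continuous if and only if $g_\nu\in C_0(G)$ for every $\nu\in M(X)$. By \propref{pointwise proper C0(X)}, this last condition is exactly pointwise properness of the $G$-space $X$. The only non-formal step is the Fubini interchange plus the identification of weak$^*$-continuous functionals on $C_0(G)^*$ with elements of $C_0(G)$; once these standard facts are in hand, the equivalence is just a rereading of \propref{pointwise proper C0(X)}.
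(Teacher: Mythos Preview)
Your proof is correct and follows essentially the same approach as the paper. The paper actually deduces \propref{module} as the commutative special case of \propref{module noncom}, whose proof unwinds to exactly your argument: compute $\nu(\mu*f)=\int_G g_\nu\,d\mu$ via the interchange of integrals (the paper uses the properties of the vector-valued integral in place of Fubini), and then invoke the fact that a bounded continuous function $g_\nu$ on $G$ lies in $C_0(G)$ if and only if $\mu\mapsto\int g_\nu\,d\mu$ is weak*-continuous on $M(G)$ (the paper records this as \lemref{in C0}, tested with nets, while you phrase it via the identification of weak*-continuous functionals on $C_0(G)^*$ with $C_0(G)$).
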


\section{Properness conditions for actions on $C^*$-algebras}\label{c star general}

Propositions~\ref{proper C0(X)} and \ref{pointwise proper C0(X)}
motivate the following:

\begin{defn}\label{s-proper}
An action $(A,\alpha)$ is \emph{s-proper} if for all $a,b\in A$ the map $s\mapsto \alpha_s(a)b$ from $G$ to $A$ vanishes at infinity.
\end{defn}

Taking adjoints, we see that the above map could equally well be replaced by $s\mapsto a\alpha_s(b)$.

\begin{defn}\label{w-proper}
An action $(A,\alpha)$ is \emph{w-proper} if for all $a\in A$ and all $\omega\in A^*$ the map
\[
g(s)=\omega\bigl(\alpha_s(a)\bigr)
\]
is in $C_0(G)$.
\end{defn}

We use the admittedly nondescriptive terminology s-proper and w-proper to avoid confusion with the myriad other uses of the word ``proper'' for actions on $C^*$-algebras.

\begin{rem}\label{locally proper C0(X)}
It is 
almost obvious
that 
a
$G$-space $X$
is locally proper if and only if
there is a family of $\alpha$-invariant closed ideals of $C_0(X)$
that densely span $C_0(X)$ and
on each of which $\alpha$ has the property in \propref{proper C0(X)}.
In fact, we will use this in the proof of \propref{locally regular}.
This could be generalized in various ways to actions on arbitrary $C^*$-algebras,
but since
we have no applications of this 
we will not 
pursue it here.
\end{rem}

Propositions~\ref{proper C0(X)} and \ref{pointwise proper C0(X)}
can be rephrased as follows:

\begin{cor}\label{proper action}
A $G$-space $X$ is proper if and only if the associated action $(C_0(X),\alpha)$ is s-proper,
and is pointwise proper if and only if $\alpha$ is w-proper.
\end{cor}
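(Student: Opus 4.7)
The plan is that this corollary should follow almost immediately from Propositions~\ref{proper C0(X)} and \ref{pointwise proper C0(X)} once we identify the definitions of s- and w-properness, specialized to $A=C_0(X)$, with the conditions appearing in those propositions. So the proof is really a dictionary translation, and I would present it as such rather than redo any of the dynamics work.

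For the first equivalence, I would note that s-properness of $(C_0(X),\alpha)$ is, by \defnref{s-proper} applied with $a=f$ and $b=g$ in $C_0(X)$, the statement that for all $f,g\in C_0(X)$ the map $s\mapsto \alpha_s(f)g$ belongs to $C_0(G,C_0(X))$. This is verbatim the condition that characterizes properness of $X$ in \propref{proper C0(X)}. Hence $X$ is proper iff $(C_0(X),\alpha)$ is s-proper.

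For the second equivalence, I would apply the Riesz representation theorem to identify $C_0(X)^*$ with the space $M(X)$ of complex Radon measures, writing $\omega(f)=\int_X f\,d\mu$ for $\omega\in C_0(X)^*$ and the corresponding $\mu\in M(X)$. With this identification,
\[
\omega(\alpha_s(f))=\int_X \alpha_s(f)(x)\,d\mu(x)=\int_X f(s\inv x)\,d\mu(x).
\]
Thus w-properness of $(C_0(X),\alpha)$ asserts that for all $f\in C_0(X)$ and $\mu\in M(X)$ the function $s\mapsto \int_X f(s\inv x)\,d\mu(x)$ lies in $C_0(G)$. The condition appearing in \propref{pointwise proper C0(X)}, by contrast, is that $s\mapsto \int_X f(sx)\,d\mu(x)$ lies in $C_0(G)$. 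These two conditions are equivalent, because $C_0(G)$ is invariant under the homeomorphism $s\mapsto s\inv$ of $G$; so w-properness of $\alpha$ coincides with the condition characterizing pointwise properness in \propref{pointwise proper C0(X)}, and the second equivalence follows.

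The only potential obstacle is bookkeeping around the inversion $s\mapsto s\inv$, but this is purely cosmetic and can be dispatched in a single sentence. No measure-theoretic or dynamical input beyond the two cited propositions and the Riesz identification $C_0(X)^*=M(X)$ is required.
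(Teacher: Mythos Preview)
Your proposal is correct and matches the paper's approach: the paper presents this corollary as a direct rephrasing of Propositions~\ref{proper C0(X)} and~\ref{pointwise proper C0(X)} with no further argument given. Your write-up simply makes explicit the Riesz identification $C_0(X)^*\cong M(X)$ and the harmless inversion $s\mapsto s^{-1}$ that the paper leaves to the reader.
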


\begin{rem}\label{s vs w}
If an action $(A,\alpha)$ is s-proper then it is w-proper, since by the Cohen-Hewitt factorization theorem every functional in $A^*$ can be expressed in the form $\omega\cdot a$, where
\[
\omega\cdot a(b)=\omega(ab)\righttext{for}\omega\in A^*,a,b\in A.
\]
On the other hand, \exref{locally proper not proper} implies that $\alpha$ can be w-proper but not s-proper.
\end{rem}

If $(A,\alpha)$ is an action then $A$ gets a Banach module structure over $M(G)$ by
\[
\mu*a=\int_G \alpha_s(a)\,d\mu(s)\righttext{for}\mu\in M(G),a\in A.
\]

\propref{module} is the commutative version of the following:

\begin{prop}\label{module noncom}
An action $(A,\alpha)$ is w-proper if and only if for each $a\in A$
the map $\mu\mapsto \mu*a$ is weak*-to-weakly continuous.
\end{prop}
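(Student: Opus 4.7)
The plan is to prove both directions by reducing them to the single identity
\[
\omega(\mu * a) \;=\; \int_G \omega(\alpha_s(a))\,d\mu(s) \;=\; \mu(g_{a,\omega}),
\]
where $g_{a,\omega}(s) := \omega(\alpha_s(a))$. This identity follows from pulling the bounded linear functional $\omega$ inside the Bochner-type integral defining $\mu*a$ (the second bullet in the list of properties of $L^1(\Omega,B)$), together with the fact that $s\mapsto\alpha_s(a)$ is continuous and bounded and hence integrable against the complex measure $\mu$. Note that $g_{a,\omega}$ is automatically continuous, since the action is strongly continuous; the whole content of w-properness is that $g_{a,\omega}$ \emph{vanishes at infinity}, so that it lies in $C_0(G)$ and can be paired with weak* convergent nets in $M(G) = C_0(G)^*$.

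For the forward implication, assume w-properness. Let $\mu_i\to\mu$ in the weak* topology of $M(G)$, and fix $a\in A$ and $\omega\in A^*$. By the identity above,
\[
\omega(\mu_i*a) = \mu_i(g_{a,\omega}) \quad\text{and}\quad \omega(\mu*a) = \mu(g_{a,\omega}).
\]
Since $g_{a,\omega}\in C_0(G)$ by w-properness, weak* convergence $\mu_i\to\mu$ gives $\mu_i(g_{a,\omega})\to\mu(g_{a,\omega})$. As $\omega$ was arbitrary, $\mu_i*a\to\mu*a$ weakly in $A$.

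For the converse, assume $\mu\mapsto\mu*a$ is weak*-to-weakly continuous for every $a\in A$. Fix $a\in A$ and $\omega\in A^*$; I want $g_{a,\omega}\in C_0(G)$. The key device is the Dirac measures: $\delta_s*a = \alpha_s(a)$, so $g_{a,\omega}(s) = \omega(\delta_s*a)$. Suppose, for contradiction, that $g_{a,\omega}$ does not vanish at infinity. Then there is $\epsilon>0$ and a net $\{s_i\}\subset G$ with $|g_{a,\omega}(s_i)|\ge\epsilon$ such that $\{s_i\}$ eventually leaves every compact subset of $G$. For any $f\in C_0(G)$, the level set $\{s:|f(s)|\ge\eta\}$ is compact for every $\eta>0$, so $f(s_i)\to 0$; this says $\delta_{s_i}\to 0$ in the weak* topology of $M(G)$. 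By hypothesis $\delta_{s_i}*a\to 0$ weakly, hence $g_{a,\omega}(s_i)=\omega(\alpha_{s_i}(a))\to 0$, contradicting $|g_{a,\omega}(s_i)|\ge\epsilon$.

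The argument is essentially a pairing trick, and there is no serious obstacle beyond pinning down the routine fact that a net escaping every compact set in a locally compact Hausdorff space produces a weak*-null net of Dirac measures in $C_0(G)^*$. The commutative case \propref{module} then drops out as a corollary by specializing to $A=C_0(X)$ and noting $A^* = M(X)$, so the same proof applies verbatim there.
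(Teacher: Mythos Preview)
Your proof is correct and follows essentially the same approach as the paper's. The forward direction is identical; for the converse, the paper invokes a separate lemma (\lemref{in C0}) characterizing $C_0(G)$ inside $C_b(G)$ via weak*-null nets in $M(G)$, whereas you give the direct argument with Dirac measures --- which is precisely how one would prove that lemma anyway, so the difference is only organizational.
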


\begin{proof}
First assume that $\alpha$ is w-proper,
and let $a\in A$.
Let $\mu_i\to 0$ weak* in $M(G)$,
and let $\omega\in A^*$.
Then
\[
\omega(\mu_i*a)=\omega\left(\int_G \alpha_s(a)\,d\mu_i(s)\right)=\int \omega(\alpha_s(a))\,d\mu_i(s)
\to 0,
\]
because the map $s\mapsto \omega(\alpha_s(a))$ is in $C_0(G)$.

Conversely, assume the weak*-weak continuity,
and let $a\in A$ and $\omega\in A^*$.
If $\mu_i\to 0$ weak* in $M(G)$, then
\[
\int_G \omega(\alpha_s(a))\,d\mu_i(s)
=\omega(\mu_i*a)
\to 0
\]
by continuity.
By the well-known \lemref{in C0} below, the element $s\mapsto \omega(\alpha_s(a))$ of $C_b(G)$ lies in $C_0(G)$.
\end{proof}

In the above proof we appealed to the following well-known fact:

\begin{lem}\label{in C0}
Let $f\in C_b(G)$.
Then $f\in C_0(G)$ if and only if for every net $\{\mu_i\}$ in $M(G)$ converging weak* to 0 we have
\[
\int f\,d\mu_i\to 0.
\]
\end{lem}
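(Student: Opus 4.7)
The forward direction is essentially definitional: if $f \in C_0(G)$, then by the definition of the weak* topology on $M(G) = C_0(G)^*$, any net $\mu_i \to 0$ weak* satisfies $\int f\, d\mu_i = \mu_i(f) \to 0$. I would simply state this as the easy half.

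For the converse, my plan is to argue by contrapositive using point masses. Suppose $f \in C_b(G)$ but $f \notin C_0(G)$. Then there exists $\epsilon > 0$ such that the set $S := \{s \in G : |f(s)| \geq \epsilon\}$ fails to be relatively compact, so for every compact $K \subset G$ we can choose $s_K \in S \minus K$. Indexing by the directed set of compact subsets of $G$ ordered by inclusion, this gives a net $\{s_K\}$ in $G$ that eventually escapes every compact set, together with $|f(s_K)| \geq \epsilon$ for all $K$.

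The key observation is then that the associated net of Dirac measures $\{\delta_{s_K}\}$ converges to $0$ weak* in $M(G)$. Indeed, for any $g \in C_0(G)$ and $\eta > 0$, the set $K_\eta := \{s : |g(s)| \geq \eta\}$ is compact, and for $K \supseteq K_\eta$ we have $s_K \notin K_\eta$, whence $|g(s_K)| < \eta$; thus $\int g\, d\delta_{s_K} = g(s_K) \to 0$. But
\[
\left|\int f\, d\delta_{s_K}\right| = |f(s_K)| \geq \epsilon
\]
for all $K$, contradicting the hypothesis that $\int f\, d\mu_i \to 0$ along every weak*-null net.

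There is essentially no technical obstacle here; the only point requiring mild care is the construction of the indexing. One could alternatively invoke sequences if $G$ is $\sigma$-compact, but since the lemma is stated for general locally compact $G$ and for arbitrary nets, the net of Dirac measures indexed by compact subsets of $G$ is the cleanest approach.
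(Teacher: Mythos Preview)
Your proof is correct. The paper itself does not prove this lemma at all --- it merely labels it a ``well-known fact'' and states it without argument --- so there is nothing to compare against. Your Dirac-measure net indexed by compact subsets is the standard (and cleanest) way to supply the missing details.
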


s-properness and w-properness are both preserved by morphisms:

\begin{prop}\label{morphism}
Let $\phi:A\to M(B)$ be a nondegenerate homomorphism that is equivariant for actions $\alpha$ and $\beta$, respectively.
If $\alpha$ is 
s-proper
or w-proper,
then $\beta$ has the same property.
\end{prop}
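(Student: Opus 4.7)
The plan is to exploit Cohen--Hewitt factorization together with the equivariance identity $\beta_s\circ\phi=\phi\circ\alpha_s$. Nondegeneracy of $\phi$ makes $B$ a nondegenerate left and right Banach $A$-module via $a\cdot b=\phi(a)b$ and $b\cdot a=b\phi(a)$, so Cohen--Hewitt provides, for every $b\in B$, factorizations $b=\phi(a)c$ \emph{and} $b=c\phi(a)$ with $a\in A$, $c\in B$. This is the only reduction available, since $\phi(A)$ sits inside $M(B)$ rather than $B$.

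For the s-proper case I would factor $b_1=c_1\phi(a_1)$ and $b_2=\phi(a_2)c_2$, whereupon equivariance together with multiplicativity of $\phi$ collapses the middle to give
\[
\beta_s(b_1)b_2=\beta_s(c_1)\,\phi\bigl(\alpha_s(a_1)a_2\bigr)\,c_2,
\]
hence $\|\beta_s(b_1)b_2\|\le\|c_1\|\,\|\alpha_s(a_1)a_2\|\,\|c_2\|$. s-properness of $\alpha$ forces the right side to vanish at infinity, and norm continuity in $s$ is immediate from strong continuity of $\beta$.

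The w-proper case is the genuinely delicate one. The naive attempt to pull $\omega\in B^*$ back to a fixed functional on $A$ fails: writing $b=\phi(a)c$ gives $\omega(\beta_s(b))=\omega(\phi(\alpha_s(a))\beta_s(c))$, and the factor $\beta_s(c)$ moves with $s$, so no $s$-independent $\eta\in A^*$ realizes this as $\eta(\alpha_s(a))$. My plan is to route through a representation: by the Jordan decomposition in $B^*$ reduce to $\omega$ positive, realize it by GNS as $\omega(\cdot)=\langle\pi(\cdot)\xi,\xi\rangle$ on some nondegenerate $\pi\colon B\to \BB(H)$, extend to $\tilde\pi\colon M(B)\to \BB(H)$ strictly, and form the $*$-representation $\rho:=\tilde\pi\circ\phi\colon A\to \BB(H)$. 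Then, factoring $b=\phi(a)c$,
\[
\omega(\beta_s(b))=\langle\rho(\alpha_s(a))\pi(\beta_s(c))\xi,\xi\rangle=\langle\pi(\beta_s(c))\xi,\rho(\alpha_s(a^*))\xi\rangle,
\]
so Cauchy--Schwarz bounds this by $\|c\|\,\|\xi\|\cdot\|\rho(\alpha_s(a^*))\xi\|$. The $C^*$-identity
\[
\|\rho(\alpha_s(a^*))\xi\|^2=\langle\rho(\alpha_s(aa^*))\xi,\xi\rangle=\omega_0(\alpha_s(aa^*)),
\]
with the now $s$-independent $\omega_0(x):=\langle\rho(x)\xi,\xi\rangle\in A^*$, finally puts us in the hypothesis of w-properness of $\alpha$ and so drives the bound to zero.

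The main obstacle is precisely this decoupling: after factoring $b=\phi(a)c$ one must stop the moving factor $\beta_s(c)$ from polluting the $\alpha_s$-dependence. The GNS plus Cauchy--Schwarz device does exactly that, diverting $\beta_s(c)$ into a uniformly bounded norm while channeling the $\alpha_s$-dependence through a single fixed functional on $A$; everything else is routine, including the linearity step passing from positive $\omega$ back to a general element of $B^*$.
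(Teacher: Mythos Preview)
Your proof is correct and follows essentially the same strategy as the paper: Cohen--Hewitt factorization plus equivariance for the s-proper case, and for the w-proper case a reduction to positive $\omega$ followed by Cauchy--Schwarz to separate the moving factor $\beta_s(c)$ from the $\alpha_s$-dependence. The only difference is cosmetic: the paper applies the Cauchy--Schwarz inequality for positive functionals on a $C^*$-algebra directly, obtaining
\[
\bigl|\omega\bigl(\phi(\alpha_s(a^*))\beta_s(c)\bigr)\bigr|^2\le \omega\!\circ\!\phi\bigl(\alpha_s(a^*a)\bigr)\,\omega\bigl(\beta_s(c^*c)\bigr),
\]
which immediately exhibits the fixed functional $\omega\circ\phi\in A^*$; your GNS detour recovers exactly this inequality (your $\omega_0$ is $\omega\circ\phi$), so you could shorten the argument by invoking the functional-analytic Cauchy--Schwarz directly and skipping the representation.
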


\begin{proof}
First assume that $\alpha$ is s-proper.
Let $c,d\in B$.
By the Cohen-Hewitt Factorization theorem, $c=c'\phi(a)$ and $d=\phi(b)d'$ for some $a,b\in A$ and $c',d'\in B$.
Then
\begin{align*}
\beta_s(c)d
&=\beta_s(c'\phi(a))\phi(b)d'
\\&=\beta_s(c')\phi\bigl(\alpha_s(a)b\bigr)d',
\end{align*}
which vanishes at infinity because 
$s\mapsto \alpha_s(a)b$ does and $s\mapsto \beta_s(c')$ is bounded.

Now assume that $\alpha$ is w-proper.
Let $b\in B$ and $\omega\in B^*$.
We must show that the function $s\mapsto \omega\circ \beta_s(b)$ vanishes at $\infty$, and it suffices to do this for $\omega$ positive.
By the Cohen-Hewitt Factorization theorem we can assume that $b=\phi(a^*)c$ with $a\in A$ and $c\in B$.
By the Cauchy-Schwarz inequality for positive functionals on $C^*$-algebras, we have
\begin{align*}
\bigl|\omega\circ \beta_s(b)\bigr|^2
&=\Bigl|\omega\bigl(\phi(\alpha_s(a^*))\beta_s(c)\bigr)\Bigr|^2
\\&\le \omega\circ\phi(\alpha_s(a^*a))\omega(\beta_s(c^*c)),
\end{align*}
which vanishes at $\infty$ since $s\mapsto \omega\circ\phi(\alpha_s(a^*a))$ does and $s\mapsto \omega(\beta_s(c^*c))$ is bounded.
\end{proof}

In \secref{coactions} we will discuss properness for coactions, the dualization of actions.
Here we record an easy corollary of \propref{morphism} that involves coactions,
because it gives a rich supply of s-proper actions.
For now we just need to recall that if $(A,\delta)$ is a coaction of $G$,
with crossed product $C^*$-algebra $A\rtimes_\delta G$,
then there is a pair of nondegenerate homomorphisms
\[
\xymatrix{
A \ar[r]^-{j_A}
&M(A\rtimes_\delta G)
&C_0(G) \ar[l]_-{j_G}
}
\]
such that $(j_A,j_G)$ is a universal covariant homomorphism.
The \emph{dual action} $\what\delta$ of $G$ on $A\rtimes_\delta G$ is characterized by
\begin{align*}
\what\delta_s\circ j_A&=j_A
\\
\what\delta_s\circ j_G&=j_G\circ\rt_s,
\end{align*}
where $\rt$ is the action of $G$ on $C_0(G)$ by right translation.

\begin{cor}\label{dual action}
Every dual action is s-proper.
\end{cor}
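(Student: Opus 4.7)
The plan is to deduce s-properness of every dual action from the already-established machinery, specifically by transporting s-properness through the canonical equivariant map $j_G$ from $C_0(G)$.

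First, I would observe that the right translation action of $G$ on itself is proper. Indeed, the map $\phi(s,t)=(t,ts)$ from $G\times G$ to $G\times G$ is a homeomorphism with inverse $(t,u)\mapsto (t,t\inv u)$, so it is trivially proper. Consequently \corref{proper action} gives that the associated action $(C_0(G),\rt)$ is s-proper.

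Next, I would note that the canonical homomorphism $j_G:C_0(G)\to M(A\rtimes_\delta G)$ is nondegenerate and, by the defining relation $\what\delta_s\circ j_G=j_G\circ\rt_s$, is equivariant from $(C_0(G),\rt)$ to $(A\rtimes_\delta G,\what\delta)$. \propref{morphism} then applies and delivers the conclusion that $\what\delta$ is s-proper.

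There is no real obstacle: the content of the corollary is essentially the observation that the covariance relation for $j_G$ is exactly equivariance for right translation, together with the (obvious) properness of right translation of $G$ on itself. One mild point worth flagging is that \propref{morphism} requires $j_G$ to be nondegenerate as a homomorphism into $M(A\rtimes_\delta G)$, which is part of the standard package for crossed products by coactions recalled just before the statement of \corref{dual action}, so no extra verification is needed.
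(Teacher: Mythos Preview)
Your proof is correct and follows essentially the same approach as the paper: both use that $j_G$ is a nondegenerate $\rt$--$\what\delta$ equivariant homomorphism and invoke \propref{morphism}, with s-properness of $\rt$ as input. You simply supply more detail on why $\rt$ is s-proper, whereas the paper takes this for granted.
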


\begin{proof}
If $\delta$ is a coaction of $G$ on $A$, then the canonical nondegenerate homomorphism $j_G:C_0(G)\to M(A\rtimes_\delta G)$ is $\rt-\what\delta$ equivariant.
Thus $\what\delta$ is s-proper since $\rt$ is.
\end{proof}

\cite[Corollary~5.9]{brogue} says that
if an action of a discrete group $G$ on a compact Hausdorff space $X$
is a-T-menable in the sense of \cite[Definition~5.5]{brogue},
then every covariant representation of the associated action $(C(X),\alpha)$ is weakly contained in a representation $(\pi,U)$, on a Hilbert space $H$, such that for all $\xi,\eta$ in a dense subspace of $H$ the function $s\mapsto (U_s\xi,\eta)$ is in $c_0(G)$.
The following proposition shows that w-proper actions on arbitrary $C^*$-algebras have a quite similar 
property:

\begin{prop}\label{c0 coef}
Let $(A,\alpha)$ be a w-proper action,
let $\pi$ be a representation of $A$ on a Hilbert space $H$,
and for each $s\in G$ suppose we have a unitary operator $U_s$ on $H$ such that $\ad U_s\circ\pi=\pi\circ\alpha_s$.
Then 
for all $\xi,\eta\in H$ the function
\[
s\mapsto \<U_s\xi,\eta\>
\]
vanishes at infinity.
\end{prop}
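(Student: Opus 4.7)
The plan is to reduce the statement to an instance of the w-properness condition (\defnref{w-proper}) by applying the Cohen--Hewitt factorization theorem to $\xi$ and then using the Cauchy--Schwarz inequality, in the same spirit as the second half of the proof of \propref{morphism}.

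I would assume throughout that $\pi$ is nondegenerate, which is the only case in which the conclusion can reasonably hold: on $\overline{\pi(A)H}^\perp$ the covariance relation places no constraint whatsoever on $U_s$, whereas both $H_0:=\overline{\pi(A)H}$ and its orthogonal complement are invariant under each $U_s$. Indeed, from $U_s\pi(a)U_s^*=\pi(\alpha_s(a))$ one derives the two intertwinings $U_s\pi(a)=\pi(\alpha_s(a))U_s$ and $U_s^*\pi(a)=\pi(\alpha_{s^{-1}}(a))U_s^*$, and these immediately give the invariance of $H_0$ and of $H_0^\perp$, so that cross terms $\<U_s\xi,\eta\>$ with $\xi,\eta$ in opposite summands vanish identically.

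With $\pi$ nondegenerate, use Cohen--Hewitt to factor $\xi=\pi(a)\xi_0$ with $a\in A$ and $\xi_0\in H$. The first intertwining above yields
\[
\<U_s\xi,\eta\>=\<\pi(\alpha_s(a))U_s\xi_0,\eta\>=\<U_s\xi_0,\pi(\alpha_s(a^*))\eta\>,
\]
and Cauchy--Schwarz together with the unitarity of $U_s$ gives
\[
|\<U_s\xi,\eta\>|^2\le \|\xi_0\|^2\cdot\|\pi(\alpha_s(a^*))\eta\|^2
=\|\xi_0\|^2\cdot\<\pi(\alpha_s(aa^*))\eta,\eta\>.
\]
Setting $\omega(x):=\<\pi(x)\eta,\eta\>$ defines a bounded linear functional $\omega\in A^*$, and the right-hand side becomes $\|\xi_0\|^2\,\omega(\alpha_s(aa^*))$, which lies in $C_0(G)$ by w-properness of $(A,\alpha)$. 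Hence $\<U_s\xi,\eta\>\to 0$ as $s\to\infty$.

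The only real obstacle is bookkeeping: ensuring that after moving $U_s$ past $\pi(a)$ via the covariance relation and applying Cauchy--Schwarz, the resulting upper bound is literally an instance of $s\mapsto\omega(\alpha_s(b))$ for a single $\omega\in A^*$ and $b\in A$, so that \defnref{w-proper} applies directly with no further manipulation.
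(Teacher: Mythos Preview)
Your proof is correct and follows essentially the same approach as the paper: assume nondegeneracy, factor $\xi=\pi(a)\xi_0$ via Cohen--Hewitt, use the covariance relation to move $U_s$ past $\pi(a)$, apply Cauchy--Schwarz, and recognize the bound as $\omega(\alpha_s(aa^*))$ for the vector functional $\omega(\cdot)=\<\pi(\cdot)\eta,\eta\>$. Your discussion of why the nondegenerate case is the only one where the conclusion can hold is a nice addition that the paper leaves implicit.
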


\begin{proof}
We can assume that $\pi$ is nondegenerate.
Then we can factor $\xi=\pi(a)\xi'$ for some $a\in A,\xi'\in H$, and
we have
\begin{align*}
|\<U_s\pi(a)\xi',\eta\>|
&=|\<U_s\xi',\pi(\alpha_s(a^*))\eta\>|
\\&\le \|\xi'\|\<\pi(\alpha_s(aa^*)\eta,\eta\>^{1/2},
\end{align*}
so we can appeal to w-properness with $\omega\in A^*$ defined by
\[
\omega(b)=\<\pi(b)\eta,\eta\>.
\qedhere
\]
\end{proof}

\begin{rem}
Note that in the above proposition we do not require $U$ to be a homomorphism; it could be a projective representation.
\end{rem}

\begin{rem}
Thus it would be interesting to study the relation between a-T-menable actions in the sense of \cite{brogue} and pointwise proper actions.
As it stands, the connection would be subtle, because an infinite discrete group cannot act pointwise properly on a compact space.
\end{rem}

\subsection*{Action on the compacts}

The following gives a strengthening of a special case of 
\propref{c0 coef}:
\begin{prop}\label{c0 coef 2}
Let $H$ be a Hilbert space, and let $\alpha$ be an action of $G$ on $\KK(H)$.
For each $s\in G$ choose a unitary operator $U_s$ such that $\alpha_s=\ad U_s$.
The following are equivalent:
\begin{enumerate}
\item $\alpha$ is s-proper;\label{one}
\item $\alpha$ is w-proper;\label{two}
\item $s\mapsto \<U_s\xi,\xi\>$ 
vanishes at infinity
for all $\xi\in H$.\label{three}
\item $s\mapsto \<U_s\xi,\eta\>$ 
vanishes at infinity
for all $\xi,\eta\in H$.\label{four}
\end{enumerate}
\end{prop}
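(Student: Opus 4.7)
The plan is to prove the cycle $(\ref{one})\Rightarrow(\ref{two})\Rightarrow(\ref{four})\Rightarrow(\ref{three})\Rightarrow(\ref{four})\Rightarrow(\ref{one})$, where the two easy implications $(\ref{one})\Rightarrow(\ref{two})$ and $(\ref{four})\Rightarrow(\ref{three})$ are free (the former by \remref{s vs w}, the latter by taking $\eta=\xi$), and the remaining three implications are the real content.

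For $(\ref{two})\Rightarrow(\ref{four})$, I would simply apply \propref{c0 coef} to the identity representation $\pi\colon\KK(H)\hookrightarrow B(H)$: the hypothesis $\ad U_s\circ\pi=\pi\circ\alpha_s$ is the very definition of $\alpha_s$, and the conclusion of \propref{c0 coef} is exactly $(\ref{four})$. For $(\ref{three})\Rightarrow(\ref{four})$ I would use the polarization identity
\[
4\<U_s\xi,\eta\>=\sum_{k=0}^3 i^{-k}\<U_s(\eta+i^k\xi),\eta+i^k\xi\>,
\]
so that $s\mapsto\<U_s\xi,\eta\>$ is a finite linear combination of diagonal coefficients, each of which vanishes at infinity by $(\ref{three})$.

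The main part is $(\ref{four})\Rightarrow(\ref{one})$. Here I would exploit the fact that finite-rank operators are norm-dense in $\KK(H)$, so it suffices, by a standard $\epsilon/3$ argument and the bound $\|\alpha_s(a)b-\alpha_s(a')b'\|\le\|a-a'\|\|b\|+\|a'\|\|b-b'\|$ (with $\|\alpha_s(a')\|=\|a'\|$), to verify s-properness on a pair of rank-one operators $a=|\xi_1\>\<\xi_2|$ and $b=|\eta_1\>\<\eta_2|$. A direct calculation gives
\[
\alpha_s(a)b=U_s a U_s^* b=\<\eta_1,U_s\xi_2\>\,|U_s\xi_1\>\<\eta_2|,
\]
whose operator norm equals $|\<\eta_1,U_s\xi_2\>|\cdot\|\xi_1\|\cdot\|\eta_2\|$, and this vanishes at infinity by $(\ref{four})$. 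Continuity of $s\mapsto\alpha_s(a)b$ is automatic from strong continuity of $\alpha$, so the map lies in $C_0(G,\KK(H))$.

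The only step where some care is needed is the density/approximation argument in $(\ref{four})\Rightarrow(\ref{one})$: one must check that a uniform limit of $C_0$-functions is $C_0$ (standard) and that finite-rank products approximate $\alpha_s(a)b$ uniformly in $s$ (which follows from the above norm bound using $\|\alpha_s\|=1$). Everything else is essentially bookkeeping against previously established results.
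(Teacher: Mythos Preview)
Your proposal is correct and follows essentially the same route as the paper. The paper proves the chain $(\ref{one})\Rightarrow(\ref{two})\Rightarrow(\ref{three})\Rightarrow(\ref{four})\Rightarrow(\ref{one})$, invoking \remref{s vs w}, \propref{c0 coef}, polarization, and then the rank-one computation (written there as $E(\xi,\eta)\alpha_s(E(\gamma,\kappa))=\<U_s\gamma,\eta\>E(\xi,\kappa)U_s^*$, i.e., the paper computes $a\alpha_s(b)$ rather than $\alpha_s(a)b$, which is equivalent); your only variation is that you pass from $(\ref{two})$ directly to $(\ref{four})$ via \propref{c0 coef}---which indeed already yields all coefficients---and then handle $(\ref{three})$ separately via the trivial $(\ref{four})\Rightarrow(\ref{three})$ together with polarization for $(\ref{three})\Rightarrow(\ref{four})$.
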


\begin{proof}
We know \eqref{one} $\then$ \eqref{two} $\then$ \eqref{three} by
\remref{s vs w} and \propref{c0 coef},
and \eqref{three} $\then$ \eqref{four} by polarization.

Assume \eqref{four}.
Let $E(\xi,\eta)$ be the rank-1 operator given by $\zeta\mapsto \<\zeta,\eta\>\xi$.
For $\xi,\eta,
\gamma,\kappa
\in H$,
A routine computation shows
\begin{align*}
E(\xi,\eta)\alpha_s(E(\gamma,\kappa))
&=\<U_s\gamma,\eta\>E(\xi,\kappa)U_s^*,
\end{align*}
so
\[
\bigl\|E(\xi,\eta)\alpha_s(E(\gamma,\kappa))\bigr\|
\le \bigl|\<U_s\gamma,\eta\>\bigr|\|E(\xi,\kappa)\|,
\]
which vanishes at infinity.
Thus $s\mapsto a\alpha_s(b)$ is in $C_0(G,\KK(H))$ whenever $a$ and $b$ are rank-1,
and by linearity and density it follows that $\alpha$ ia s-proper.
\end{proof}

In \propref{c0 coef 2}, when $U$ can be chosen to be a representation of $G$,
we have the following:

\begin{cor}\label{cyclic}
Let $U$ be a representation of $G$ on a Hilbert space $H$, and 
let $\alpha=\ad U$ be the associated action of $G$ on $\KK(H)$.
Suppose that $\xi$ is a cyclic vector for the representation $U$.
If $s\mapsto \<U_s\xi,\xi\>$ vanishes at infinity, then $\alpha$ is s-proper.
\end{cor}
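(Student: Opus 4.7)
My plan is to reduce to condition~(3) of \propref{c0 coef 2}, namely that $s\mapsto \<U_s\eta,\eta\>$ vanishes at infinity for every $\eta\in H$, and then to bootstrap from the single cyclic vector $\xi$ to all of $H$ using density and the translation-invariance of $C_0(G)$.

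First I would observe that for any $t,r\in G$, the coefficient function
\[
s\mapsto \<U_s U_t\xi,U_r\xi\>=\<U_{r\inv s t}\xi,\xi\>
\]
is just a left-and-right translate of the given function $s\mapsto \<U_s\xi,\xi\>\in C_0(G)$, hence lies in $C_0(G)$ as well, since $C_0(G)$ is invariant under left and right translation. By bilinearity it follows that for any finite linear combinations $\eta=\sum_i c_i U_{t_i}\xi$ and $\eta'=\sum_j d_j U_{r_j}\xi$, the function $s\mapsto \<U_s\eta,\eta'\>$ is a finite sum of translates of $s\mapsto \<U_s\xi,\xi\>$, and thus belongs to $C_0(G)$.

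Next, since $\xi$ is cyclic for $U$, the linear span of $\{U_t\xi:t\in G\}$ is dense in $H$. Given an arbitrary $\eta\in H$, choose a sequence $\{\eta_n\}$ of such finite linear combinations with $\eta_n\to\eta$ in norm. Writing
\[
\<U_s\eta,\eta\>-\<U_s\eta_n,\eta_n\>=\<U_s(\eta-\eta_n),\eta\>+\<U_s\eta_n,\eta-\eta_n\>,
\]
and using $\|U_s\|=1$, we get the uniform estimate
\[
\bigl|\<U_s\eta,\eta\>-\<U_s\eta_n,\eta_n\>\bigr|\le \|\eta-\eta_n\|\bigl(\|\eta\|+\|\eta_n\|\bigr)\to 0.
\]
Thus $s\mapsto \<U_s\eta,\eta\>$ is the uniform limit of functions in $C_0(G)$, hence itself lies in $C_0(G)$.

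Since this holds for every $\eta\in H$, condition~(3) of \propref{c0 coef 2} is satisfied, and we conclude that $\alpha=\ad U$ is s-proper. The argument is essentially routine once one spots the right reduction; there is no genuine obstacle, as the key computation is just the identity $\<U_s U_t\xi,U_r\xi\>=\<U_{r\inv st}\xi,\xi\>$, which depends crucially on $U$ being an honest representation (not merely a family of implementing unitaries), explaining why this strengthening of \propref{c0 coef 2} is formulated under that hypothesis.
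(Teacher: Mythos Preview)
Your proof is correct and follows essentially the same approach as the paper: both argue that translates of the given coefficient function lie in $C_0(G)$, extend by linearity to the dense span of $\{U_t\xi:t\in G\}$, pass to all of $H$ by a uniform-limit argument, and then invoke \propref{c0 coef 2}. The only cosmetic difference is that the paper works with two vectors $\eta,\kappa$ and appeals to condition~(4), whereas you use the diagonal case and condition~(3); this is immaterial since (3) and (4) are equivalent by polarization.
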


\begin{proof}
As in \cite[Remark~2.7]{brogue}, it is easy to see that
for all $\eta,\kappa$ in the dense subspace
of $H$
spanned by $\{U_s\xi:s\in G\}$
the function $s\mapsto \<U_s\eta,\kappa\>$ vanishes at infinity.
Then for all $\eta,\kappa\in H$ we can find sequences $\{\eta_n\},\{\kappa_n\}$ such that
$\|\eta_n-\eta\|\to 0$,
$\|\kappa_n-\kappa\|\to 0$, and
for all $n$ the function $s\mapsto \<U_s\eta_n,\kappa_n\>$ vanishes at infinity.
Then a routine estimation shows that
the functions $s\mapsto \<U_s\eta_n,\kappa_n\>$
converge uniformly to the function $s\mapsto \<U_s\eta,\kappa\>$,
and hence this latter function vanishes at infinity.
The result now follows from \propref{c0 coef 2}.
\end{proof}

\section{Rieffel properness}\label{rieffel proper}

We will show that if an action $(A,\alpha)$ is proper in Rieffel's sense
\cite[Definition~1.2]{proper} (see also \cite[Definition~4.5]{integrable}
then it is s-proper.
Rieffel's definitions of proper action in both of the above papers involve integration of $A$-valued functions on $G$,
and we have recorded our conventions regarding vector-valued integration
in the discussion preceding \propref{module}.
In \cite{proper}, Rieffel defined an action $(A,\alpha)$ to be \emph{proper}
(and we follow \cite{BusEch4} in using the term \emph{Rieffel proper})
if
$s\mapsto \alpha_s(a)b$ is integrable
for all $a,b$ in some dense subalgebra,
plus other conditions that we will not need.

\begin{cor}\label{integrable then proper}
Let $(A,\alpha)$ be an action.
\begin{enumerate}
\item Suppose that there is a dense $\alpha$-invariant subset $A_0$ of $A$ such that for all $a,b\in A_0$ the function
\begin{equation}\label{function}
s\mapsto \alpha_s(a)b
\end{equation}
is
integrable.
Then $\alpha$ is s-proper in the sense of \defnref{s-proper}.

\item
Suppose that there is a dense $\alpha$-invariant subset $A_0$ of $A$ such that for all $a\in A_0$
and all $\omega\in A^*$
the function 
\[
s\mapsto \omega(\alpha_s(a))
\]
is integrable.
Then $\alpha$ is w-proper in the sense of \defnref{w-proper}.
\end{enumerate}
\end{cor}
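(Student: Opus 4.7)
The plan is to show that in each case the integrability hypothesis, combined with right uniform continuity of the relevant orbit function, already forces decay at infinity on the dense subalgebra $A_0$, and then to extend this decay to all of $A$ by uniform approximation. The real work is the following key lemma: a continuous integrable function $F:G\to B$ valued in a Banach space $B$ that is right uniformly continuous automatically lies in $C_0(G,B)$.

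For part (1), fix $a,b\in A_0$ and set $F(s)=\alpha_s(a)b$. Since $\alpha_{tv}(a)-\alpha_t(a)=\alpha_t(\alpha_v(a)-a)$ and each $\alpha_t$ is isometric,
\[
\|F(tv)-F(t)\|\le \|\alpha_v(a)-a\|\,\|b\|,
\]
which tends to $0$ uniformly in $t$ as $v\to e$ by strong continuity of $\alpha$; so $F$ is right uniformly continuous.

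To establish the key lemma, suppose $F$ does not vanish at infinity. Then the closed set $S=\{s\in G:\|F(s)\|\ge\epsilon\}$ is not relatively compact for some $\epsilon>0$. Using right uniform continuity, choose a compact symmetric neighborhood $V$ of $e$ with $\|F(tv)-F(t)\|<\epsilon/2$ for all $t\in G$, $v\in V$; then $\|F(sv)\|\ge\epsilon/2$ for $s\in S$, $v\in V$. I would inductively pick $s_{n+1}\in S\setminus\bigcup_{i\le n}s_iVV^{-1}$, possible because that union is compact while $S$ is not, arranging that the translates $\{s_nV\}$ are pairwise disjoint. Then by left-invariance of Haar measure,
\[
\int_G\|F(s)\|\,ds\;\ge\;\sum_n\int_{s_nV}\|F(s)\|\,ds\;=\;\sum_n\int_V\|F(s_nv)\|\,dv\;\ge\;\sum_n\tfrac{\epsilon}{2}|V|\;=\;\infty,
\]
contradicting integrability. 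Hence $F\in C_0(G,A)$ for all $a,b\in A_0$. To reach arbitrary $a,b\in A$, pick $a_0,b_0\in A_0$ close to $a,b$; the estimate
\[
\|\alpha_s(a)b-\alpha_s(a_0)b_0\|\le \|a-a_0\|\,\|b\|+\|a_0\|\,\|b-b_0\|
\]
is uniform in $s$, so $s\mapsto\alpha_s(a)b$ is a uniform limit of $C_0(G,A)$ functions, and is therefore in $C_0(G,A)$. This is s-properness.

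Part (2) is parallel: for $a\in A_0$ and $\omega\in A^*$ the scalar function $g(s)=\omega(\alpha_s(a))$ satisfies $|g(tv)-g(t)|\le\|\omega\|\,\|\alpha_v(a)-a\|$, which gives right uniform continuity, and the key lemma applied to complex-valued functions yields $g\in C_0(G)$. Density in $a$ (with $\omega$ fixed), via the uniform-in-$s$ bound $|\omega(\alpha_s(a))-\omega(\alpha_s(a_0))|\le\|\omega\|\,\|a-a_0\|$, then extends the conclusion to all $a\in A$, proving w-properness. The main obstacle is the key lemma; the only delicate point there is the inductive extraction of a sequence whose $V$-translates are pairwise disjoint, which is handled by the compactness of $VV^{-1}$.
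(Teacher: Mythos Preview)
Your proof is correct and follows essentially the same route as the paper: establish (right) uniform continuity of the orbit map, invoke a Barbalat-type lemma saying that a uniformly continuous integrable Banach-valued function on $G$ vanishes at infinity, and then pass to all of $A$ by density. The only difference is that you supply a direct proof of the Barbalat lemma via the disjoint-translates argument, whereas the paper reduces to the scalar case by composing with the norm and cites \cite{carcano} for that case; your self-contained argument is a fine substitute.
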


\begin{proof}
(1)
Since the functions \eqref{function} are uniformly continuous in norm, it follows immediately from 
the elementary lemma
\lemref{barbalat}
below
that
$s\mapsto \alpha_s(a)b$ is in $C_0(G,A)$ for all $a,b\in A_0$,
and then 
(1)
follows by density.

(2)
This can be proved similarly to (1), except now the functions are scalar-valued.
\end{proof}

In the above proof we referred to the following:

\begin{lem}\label{barbalat}
Let
$B$ be a Banach space, and let
$f:G\to B$ be uniformly continuous and integrable.
Then $f$ vanishes at infinity.
\end{lem}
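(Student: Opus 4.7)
The plan is to argue by contradiction. Suppose $f$ fails to vanish at infinity: there exists $\epsilon > 0$ such that the set
\[
S = \{s \in G : \|f(s)\| \ge \epsilon\}
\]
is not contained in any compact subset of $G$. I will use uniform continuity to spread each point of $S$ to a translate of fixed positive Haar measure, and then extract countably many such translates that are pairwise disjoint, forcing $\int_G\|f\|\,d\mu = \infty$.

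By (left) uniform continuity, choose a compact symmetric neighborhood $V$ of $e$ such that $\|f(s)-f(t)\| < \epsilon/2$ whenever $s\inv t \in V$; then $\|f\| \ge \epsilon/2$ on $sV$ for every $s \in S$. The crux of the proof is now to extract a sequence $\{s_n\} \subset S$ for which the translates $s_n V$ are pairwise disjoint. I will do this inductively: having chosen $s_1,\dots,s_n$, the set $K_n = \bigcup_{k=1}^n s_k V V\inv$ is compact (as a finite union of continuous images of the compact set $V \times V$), so $S \minus K_n$ is nonempty, and any $s_{n+1}$ from this difference satisfies $s_{n+1} V \cap s_k V = \varnothing$ for $k \le n$, since a nonempty intersection would force $s_{n+1} \in s_k V V\inv \subset K_n$. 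This is essentially the same extraction device used in the proof of \propref{pointwise proper C0(X)}.

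With such a sequence in hand, left-invariance of Haar measure $\mu$ gives $\mu(s_n V) = \mu(V) > 0$ for all $n$, and pairwise disjointness yields
\[
\int_G \|f\|\,d\mu \;\ge\; \sum_{n=1}^\infty \int_{s_n V} \|f(s)\|\,d\mu(s) \;\ge\; \sum_{n=1}^\infty \frac{\epsilon}{2}\,\mu(V) \;=\; \infty,
\]
contradicting $f \in L^1(G,B)$.

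The technical obstacles are minor but worth flagging. First, one must align conventions so that the version of uniform continuity used (controlling $f(t)$ on left translates $sV$) matches the invariance side of the Haar measure employed; a symmetric statement works if one prefers right uniform continuity with right Haar measure. Second, ``integrable'' must be interpreted as integrability against Haar measure, which is the convention implicit in the Rieffel-proper hypotheses of \corref{integrable then proper} where this lemma is applied. Neither point presents real difficulty, and no additional hypothesis on $G$ (e.g., $\sigma$-compactness) is needed because the inductive construction produces the sequence directly from non-relative-compactness of $S$.
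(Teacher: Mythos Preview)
Your argument is correct and self-contained. The extraction of a disjoint sequence of translates via $K_n=\bigcup_{k\le n}s_kVV^{-1}$ is valid, and the final estimate is clean; your remarks on matching the side of uniform continuity with the side of Haar invariance are apt.

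The paper takes a shorter but less self-contained route: it observes that $s\mapsto\|f(s)\|$ is uniformly continuous and in $L^1(G)$, thereby reducing to the scalar-valued case, and then simply cites the literature (Carcano, which in turn is the locally-compact-group version of the classical Barbalat lemma on $\R$). In effect you have written out precisely the ``routine adaptation'' that the paper defers to the reference. What you gain is a proof that stands on its own; what the paper gains is brevity. Substantively the two arguments coincide once one unpacks the citation, since the scalar Barbalat argument is exactly the disjoint-translates contradiction you carried out.
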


\begin{proof}
Since the composition of $f$ with the norm on $B$ is uniformly continuous,
and $\|f\|_1=\int_G \|f(s)\|\,ds<\infty$ by hypothesis,
so this follows immediately from the scalar-valued case
(for which, see \cite[Theorem~1]{carcano}),
and which itself 
is a routine adaptation of a classical result about scalar-valued functions on $\R$,
sometimes referred to as Barbalat's Lemma.
\end{proof}

In the commutative case, \corref{integrable then proper} (1) has a converse.
First, following \cite{BusEch4}, we will call an action $(A,\alpha)$
\emph{Rieffel proper} if 
it satisfies the conditions of \cite[Definition~1.2]{proper}.

\begin{prop}\label{abel}
If $A=C_0(X)$ is commutative, then an action $(A,\alpha)$ is s-proper if and only if it is Rieffel proper.
\end{prop}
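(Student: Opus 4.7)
The plan is to invoke Corollary~\ref{integrable then proper}(1) for the direction ($\Leftarrow$): Rieffel properness supplies the dense invariant subalgebra on which $s\mapsto \alpha_s(a)b$ is integrable, and that corollary then delivers s-properness. So essentially all the work lies in the converse.

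Assuming $(C_0(X),\alpha)$ is s-proper, I would first apply Corollary~\ref{proper action} to translate the hypothesis to $X$ being a proper $G$-space, and then take $A_0 := C_c(X)$ as the candidate dense $\alpha$-invariant $*$-subalgebra required by Rieffel's definition. The essential integrability condition to verify is that for $f,g \in C_c(X)$, the $A$-valued function $s \mapsto \alpha_s(f)g$ is integrable. Here I would recycle the compact-support argument already contained in the proof of Proposition~\ref{proper C0(X)}: since $\phi(x,s) = (sx,x)$ is proper and $f\times g \in C_c(X\times X)$, the pullback $\phi^*(f\times g)$ lies in $C_c(G\times X)$, which forces the support of $s \mapsto \alpha_s(f)g$ to sit in a fixed compact $K^{-1}\subset G$. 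Combined with strong continuity of $\alpha$, this gives a continuous, compactly supported, hence integrable, function, with $\|{\cdot}\|_1$ bounded by $\|f\|_\infty \|g\|_\infty$ times the Haar measure of $K^{-1}$.

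The hard part will be dispatching the "other conditions" in Rieffel's definition that the paper's informal recap brushes aside. These package the existence of a generalized fixed-point algebra sitting inside $M(A)^\alpha$, densely spanned by integrals of the form $\int_G \alpha_s(\bar f g)\,ds$, together with a few structural compatibilities. In the commutative proper case, however, these all follow from well-known features of proper $G$-spaces: the quotient $G\backslash X$ is locally compact Hausdorff, the averaging map $E(h)(Gx) := \int_G h(sx)\,ds$ carries $C_c(X)$ into $C_c(G\backslash X)$, and the identifications $M(C_0(X))^\alpha \cong C_b(X)^G \cong C_b(G\backslash X)$ make Rieffel's additional axioms reduce to standard facts about averaging. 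I expect these to be routine once integrability is in hand; nothing beyond properness of $X$ and Fubini for vector-valued integrals (as summarized before Proposition~\ref{module}) should be needed.
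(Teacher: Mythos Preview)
Your argument is correct, and the ($\Leftarrow$) direction matches the paper's exactly. For ($\Rightarrow$), however, the paper takes a shorter route than you do: after using \corref{proper action} to conclude that $X$ is a proper $G$-space (as you also do), it simply invokes \cite[Theorem~4.7 and its proof]{integrable}, where Rieffel already verified that proper $G$-spaces give Rieffel-proper actions with $A_0 = C_c(X)$. Your approach instead re-derives this by hand: you redo the compact-support argument for integrability and then sketch the verification of Rieffel's remaining axioms via the averaging map onto $C_c(G\backslash X)$. This is perfectly legitimate and more self-contained, but it is essentially reproducing the content of Rieffel's theorem rather than citing it; the paper's version trades self-containment for brevity.
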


\begin{proof}
First assume that $\alpha$ is s-proper.
Then by \thmref{proper action} the $G$-space $X$ is proper,
and then it follows from \cite[Theorem~4.7 and and its proof]{integrable} that $\alpha$ is Rieffel proper.

Conversely, if $\alpha$ is Rieffel proper, then in particular it satisfies the hypothesis of \corref{integrable then proper} (1), so $\alpha$ is s-proper.
\end{proof}

\begin{rem}
Thus,
if the $G$-space $X$ is proper, then by \cite[Theorem~1.5]{proper}
(for the case of free action,
see also \cite[Situation~2]{rieffelapplications}, which refers to \cite{gre:smooth})
there is an ideal of $C_0(X)\rtimes_r G$
(which is known to equal $C_0(X)\rtimes G$ in this case --- see \propref{locally regular} below)
that is Morita equivalent to $C_0(G\under X)$.
This uses the following:
for $f\in C_c(X)$ the integral
\[
\what f(Gx):=\int_G f(sx)\,ds
\]
defines $\what f\in C_c(G\under X)$.
If the action on $X$ is just pointwise proper,
the integral $\int_G f(sx)\,ds$ still makes sense for $f\in C_c(X)$.
It would be interesting to know what properties persist in this case.
\end{rem}

\begin{ex}
\propref{abel} is not true for arbitrary actions $(A,\alpha)$.
For example, let $G$ be the free group $\F_n$ with $n>1$, and let $l$ be the length function.
Haagerup proves in \cite{haagerup} that for any $a>0$ the function $s\mapsto e^{-al(s)}$
is positive definite.

For $k\in\N$ define $h_k(s)=e^{-l(s)/k}$,
and let $U_k$ be the associated cyclic representation on a Hilbert space $H_k$,
so that we have a cyclic vector $\xi_k$ for $U_k$ with
\[
\<U_k(s)\xi_k,\xi_k\>=h_k(s).
\]
For each $k$, since $h_k$ vanishes at infinity the associated inner action $\alpha_k=\ad U_k$ of $G$ on $\KK(H_k)$ is s-proper, by \corref{cyclic}.

We claim that not all these actions $\alpha_k$ can be Rieffel proper.
Rieffel shows in \cite[Theorem~7.9]{integrable}
that the action $\alpha$ is proper in the sense of \cite[Definition~4.5]{integrable}
if and only if the representation $U$ is square-integrable in the sense of \cite[Definition~7.8]{integrable}.
This latter definition is somewhat nonstandard,
in that it uses concepts from the theory of left Hilbert algebras.
Also, Rieffel's definition of proper action in \cite{integrable} is somewhat complicated in that it involves $C^*$-valued weights.
In this paper we prefer to deal with the more accessible definition of Rieffel-proper action in \cite[Definition~1.2]{proper},
which Rieffel shows implies the properness condition \cite[Definition~4.5]{integrable}.
Actually, we need not concern ourselves here with Rieffel's definition of square-integrable representations, rather all we need is his reassurance (see \cite[Corollary~7.12 and Theorem~7.14]{integrable} that a cyclic representation of $G$ is square-integrable in his sense if and only if it is contained in the regular representation of $G$ --- so his notion of square integrability is equivalent to the more usual one (as he assures us in his comment following \cite[Definition~7.8]{integrable}).

Suppose that for every $k\in\N$ the action $\alpha_k$ of $G$ on $\KK(H_k)$ is Rieffel proper.
Then, as noted above, $\alpha_k$ is also proper in the sense of \cite[Definition~4.5]{integrable}, and so the representation $U_k$ is contained in the regular presentation $\lambda$.
Now we argue exactly as in \cite[proof of Proposition~4.4]{brogue}:
since the functions $h_k$ converge to 1 pointwise on the discrete group $G$,
for all $s\in G$ we have
\[
\<U_k(s)\xi_k,\xi_k\>\to 1,
\]
and hence
\[
\|U_k(s)\xi_k-\xi_k\|\to 0.
\]
Thus the direct sum representation $\bigoplus_k U_k$ weakly contains the trivial representation.
But since each $U_k$ is contained in $\lambda$, the direct sum is weakly contained in $\lambda$.
This gives a contradiction, since $G=\F_n$ is nonamenable.
\end{ex}

\section{Full equals reduced}\label{full=reduced}

\begin{defn}
Let $(A,\alpha)$ be an action.
We say the \emph{full and reduced crossed products of $(A,\alpha)$ are equal}
if the regular representation
\[
\Lambda:A\rtimes_\alpha G\to A\rtimes_{\alpha,r} G
\]
is an isomorphism.
\end{defn}

It is an old theorem \cite{phillipsproper}
that if $X$ is a second countable proper $G$-space then the associated action $(C_0(X),\alpha)$ has full and reduced crossed products equal.
It is folklore that the second-countability hypothesis can be removed --- see the proof of \propref{locally regular} and \remref{proper case}.
We extend this to pointwise proper actions and weaken the countability hypothesis:

\begin{thm}\label{pointwise regular}
If $X$ is a first countable pointwise proper $G$-space,
then the 
full and reduced crossed products of the
associated action $(C_0(X),\alpha)$ are equal.
\end{thm}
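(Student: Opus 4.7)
The plan is to establish $\|f\|_{\max} = \|f\|_r$ for $f \in C_c(G, C_0(X))$ by combining a per-orbit reduction (in which each orbit's crossed product already satisfies full $=$ reduced by the classical proper case) with a first-countable localization argument that transfers norm estimates from a global covariant representation down to representations living on individual orbits.

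\emph{Step 1 (per-orbit reduction).} By \propref{T1}, every orbit $Gx$ is closed in $X$. Pointwise properness implies that the canonical bijection $G/G_x \to Gx$ is a $G$-equivariant homeomorphism and the isotropy $G_x$ is compact. The action of $G$ on $G/G_x$ is then proper (compact stabilizers), so \propref{locally regular} gives $C(G/G_x) \rtimes G = C(G/G_x) \rtimes_r G$. Restriction to the closed orbit yields an $\alpha$-equivariant surjection $C_0(X) \twoheadrightarrow C_0(Gx) = C(G/G_x)$, hence a surjection of full crossed products onto $C(G/G_x) \rtimes G$. In particular, each induced representation $\mathrm{Ind}_x$ of $C_0(X) \rtimes G$ (induced from the evaluation character $\epsilon_x \colon C_0(X) \to \mathbb{C}$) factors through the reduced crossed product, and the usual characterization of the reduced norm via point-evaluations gives $\|f\|_r = \sup_{x \in X} \|\mathrm{Ind}_x(f)\|$.

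\emph{Step 2 (localization via first countability).} Let $(\pi, U, H, \xi)$ be a cyclic covariant representation with $\|\pi \rtimes U(f)\xi\| \geq \|f\|_{\max} - \epsilon$ and $\|\xi\| = 1$; let $P$ be the spectral measure of $\pi$ and $\mu = \langle P(\cdot)\xi, \xi\rangle$ the associated Radon probability measure on $X$. Fix $x_0 \in \mathrm{supp}(\mu)$ and use first countability to choose a decreasing sequence $\{V_n\}$ of relatively compact open neighborhoods of $x_0$ with $\bigcap_n V_n = \{x_0\}$. Define the localizing vectors $\xi_n := P(V_n)\xi / \|P(V_n)\xi\|$ (whenever nonzero) and states $\omega_n(b) := \langle b \xi_n, \xi_n\rangle$ on $C_0(X) \rtimes G$.

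\emph{Step 3 (passage to the limit).} By pointwise properness, $K_n := \{s \in G : sx_0 \in V_n\}$ is compact, with $\bigcap_n K_n = G_{x_0}$. For each $g \in C_0(X)$, the matrix coefficients $s \mapsto \langle \pi(g) U_s \xi_n, \xi_n\rangle$ are asymptotically supported on $K_n$ as $n \to \infty$, and estimations using strong continuity of $\alpha$ show that they converge to matrix coefficients of $\mathrm{Ind}_{x_0}$. A diagonal argument, valid by first countability of $X$, across countably many choices of $x_0$ in a dense subset of $\mathrm{supp}(\mu)$ and scales $n$ then produces a sequence of states bounded in $f$-value by $\sup_x \|\mathrm{Ind}_x(f)\| = \|f\|_r$ and converging in $\omega_\xi$-value. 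This gives $\|f\|_{\max} - \epsilon \leq \|f\|_r$, and since $\epsilon > 0$ was arbitrary, $\|f\|_{\max} = \|f\|_r$.

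\emph{Main obstacle.} The decisive technical step is Step 3: making rigorous the convergence of localized matrix coefficients to those of $\mathrm{Ind}_{x_0}$, and verifying that the combined diagonal sequence actually recovers $\omega_\xi(f)$. First countability is essential because it replaces nets by sequences and permits the diagonal construction; pointwise properness enters twice, once to identify orbits as $G/G_x$ with compact stabilizer (powering Step 1) and once to guarantee that the compact sets $K_n$ shrink to the stabilizer (powering the localization in Step 3). Without either hypothesis the approximation breaks down, which accords with the fact that the theorem assumes both.
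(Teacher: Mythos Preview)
Your Step~1 is correct and matches the paper's per-orbit input.  The trouble is Step~3, which is not a proof but a wish list, and as written contains two genuine gaps.

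First, the assertion that the localized states $\omega_n$ are ``bounded in $f$-value by $\sup_x\|\mathrm{Ind}_x(f)\|$'' is exactly what has to be proved.  Your neighborhoods $V_n$ are not $G$-invariant, so the projections $P(V_n)$ do not commute with the $U_s$, and there is no mechanism forcing $\langle \pi(g)U_s\xi_n,\xi_n\rangle$ to be supported near $K_n$ or to approximate a matrix coefficient of $\mathrm{Ind}_{x_0}$.  You have asserted the conclusion, not argued it.  Second, the ``diagonal argument across countably many $x_0$ in a dense subset of $\mathrm{supp}(\mu)$'' cannot recover $\omega_\xi(f^*f)$: a countable dense subset may have $\mu$-measure zero, so sampling there says nothing about $\omega_\xi$.

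The paper takes a different and cleaner route.  Instead of localizing an arbitrary cyclic representation at points, it restricts attention to \emph{irreducible} covariant pairs and proves quasi-regularity directly.  The key lemma is that, since $G\backslash X$ is $T_1$ and first countable, each orbit is a countable decreasing intersection of open \emph{$G$-invariant} sets $O_n$.  For an irreducible $(\pi,U)$ with spectral measure $P$, $G$-invariance forces $P(O_n)\in\{0,1\}$, and countable monotone convergence of spectral projections gives $P(G\cdot x)=1$ for every $x$; hence $X$ (after cutting by $\ker\pi$) is a single orbit.  Once quasi-regularity is in hand, the per-orbit full$=$reduced fact from your Step~1 combines with a structural lemma on direct sums and faithful crossed-product maps to finish.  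The moral: work with $G$-invariant neighborhoods of \emph{orbits} in the quotient, not with point-neighborhoods in $X$, and exploit irreducibility rather than trying to disintegrate a general cyclic representation by hand.
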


We need some properties of the ``full = reduced" phenomenon for actions.
First, it is
frequently
inherited by invariant subalgebras:

\begin{lem}\label{faithful}
Let $(A,\alpha)$ and $(B,\beta)$ be actions,
and let $\phi:A\to M(B)$ be an injective $\alpha-\beta$ equivariant homomorphism.
Suppose that the crossed-product homomorphism
\[
\phi\rtimes G:A\rtimes_\alpha G\to M(B\rtimes_\beta G)
\]
is faithful.
If the full and reduced crossed products of $\beta$ are equal,
then the full and reduced crossed products of $\alpha$ are equal.
\end{lem}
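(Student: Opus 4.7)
The plan is to exploit the naturality of reduced crossed products under equivariant nondegenerate morphisms and then run a short diagram chase. Specifically, $\phi$ induces a $*$-homomorphism $\phi\rtimes_r G : A\rtimes_{\alpha,r} G \to M(B\rtimes_{\beta,r} G)$ fitting into the commutative square
\[
\xymatrix{
A\rtimes_\alpha G \ar[r]^-{\phi\rtimes G}\ar[d]_-{\Lambda^A} & M(B\rtimes_\beta G)\ar[d]^-{\Lambda^B} \\
A\rtimes_{\alpha,r} G \ar[r]_-{\phi\rtimes_r G} & M(B\rtimes_{\beta,r} G).
}
\]

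Given the square, the argument is immediate: if $x \in \ker \Lambda^A$, then by commutativity
\[
\Lambda^B\bigl((\phi\rtimes G)(x)\bigr) = (\phi\rtimes_r G)\bigl(\Lambda^A(x)\bigr) = 0.
\]
Since $\Lambda^B$ is an isomorphism by hypothesis (and hence so is its canonical extension to multipliers), we get $(\phi\rtimes G)(x) = 0$; since $\phi\rtimes G$ is faithful by hypothesis, $x = 0$. Hence $\Lambda^A$ is injective, and being surjective by construction it is an isomorphism, which is what is meant by ``full equals reduced for $\alpha$''.

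The only real content is establishing the commutativity of the square, and this is the step I expect to be the main (if mild) obstacle. Concretely, fix a faithful nondegenerate representation $\rho : B \to B(H)$ and form its associated regular covariant pair $(\wilde\rho, 1\otimes\lambda)$ on $H\otimes L^2(G)$; its integrated form realizes $\Lambda^B$ after identifying the image with $B\rtimes_{\beta,r} G$. Composing the canonical extension of $\rho$ to $M(B)$ with $\phi$ produces the regular covariant pair for $A$ on the same Hilbert space, whose integrated form likewise realizes $\Lambda^A$. Comparing the two routes around the square on generators $a\in A$ and $z\in C^*(G)$ --- where both visibly agree --- yields the commutativity. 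The only subtlety is that $\phi$ takes values in $M(B)$ rather than $B$, so nondegeneracy of $\phi$ and extensions to multiplier algebras must be handled with some care.
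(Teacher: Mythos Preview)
Your proposal is correct and takes essentially the same approach as the paper: set up the commutative square relating $\phi\rtimes G$, $\phi\rtimes_r G$, and the two regular representations, then observe that $\Lambda^B\circ(\phi\rtimes G)$ is faithful, forcing $\Lambda^A$ to be faithful. The paper's proof is terser (it simply asserts the diagram commutes and concludes in one line), whereas you spell out the diagram chase and sketch why the square commutes; but the underlying argument is identical.
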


\begin{proof}
We have a commutative diagram
\[
\xymatrix@C+30pt{
A\rtimes_\alpha G \ar[r]^-{\phi\rtimes G} \ar[d]_{\Lambda_\alpha}
&M(B\rtimes_\beta G) \ar[d]^{\Lambda_\beta}
\\
A\rtimes_{\alpha,r} G \ar[r]_-{\phi\rtimes_r G}
&M(B\rtimes_{\beta,r} G),
}
\]
and the composition $\Lambda_\beta\circ(\phi\rtimes G)$ is faithful,
and therefore $\Lambda_\alpha$ is faithful.
\end{proof}

Next, 
``full = reduced'' is preserved by extensions:

\begin{lem}\label{extension}
Let $(A,\alpha)$ be an action, and let $J$ be a closed invariant ideal of $A$.
If the actions of $G$ on $J$ and on $A/J$ both
have full and reduced crossed products equal,
then the full and reduced crossed products of $\alpha$ are equal.
\end{lem}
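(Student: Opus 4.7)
The plan is to run a straightforward four-term diagram chase on the naturally commuting ladder
\[
\xymatrix@C+10pt{
0 \ar[r] & J\rtimes_\alpha G \ar[r] \ar[d]_{\Lambda_J} & A\rtimes_\alpha G \ar[r] \ar[d]_{\Lambda_A} & (A/J)\rtimes_\alpha G \ar[r] \ar[d]_{\Lambda_{A/J}} & 0 \\
0 \ar[r] & J\rtimes_{\alpha,r} G \ar[r] & A\rtimes_{\alpha,r} G \ar[r] & (A/J)\rtimes_{\alpha,r} G \ar[r] & 0,
}
\]
where I write $\alpha$ by abuse for the restricted action on $J$ and the quotient action on $A/J$, and where each $\Lambda_\bullet$ denotes the appropriate regular representation. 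What is needed about this diagram are three classical facts: (a) the top row is exact (i.e., the full crossed product functor is exact on invariant short exact sequences); (b) the leftmost bottom horizontal map, induced by the invariant-ideal inclusion $J\subset A$, is injective; and (c) the rightmost bottom horizontal map, induced by the equivariant quotient $A\to A/J$, is surjective. The middle of the bottom row may fail to be exact in general, but this is irrelevant.

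Surjectivity of $\Lambda_A$ is automatic because $A\rtimes_{\alpha,r} G$ is by definition a quotient of $A\rtimes_\alpha G$, so the whole point is injectivity of $\Lambda_A$. Given $x\in A\rtimes_\alpha G$ with $\Lambda_A(x)=0$, the right-hand square sends $x$ to $0$ in $(A/J)\rtimes_{\alpha,r} G$, and since $\Lambda_{A/J}$ is an isomorphism by hypothesis, $x$ must already map to $0$ in $(A/J)\rtimes_\alpha G$. Exactness of the top row then places $x$ in the image of $J\rtimes_\alpha G\to A\rtimes_\alpha G$; write $x$ as the image of some $y\in J\rtimes_\alpha G$. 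Commutativity of the left square shows that $\Lambda_J(y)$ lies in the kernel of the bottom-left map, and by (b) that map is injective, so $\Lambda_J(y)=0$. Since $\Lambda_J$ is an isomorphism by hypothesis, $y=0$, and therefore $x=0$.

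The only point with any content is the appeal to (b), which is a standard fact in the theory of crossed products (invariant ideals inject into the reduced crossed product of the ambient algebra). Once this, together with the essentially formal (a) and (c), is cited from the literature, the proof reduces to the mechanical chase above; I expect no real obstacle.
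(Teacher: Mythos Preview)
Your proposal is correct and is essentially identical to the paper's own proof: the same commuting ladder, the same two key inputs (exactness of the full-crossed-product sequence at the middle term, citing \cite{gre:local}, and injectivity of $J\rtimes_r G\to A\rtimes_r G$ for invariant $J$), and the same diagram chase. Your item (c) is never actually invoked in the chase, so you could drop it.
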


\begin{proof}
Let $\phi:J\hookrightarrow A$ be the inclusion map, and let $\psi:A\to A/J$ be the quotient map.
We have a commutative diagram
\[
\xymatrix{
J\rtimes G \ar[r]^-{\phi\rtimes G} \ar[d]_{\Lambda_J}
&A\rtimes G \ar[r]^-{\psi\rtimes G} \ar[d]_{\Lambda_A}
&A/J\rtimes G \ar[d]^{\Lambda_{A/J}}
\\
J\rtimes_r G \ar[r]_-{\phi\rtimes_r G}
&A\rtimes_r G \ar[r]_-{\psi\rtimes_r G}
&A/J\rtimes_r G.
}
\]
The argument is a routine diagram-chase.
The vertical maps are the regular representations, which are surjective,
and moreover $\Lambda_J$ and $\Lambda_{A/J}$ are injective by hypothesis.
Since $J$ is an ideal, the map $\phi\rtimes G$ is an isomorphism onto
the kernel of $\psi\rtimes G$
\cite[Proposition~12]{gre:local}.
Further, since $J$ is an invariant subalgebra, $\phi\rtimes_r G$ is injective.
Let $x$ be in the kernel of $\Lambda_A$.
Then
\[
0=(\psi\rtimes_r G)\circ {\Lambda_A}(x)=\Lambda_{A/J}\circ (\psi\rtimes G)(x),
\]
so $x$ is in the kernel of $\psi\rtimes G$.
Thus $x\in J\rtimes G$, and
\[
0=\Lambda_A\circ (\phi\rtimes G)(x)=(\phi\rtimes_r G)\circ \Lambda_J(x),
\]
so $x=0$.
\end{proof}

Next we show that ``full = reduced'' is preserved by direct sums:

\begin{lem}\label{direct}
Let $\{(A_i,\alpha_i)\}_{i\in I}$ be a family of actions,
and assume that the full and reduced crossed products are equal for every $\alpha_i$.
Then the direct sum action 
\[
\left(\bigoplus_{i\in I} A_i,\bigoplus_{i\in I} \alpha_i\right)
\]
also has full and reduced crossed products equal.
\end{lem}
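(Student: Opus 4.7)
The plan is a two-step reduction: first handle finite direct sums by induction using Lemma~\ref{extension}, then pass to arbitrary index sets by an inductive-limit argument at the level of crossed products.

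For a finite direct sum $A_1\oplus\cdots\oplus A_n$, the summand $A_1$ is an invariant closed ideal with quotient isomorphic to $A_2\oplus\cdots\oplus A_n$; Lemma~\ref{extension} together with induction on $n$ then gives the conclusion for every finite subfamily of $\{(A_i,\alpha_i)\}_{i\in I}$.

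For the general case, let $A=\bigoplus_{i\in I}A_i$ with $\alpha=\bigoplus_i\alpha_i$, and for each finite $F\subset I$ set $A_F=\bigoplus_{i\in F}A_i$ with restricted action $\alpha_F$. Each $A_F$ is an invariant closed ideal of $A$, the family $\{A_F\}$ is directed by inclusion, and $\bigcup_F A_F$ is dense in $A$. Because $A_F$ is an invariant ideal of $A$, the natural maps
\[
A_F\rtimes_{\alpha_F}G\hookrightarrow A\rtimes_\alpha G
\midtext{and}
A_F\rtimes_{\alpha_F,r}G\hookrightarrow A\rtimes_{\alpha,r}G
\]
are isometric ideal inclusions (the full-crossed-product case is the one already cited in the proof of Lemma~\ref{extension}; the reduced case follows by restricting a faithful regular covariant representation of $(A,\alpha)$ to $A_F$). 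Moreover, any $f\in C_c(G,A)$ is uniformly approximable by elements of $C_c(G,A_F)$ for suitable finite $F$, so $\bigcup_F A_F\rtimes_{\alpha_F}G$ is dense in $A\rtimes_\alpha G$, and likewise $\bigcup_F A_F\rtimes_{\alpha_F,r}G$ is dense in $A\rtimes_{\alpha,r}G$. Hence both $A\rtimes_\alpha G$ and $A\rtimes_{\alpha,r}G$ are $C^*$-inductive limits of the corresponding directed systems, and naturality of the regular representation shows that $\Lambda_\alpha$ restricts to $\Lambda_{\alpha_F}$ on each $A_F\rtimes_{\alpha_F}G$. By the finite case each $\Lambda_{\alpha_F}$ is an isomorphism, and hence isometric; given $x\in\ker\Lambda_\alpha$, we approximate $x$ by elements $x_n\in A_{F_n}\rtimes_{\alpha_{F_n}}G$ and use
\[
\|x_n\|=\|\Lambda_{\alpha_{F_n}}(x_n)\|=\|\Lambda_\alpha(x_n)\|\to 0
\]
to conclude $x=0$, so $\Lambda_\alpha$ is also an isomorphism.

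The main point to verify is the compatibility of the crossed-product constructions with these inductive limits of invariant ideals, which comes down to the isometric embedding statements above; both are standard, the full-crossed-product case being the ideal inclusion already used in the proof of Lemma~\ref{extension}, and the reduced case following from the fact that a faithful regular covariant representation of $(A,\alpha)$ restricts to a faithful regular covariant representation of $(A_F,\alpha_F)$.
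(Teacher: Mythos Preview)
Your proof is correct and follows essentially the same route as the paper's: the finite case by induction via \lemref{extension}, then the general case by realizing both the full and reduced crossed products as inductive limits over finite subfamilies and using that the regular representation is an isomorphism on each piece. The paper cites \cite[Proposition~12]{gre:local} for the ideal inclusion on the full side and then simply appeals to ``properties of inductive limits'' for the conclusion, whereas you spell out the density and kernel arguments explicitly; but the strategy is the same.
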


\begin{proof}
By \lemref{extension}, the conclusion holds if $I$ has cardinality 2, and by induction it holds if $I$ is finite.
By \cite[Proposition~12]{gre:local},
we can regard $(\bigoplus_{i\in I}A_i)\rtimes G$
as the inductive limit of the ideals $(\bigoplus_{i\in F}A_i)\rtimes G$ for finite $F\subset I$.
Similarly (but not requiring the reference to \cite{gre:local}),
we can regard $(\bigoplus_{i\in I}A_i)\rtimes_r G$
as the inductive limit of the ideals $(\bigoplus_{i\in F}A_i)\rtimes_r G$.
For every finite $F\subset I$ we have a commutative diagram
\[
\xymatrix{
\left(\bigoplus_{i\in F}A_i\right)\rtimes G \ar@{^(->}[r] \ar[d]_{\Lambda_F}^\simeq
&\left(\bigoplus_{i\in I}A_i\right)\rtimes G \ar[d]^{\Lambda_I}
\\
\left(\bigoplus_{i\in F}A_i\right)\rtimes_r G \ar@{^(->}[r]
&\left(\bigoplus_{i\in I}A_i\right)\rtimes_r G,
}
\]
where the vertical arrows are the regular representations.
Thus $\Lambda_I$ must be an isomorphism,
by properties of inductive limits.
\end{proof}

\begin{cor}\label{separate}
Let $(A,\alpha)$ be an action,
let
$\{(A_i,\alpha_i)\}_{i\in I}$ be a family of actions for which the full and reduced crossed products are equal,
and for each $i$ let $\phi_i:A\to M(A_i)$ be an $\alpha-\alpha_i$ equivariant homomorphism.
Let
\[
\phi:A\to M\left(\bigoplus_{i\in I}A_i\right)
\]
be the associated equivariant homomorphism.
Suppose that $\bigcap_{i\in I} \ker \phi_i=\{0\}$,
and that the crossed-product homomorphism
\[
A\rtimes_\alpha G\to M\left(\biggl(\bigoplus_{i\in I}A_i\biggr)\rtimes_{\alpha_i} G\right)
\]
is faithful.
Then $\alpha$ also has full and reduced crossed products equal.
\end{cor}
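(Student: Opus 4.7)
The plan is to combine Lemma \ref{direct} with Lemma \ref{faithful} in a direct way. First, I would set $B=\bigoplus_{i\in I}A_i$ equipped with the direct sum action $\beta=\bigoplus_{i\in I}\alpha_i$. By Lemma \ref{direct}, since each $\alpha_i$ has full and reduced crossed products equal, so does $\beta$. This takes care of the target of $\phi$.

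Next, I would check that the map $\phi:A\to M(B)$ is injective: if $\phi(a)=0$, then $\phi_i(a)=0$ for every $i$, so $a\in\bigcap_{i\in I}\ker\phi_i=\{0\}$ by hypothesis. The map $\phi$ is also $\alpha-\beta$ equivariant, because each $\phi_i$ is $\alpha-\alpha_i$ equivariant. The crossed-product homomorphism $\phi\rtimes G: A\rtimes_\alpha G\to M(B\rtimes_\beta G)$ is faithful by the standing hypothesis of the corollary (which is exactly what is stated, once one identifies $(\bigoplus_{i\in I}A_i)\rtimes_{\beta}G$ with the object appearing in the statement).

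With all hypotheses in place, Lemma \ref{faithful} applied to $\phi:(A,\alpha)\to(M(B),\beta)$ yields that the full and reduced crossed products of $\alpha$ are equal. There is no real obstacle here: everything has been arranged in the previous two lemmas, and the only thing to verify is that the hypothesis involving the intersection of kernels is what guarantees the injectivity of the aggregate map $\phi$, and that the remaining faithfulness hypothesis on $\phi\rtimes G$ is precisely what Lemma \ref{faithful} requires. Thus the corollary is essentially a packaging of the two earlier lemmas.
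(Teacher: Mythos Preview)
Your proposal is correct and is exactly the paper's approach: the paper's proof is the single sentence ``This follows immediately from Lemmas~\ref{faithful} and \ref{direct},'' and you have simply spelled out how those two lemmas combine (with the intersection-of-kernels hypothesis giving the injectivity of $\phi$ required by Lemma~\ref{faithful}).
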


\begin{proof}
This follows immediately from Lemmas~\ref{faithful} and \ref{direct}.
\end{proof}

We are almost ready for the proof of \thmref{pointwise regular},
but first we need to recall the notion of quasi-regularity,
and we only need this in the special case of closed orbits:

\begin{defn}[{special case of \cite[Page~221]{gre:local}}]
Let $G$ act on $X$, and assume that all orbits are closed.
Then the associated action of $G$ on $C_0(X)$ is \emph{quasi-regular} 
if for every irreducible covariant representation $(\pi,U)$ of $(C_0(X),G)$ there is an orbit $G\cdot x$ such that
\[
\ker \pi=\{f\in C_0(X):f|_{G\cdot x}=0\}.
\]
\end{defn}

In this case, $\pi$ factors through a faithful representation $\rho$ of $C_0(G\cdot x)$ such that
the covariant pair $(\rho,U)$ is 
an irreducible representation of the restricted action $(C_0(G\cdot x),\alpha)$.
By \cite[Corollary~19]{gre:local},
the action is quasi-regular
if the orbit space $G\under X$ is second countable or 
\emph{almost Hausdorff} in the sense that every closed subset contains a dense relative open Hausdorff subset.
Here we will prove a variant of this result:

\begin{prop}\label{quasi}
If a $G$-space $X$ is pointwise proper and 
first countable,
then the associated action of $G$ on $C_0(X)$ is quasi-regular.
\end{prop}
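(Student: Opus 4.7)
The plan is to exploit the spectral measure attached to $\pi$ and derive a zero-one law from irreducibility. Let $(\pi,U)$ be an irreducible covariant representation of $(C_0(X),G)$ on a Hilbert space $H$. Because $C_0(X)$ is commutative, $\pi$ extends via the bounded Borel functional calculus to a projection-valued measure $P\colon\mathrm{Borel}(X)\to\mathrm{Proj}(H)$ with $\pi(f)=\int f\,dP$. The covariance $U_s\pi(f)U_s^*=\pi(\alpha_s f)$ passes to Borel sets as $U_sP(E)U_s^*=P(sE)$, so the support $F:=\mathrm{supp}(P)$ is a closed $G$-invariant subset of $X$. Since $\ker\pi=\{f\in C_0(X):f|_F=0\}$, it suffices to prove $F=Gx$ for some orbit.

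The key step is a zero-one dichotomy. For any $G$-invariant Borel $E\subset X$, the projection $P(E)$ commutes with $\pi(C_0(X))$ (automatic in the commutative case) and with every $U_s$ (since $U_sP(E)U_s^*=P(sE)=P(E)$), so $P(E)$ lies in $(\pi(C_0(X))\cup U(G))'=\C I$ by irreducibility. Being a projection, $P(E)\in\{0,I\}$. Orbits $Gx$ are closed by \propref{T1} (hence Borel) and are $G$-invariant, so $P(Gx)\in\{0,I\}$ for every $x\in X$.

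If $P(Gx_0)=I$ for some $x_0\in F$ then $F\subset Gx_0$ and we are done. Otherwise assume $P(Gx)=0$ for all $x$ and aim for a contradiction. Pick $x_0\in F$; first countability supplies a decreasing compact neighborhood base $\{V_n\}_{n\in\N}$ of $x_0$ with $\bigcap_n V_n=\{x_0\}$. Each $GV_n=\bigcup_{s\in G}sV_n$ is open and $G$-invariant, contains $V_n$, and $P(V_n)>0$ since $V_n$ is an open neighborhood of $x_0\in\mathrm{supp}(P)$. By the dichotomy $P(GV_n)=I$ for every $n$, and since $\{GV_n\}$ is a decreasing sequence of Borel sets, $\sigma$-additivity of $P$ yields $P\bigl(\bigcap_n GV_n\bigr)=I$.

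Finally, $y\in\bigcap_n GV_n$ iff $Gy$ meets each $V_n$, iff $x_0\in\overline{Gy}$. Using \propref{T1} again, $\overline{Gy}=Gy$, and $x_0\in Gy$ is equivalent to $y\in Gx_0$. Hence $\bigcap_n GV_n=Gx_0$, giving $P(Gx_0)=I$, contradicting $P(Gx_0)=0$. The essential role of first countability is to replace a net-indexed intersection over all neighborhoods of $x_0$ by a countable decreasing sequence, which is precisely what $\sigma$-additivity of the spectral measure can handle; the remainder is a standard spectral/commutant manipulation, so the first-countability hypothesis is doing all the real work in the argument.
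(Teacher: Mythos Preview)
Your proof is correct and essentially identical to the paper's: both hinge on the zero-one law for $G$-invariant Borel sets under the spectral measure of $\pi$, and both obtain $P(Gx_0)=I$ by writing the orbit as a countable decreasing intersection of open $G$-invariant sets (the paper isolates this as \lemref{nbd} via first countability of the $T_1$ quotient $G\backslash X$, while you build the sets $GV_n$ inline by saturating a neighborhood base). The only organizational difference is that the paper first reduces to the case $\pi$ faithful (\lemref{special}) rather than working directly with $F=\operatorname{supp}(P)$; your minor slip of calling the $V_n$ both ``compact'' and ``open'' is harmless, since compactness plays no role and any countable decreasing neighborhood base suffices.
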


We first need a topological property of pointwise proper actions on first countable spaces:

\begin{lem}\label{nbd}
If a $G$-space $X$ is pointwise proper and 
first countable,
then each orbit is a countable decreasing intersection of open $G$-invariant sets.
\end{lem}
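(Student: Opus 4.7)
The plan is to build the required open $G$-invariant sets by saturating a countable neighborhood basis of the base point and use pointwise properness to show their intersection is exactly the orbit.

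Fix $x \in X$. By first countability choose a decreasing neighborhood basis $V_1 \supseteq V_2 \supseteq \cdots$ of $x$. Set
\[
U_n = G \cdot V_n = \bigcup_{s\in G} sV_n.
\]
Each $U_n$ is open (as a union of open sets), $G$-invariant by construction, and $U_1 \supseteq U_2 \supseteq \cdots$. Clearly $Gx \subseteq \bigcap_n U_n$ since $x \in V_n$ for all $n$.

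For the reverse inclusion, take $y \in \bigcap_n U_n$. For each $n$ pick $s_n \in G$ with $s_n\inv y \in V_n$. Since $\{V_n\}$ is a decreasing neighborhood basis of $x$, we have $s_n\inv y \to x$. Consequently the set
\[
K = \{s_n\inv y : n \in \N\} \cup \{x\}
\]
is compact. Now $s_n\inv \in ((y,K))$ for every $n$, and by pointwise properness (in the form of \lemref{orbit proper}) the set $((y,K))$ is compact.

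Passing to a subnet (note that $G$ need not be first countable, so we cannot expect a convergent subsequence, but compactness yields a convergent subnet) we obtain $s_{n_i}\inv \to t$ for some $t \in G$. Then $s_{n_i}\inv y \to ty$ by continuity of the action, while on the other hand $s_{n_i}\inv y \to x$ as noted. By Hausdorffness of $X$ we conclude $ty = x$, so $y = t\inv x \in Gx$. This gives $\bigcap_n U_n \subseteq Gx$, completing the proof.

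The only real subtlety is the last paragraph: since local compactness of $G$ does not entail first countability, one must be careful to take a convergent subnet rather than a subsequence. Everything else is a straightforward unwinding of the definitions of first countability and pointwise properness.
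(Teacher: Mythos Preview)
Your proof is correct. The paper, however, takes a shorter and more structural route: it observes that pointwise properness makes orbits closed (\propref{T1}), so the orbit space $G\under X$ is $T_1$; since the quotient map is open and continuous, $G\under X$ inherits first countability, and in a first-countable $T_1$ space every point is a countable decreasing intersection of open sets; pulling these back to $X$ gives the required $G$-invariant open sets. Your argument builds the same sets $U_n=G\cdot V_n$ directly (these are exactly the pullbacks of a neighborhood basis in the quotient), but instead of invoking $T_1$-ness of $G\under X$ you establish the reverse inclusion $\bigcap_n U_n\subset Gx$ by a hands-on compactness argument---which, in effect, reproves \propref{T1} inline. The paper's approach is cleaner because it factors through already-established lemmas; yours has the virtue of being self-contained and making the use of pointwise properness completely explicit.
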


\begin{proof}
Since orbits are closed, the quotient space $G\under X$ is $T_1$.
Since the quotient map is continuous and open, $G\under X$ is first countable.
In particular, every point is a countable decreasing intersection of open sets,
and the result follows.
\end{proof}

\begin{rem}
In \lemref{nbd} the first countability assumption could be weakened to: every point in $X$ is a $G_\delta$.
\end{rem}

It seems to us that the proof of \propref{quasi} is clearer if we separate out a special case:

\begin{lem}\label{special}
If a $G$-space $X$ is pointwise proper and 
first countable,
and if there is an irreducible covariant representation $(\pi,U)$ of $(C_0(X),G)$
such that $\pi$ is faithful,
then $X$ consists of a single orbit.
\end{lem}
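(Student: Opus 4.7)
The plan is to fix an arbitrary orbit $Gx \subseteq X$ and show that $X = Gx$, from which the conclusion follows. The starting point is \lemref{nbd}, which gives a decreasing sequence of open $G$-invariant sets $\{V_n\}$ with $\bigcap_n V_n = Gx$. Each $C_0(V_n)$ is then a nonzero $G$-invariant closed ideal of $C_0(X)$, and the first key step is to observe that the closed subspace $\pi(C_0(V_n))H$ is invariant under both $\pi(C_0(X))$ (because $C_0(V_n)$ is an ideal) and under $U$ (because $V_n$ is $G$-invariant and $(\pi,U)$ is covariant); irreducibility forces it to be $0$ or $H$, and faithfulness of $\pi$ on $C_0(V_n)\ne 0$ rules out $0$, so $\pi$ restricts to a nondegenerate representation of $C_0(V_n)$ for every $n$.

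Next, I would fix a unit vector $\xi \in H$, write the vector state $f \mapsto \<\pi(f)\xi, \xi\>$ as integration against a Borel probability measure $\mu_\xi$ on $X$, and feed an approximate unit of $C_0(V_n)$ through the nondegeneracy obtained above to force $\mu_\xi(V_n) = 1$ for every $n$. Since $X \minus Gx = \bigcup_n (X \minus V_n)$ is an increasing union of closed sets of $\mu_\xi$-measure zero, this yields $\mu_\xi(X \minus Gx) = 0$, i.e., $\mu_\xi$ is concentrated on $Gx$.

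To finish, I would pick any $f \in C_0(X)$ vanishing on $Gx$ and compute
\[
\|\pi(f)\xi\|^2 = \<\pi(f^*f)\xi, \xi\> = \int_X |f|^2 \, d\mu_\xi = 0,
\]
since the integrand vanishes on the support of $\mu_\xi$. As $\xi$ was arbitrary this gives $\pi(f) = 0$, and faithfulness of $\pi$ then forces $f = 0$. Thus the ideal of functions in $C_0(X)$ vanishing on $Gx$ is trivial, which by Urysohn's lemma in the locally compact Hausdorff setting forces $X = Gx$.

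The main obstacle is the first step: extracting any useful structural information about $\pi$ from the bare hypotheses of irreducibility and faithfulness. This is resolved by exploiting the countable supply of $G$-invariant ideals $C_0(V_n)$ provided by first countability via \lemref{nbd} --- irreducibility dichotomizes the subspaces $\pi(C_0(V_n))H$, faithfulness picks off the nonzero branch, and the countable nested intersection then pushes every vector-state measure onto the single orbit $Gx$. Once the measure is concentrated there, the measure-theoretic step and the final Urysohn argument are routine.
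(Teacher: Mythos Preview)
Your argument is correct and follows the same overall strategy as the paper: invoke \lemref{nbd} to write an arbitrary orbit as a countable decreasing intersection of open $G$-invariant sets, use irreducibility to dichotomize, and then pass to the limit via countable additivity.

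The difference is in packaging. The paper extends $\pi$ to the bounded Borel functions and works with the associated projection-valued spectral measure $P$: irreducibility forces $P(E)\in\{0,1\}$ for every $G$-invariant Borel set $E$, faithfulness gives $P(O)=1$ for every nonempty $G$-invariant open $O$, and then $P(G\cdot x)=\lim_n P(O_n)=1$ directly shows that every orbit has full spectral measure, hence there is only one. You instead stay with scalar-valued measures, applying the irreducibility dichotomy to the essential subspaces $\overline{\pi(C_0(V_n))H}$ and then pushing each vector-state measure $\mu_\xi$ onto $G\cdot x$; this costs you the extra Urysohn step at the end but buys you a proof that avoids the spectral-measure machinery and uses only the Riesz representation theorem. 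The two arguments are really the same under the identifications $\overline{\pi(C_0(V_n))H}=P(V_n)H$ and $\mu_\xi(E)=\langle P(E)\xi,\xi\rangle$.
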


\begin{proof}
We can extend $\pi$ to a representation of the algebra of bounded Borel functions on $X$,
and we let $P$ be the associated spectral measure 
(see, e.g., \cite[Theorem~2.5.5]{GM} for a version of the relevant theorem in the nonsecond-countable case; Murphy states the theorem for compact Hausdorff spaces, but it applies equally well to locally compact spaces by passing to the one-point compactification).
Since $(\pi,U)$ is irreducible, for every $G$-invariant Borel set $E$ we have $P(E)$ = 0 or 1.
In particular each orbit has spectral measure 0 or 1, and there can be at most one orbit with measure 1.

Claim: every nonempty $G$-invariant open subset $O$ of $X$ has spectral measure 1.
It suffices to show that $P(O)\ne 0$.
Since $O\ne\varnothing$, we can choose a nonzero $f\in C_0(X)$ supported in $O$.
Then
\[
0\ne \pi(f)=\pi(f\Chi_O)=\pi(f)P(O),
\]
so $P(O)\ne 0$.

Let $x\in X$. We will show that $X=G\cdot x$.
By \lemref{nbd} we can choose a decreasing sequence $\{O_n\}$ of open $G$-invariant sets with $\bigcap_1^\infty O_n=G\cdot x$.
By the 
properties of spectral measures, we have
\[
P(G\cdot x)=\lim_nP(O_n)=1.
\]
Thus every orbit has spectral measure 1, so there can be only one orbit.
\end{proof}

\begin{proof}[Proof of \propref{quasi}]
Let $(\pi,U)$ be an irreducible covariant representation of $(C_0(X),G)$ on a Hilbert space $H$.
Then $\ker\pi$ is a $G$-invariant ideal of $C_0(X)$, so there is a closed $G$-invariant subset $Y$ of $X$ such that
\[
\ker\pi=\{f\in C_0(X):f|_Y=0\}.
\]
We will show that $Y$ consists of a single orbit.
The restriction map $f\mapsto f|_Y$ is a $G$-equivariant homomorphism of $C_0(X)$ to $C_0(Y)$,
and $\ker\pi=C_0(X\minus Y)$,
so $\pi$ factors through a faithful representation $\rho$ of $C_0(Y)$
such that $(\rho,U)$ is an irreducible covariant representation of $(C_0(Y),G)$.
Then $Y$ is a single orbit, by \lemref{special}.
\end{proof}

\begin{proof}[Proof of \thmref{pointwise regular}]
For each $x\in X$,
the orbit $G\cdot x$ is closed,
the isotropy subgroup $G_x$ is compact,
and the canonical bijection $G/G_x\to G\cdot x$ is an equivariant homeomorphism.
Thus $G_x$ is in particular amenable, so
it follows from the above and
\cite[Corollary~4.3]{qs:regularity}
(see also \cite[Theorem~3.15]{K})
the associated action of $G$ on $C_0(G\cdot x)$ has full and reduced crossed products equal.
The restriction map $\phi_x:C_0(X)\to C_0(G\cdot x)$
is equivariant,
and 
we get an equivariant injective homomorphism
\[
\phi:C_0(X)\to M\left(\bigoplus_{x\in X}C_0(G\cdot x)\right).
\]

By \propref{quasi} the action of $G$ on $C_0(X)$ is quasi-regular, so every irreducible covariant representation of $(C_0(X),G)$ factors through a representation of $(C_0(G\cdot x),G)$ for some orbit $G\cdot x$.
It follows that
the crossed-product homomorphism
\[
\phi\rtimes G:C_0(X)\rtimes G\to M\left(\biggl(\bigoplus_{x\in X}C_0(G\cdot x)\biggr)\rtimes G\right)
\]
is faithful.
Therefore the theorem follows from \corref{separate}.
\end{proof}

The above strategy 
can also be used to prove the following folklore result,
which is a mild extension of Phillips' full-equals-reduced theorem.
Actually, we could not find the following result explicitly recorded in the literature, but it seems to us that it must have been noticed before.

\begin{prop}\label{locally regular}
If a $G$-space $X$ is locally proper then
the associated action $(C_0(X),\alpha)$ has full and reduced crossed products equal.
\end{prop}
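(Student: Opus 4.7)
The plan is to follow the two-stage approach of \thmref{pointwise regular}: first establish full-equals-reduced for proper actions without any countability hypothesis, then bootstrap to the locally proper case via \corref{separate} applied to the proper open invariant cover of $X$ noted in \remref{locally proper C0(X)}.

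For the proper case, the argument used for \thmref{pointwise regular} needs only a cosmetic change. Each orbit $G\cdot x$ is equivariantly homeomorphic to $G/G_x$ with compact (hence amenable) isotropy, so \cite[Corollary~4.3]{qs:regularity} gives full-equals-reduced on $C_0(G\cdot x)$. The quasi-regularity step requires no first countability here: for a proper action $G\under X$ is Hausdorff, by the characterization of proper as locally proper with Hausdorff quotient in \secref{spaces}, hence almost Hausdorff, so \cite[Corollary~19]{gre:local} applies directly---bypassing \propref{quasi} entirely. Applying \corref{separate} to the restriction maps $\phi_x:C_0(X)\to C_0(G\cdot x)$ then concludes the proper case.

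For the general locally proper case, write $X=\bigcup_i U_i$ as a union of open $G$-invariant subsets on which $G$ acts properly, and apply \corref{separate} with $A_i=C_0(U_i)$ and $\phi_i:C_0(X)\to M(C_0(U_i))=C_b(U_i)$ the restriction maps. These are $G$-equivariant by $G$-invariance of $U_i$, each $C_0(U_i)$ has full equals reduced by the proper case just treated, and $\bigcap_i\ker\phi_i=\{0\}$ because the $U_i$ cover $X$. The crux, and where I expect the main work to lie, is verifying that $\phi\rtimes G$ is faithful. The key observation is that each $C_0(U_i)$ sits in $C_0(X)$ not merely as a subalgebra but as a $G$-invariant \emph{ideal}, with $\sum_i C_0(U_i)$ dense (by a partition of unity argument on $C_c(X)$). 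Given an irreducible covariant representation $(\pi,U)$ of $(C_0(X),G)$ on a Hilbert space $H$, the subspace $\overline{\pi(C_0(U_i))H}$ is $\pi$- and $U$-invariant, hence is $0$ or $H$ by irreducibility; density forces it to equal $H$ for some $i$, and then $(\pi|_{C_0(U_i)},U)$ is an irreducible covariant representation of $(C_0(U_i),G)$ whose canonical multiplier-algebra extension recovers $(\pi,U)$ via $\phi_i$. Since every irreducible representation of $C_0(X)\rtimes G$ thus factors through some $C_0(U_i)\rtimes G$ along $\phi_i\rtimes G$, the homomorphism $\phi\rtimes G$ is faithful and \corref{separate} finishes the proof.
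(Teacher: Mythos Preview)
Your proof is correct and follows the same overall architecture as the paper: handle the proper case via quasi-regularity (using that $G\under X$ is Hausdorff, hence \cite[Corollary~19]{gre:local} applies without any countability hypothesis), then reduce the locally proper case to the proper case using the open invariant cover $\{U_i\}$.

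The only substantive difference is in how you verify faithfulness of $\phi\rtimes G$ in the second step. The paper packages this into a separate \corref{ideals}, whose proof is purely algebraic: since the ideals $C_0(U_i)\rtimes G$ densely span $C_0(X)\rtimes G$, a nonzero kernel of $\phi\rtimes G$ would have to meet some $C_0(U_i)\rtimes G$ nontrivially, contradicting faithfulness of $\phi_i|_{C_0(U_i)}\rtimes G$ (which holds because $C_0(U_i)$ is an invariant ideal). Your argument instead works representation-theoretically, showing each irreducible covariant representation of $(C_0(X),G)$ restricts nondegenerately to some $C_0(U_i)$ and hence factors through $\phi_i\rtimes G$. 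Both arguments exploit the same underlying fact---that the $C_0(U_i)$ are invariant ideals with dense span---and are essentially equivalent; the paper's version is a bit more streamlined because it avoids discussing irreducibility of the restricted pair and the multiplier extension, while yours makes the connection to the orbit-by-orbit analysis of the proper case more visible.
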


Note that there is no countability hypothesis on $X$.

We need the following, which
will play a role similar to that of \corref{separate} in the pointwise proper case:

\begin{cor}\label{ideals}
Let $(A,\alpha)$ be an action, and let $\{J_i\}_{i\in I}$ be a family of $G$-invariant ideals that densely span $A$. 
If for every $i$ the restriction of the action to $J_i$ has full and reduced crossed products equal, then the action on $A$ has the same property.
\end{cor}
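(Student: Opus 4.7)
The plan is to apply \corref{separate} with $A_i=J_i$ and $\alpha_i=\alpha|_{J_i}$. For each $i$ the ideal structure supplies a canonical nondegenerate $*$-homomorphism $\phi_i\colon A\to M(J_i)$, characterized by $\phi_i(a)\cdot j=aj$ for $j\in J_i$, and $G$-invariance of $J_i$ makes $\phi_i$ equivariant. By hypothesis each $(J_i,\alpha_i)$ has full and reduced crossed products equal, so the two things that need checking are (i) $\bigcap_i\ker\phi_i=\{0\}$ and (ii) the integrated homomorphism
\[
\phi\rtimes G\colon A\rtimes_\alpha G\to M(B\rtimes_\beta G)
\]
is faithful, where $B=\bigoplus_i J_i$ and $\beta=\bigoplus_i\alpha_i$.

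Item (i) is immediate: if $aJ_i=0$ for every $i$, then $a$ annihilates the linear span of $\bigcup_i J_i$, which is dense in $A$ by hypothesis, so $aA=0$ and $a=0$.

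Item (ii) is the main technical step. Since each $J_i$ is a closed $G$-invariant ideal of $A$, by \cite[Proposition~12]{gre:local} each $J_i\rtimes_\alpha G$ sits canonically as a closed two-sided ideal of $A\rtimes_\alpha G$, and a routine partition-of-unity argument shows that the algebraic sum $\sum_i (J_i\rtimes_\alpha G)$ is dense in $A\rtimes_\alpha G$ (approximate $f\in C_c(G,A)$ in $L^1$-norm by continuous functions valued in finite sums of the $J_i$). Composition of $\phi\rtimes G$ with the projection $M(B\rtimes_\beta G)\to M(J_i\rtimes_\alpha G)$ yields $\phi_i\rtimes G$, and a check on the canonical generators $j_A(j)$ and $j_G(f)$ shows that $\phi_i\rtimes G$ restricted to the ideal $J_i\rtimes_\alpha G$ is just the canonical embedding $J_i\rtimes_\alpha G\hookrightarrow M(J_i\rtimes_\alpha G)$, which is injective.

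Now if $x\in A\rtimes_\alpha G$ satisfies $(\phi\rtimes G)(x)=0$, then $(\phi_i\rtimes G)(x)=0$ for every $i$, and for any $y\in J_i\rtimes_\alpha G$ we have $xy\in J_i\rtimes_\alpha G$ with
\[
(\phi_i\rtimes G)(xy)=(\phi_i\rtimes G)(x)\cdot(\phi_i\rtimes G)(y)=0,
\]
forcing $xy=0$ by the preceding injectivity. Hence $x$ annihilates the dense set $\sum_i(J_i\rtimes_\alpha G)$, giving $x=0$. With (i) and (ii) in hand, \corref{separate} delivers the conclusion.
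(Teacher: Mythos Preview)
Your proof is correct and follows essentially the same approach as the paper: both apply \corref{separate} with $\phi_i\colon A\to M(J_i)$ the canonical multiplier maps, verify $\bigcap_i\ker\phi_i=\{0\}$ from the density hypothesis, and reduce faithfulness of $\phi\rtimes G$ to the fact that the $J_i\rtimes G$ densely span $A\rtimes_\alpha G$ together with injectivity on each $J_i\rtimes G$. The only cosmetic difference is in how the last step is phrased: the paper argues by contradiction that a nonzero kernel, being an ideal, must meet some $J_i\rtimes G$ nontrivially, while you show directly that any $x\in\ker(\phi\rtimes G)$ annihilates each $J_i\rtimes G$ and hence all of $A\rtimes_\alpha G$; these are the same idea.
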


\begin{proof}
For each $i$ let $\alpha_i=\alpha|_{J_i}$,
let
$\phi_i:A\to M(J_i)$ be the $\alpha-\alpha_i$ equivariant homomorphism induced by the $A$-bimodule structure on $J_i$,
and let $\phi:A\to M(\bigoplus_{i\in I}J_i)$ be the associated equivariant homomorphism,
Since $A=\clspn_{i\in I}J_i$, we have $\bigcap_{i\in I}\ker\phi_i=\{0\}$.
Thus, by \corref{separate} we only need to show that
\[
\phi\rtimes G:A\rtimes_\alpha G\to M\left(\biggl(\bigoplus_{i\in I}J_i\biggr)\rtimes_{\alpha_i} G\right)
\]
is faithful.
Suppose that $\ker (\phi\rtimes G)\ne \{0\}$.
The ideals $J_i\rtimes_{\alpha_i} G$ densely span $A\rtimes_\alpha G$, since
the $J_i$'s densely span $A$.
Thus we can find $i\in J$ such that
\[
\{0\}\ne \ker(\phi\rtimes G)\cap (J_i\rtimes_{\alpha_i} G)=\ker(\phi|_{J_i}\rtimes G).
\]
But $\phi|_{J_i}\rtimes G$ is faithful since $\phi|_{J_i}$ is faithful and $J_i$ is a $G$-invariant ideal, so we have a contradiction.
\end{proof}

\begin{proof}[Proof of \propref{locally regular}]
First, if the $G$-space $X$ is actually proper,
then $G\under X$ is Hausdorff, so by \cite[Corollary~19]{gre:local} the action of $G$ on $C_0(X)$ is quasi-regular, so the conclusion follows as in the proof of \propref{pointwise regular}.
In the general case, $X$ is a union of open $G$-invariant subsets $U_i$, on each of which $G$ acts properly.
Then $C_0(X)$ is densely spanned by the ideals $C_0(U_i)$,
so by properness the associated actions $\alpha_i$ have full and reduced crossed products equal,
and hence the conclusion follows from \lemref{ideals}.
\end{proof}

\begin{rem}\label{proper case}
In the above proof we appealed to \cite[Corollary~19]{gre:local},
whose proof involved dense points in irreducible closed sets.
In the spirit of the techniques of the current paper, we offer an alternative argument: assume that $X$ is a proper $G$-space. To see that the action is quasi-regular,
as in the proof of \propref{quasi} we can assume without loss of generality that there is an irreducible covariant representation $(\pi,U)$ of $(C_0(X),G)$ such that $\pi$ is faithful.
We must show that $X$ consists of a single $G$-orbit.
Suppose $G\cdot x$ and $G\cdot y$ are distinct orbits in $X$.
By properness, the quotient space $G\under X$ is Hausdorff, so we can find disjoint open 
neighborhoods of $G\cdot x$ and $G\cdot y$ in $G\under X$,
and hence nonempty disjoint open $G$-invariant sets $U$ and $V$ in $X$.
But, as in the proof of \lemref{special}, letting $P$ denote the spectral measure associated to the representation $\pi$ of $C_0(X)$, every nonempty $G$-invariant open subset $O$ of $X$ has $P(O)=1$.
Since 
we cannot have two disjoint open sets with spectral measure 1,
we have a contradiction.
\end{rem}

The above methods quickly lead to another property of the crossed product.
Recall that a $C^*$-algebra is called \emph{CCR}, or \emph{liminal} \cite[Definition~4.2.1]{dix}, if every irreducible representation is by compacts.
In the second countable case, the following result is contained in \cite[Proposition~7.31]{danacrossed}.

\begin{prop}\label{ccr}
Let $X$ be a $G$-space.
In either of the following two situations, the crossed product $C_0(X)\rtimes G$ is CCR:
\begin{enumerate}
\item the action of $G$ is locally proper;
\item the action is pointwise proper and $X$ is first countable.
\end{enumerate}
\end{prop}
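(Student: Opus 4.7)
The plan is to show that every irreducible representation of $C_0(X)\rtimes G$ is by compact operators. Each such representation is the integrated form $\pi\rtimes U$ of an irreducible covariant pair $(\pi,U)$ of $(C_0(X),G)$ on some Hilbert space $H$, so the task is to show that $\pi\rtimes U$ sends $C_0(X)\rtimes G$ into $\KK(H)$. The strategy is to factor $\pi\rtimes U$ through the crossed product associated to a single $G$-orbit, which by the imprimitivity theorem is Morita equivalent to the group $C^*$-algebra of a compact stabilizer.

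\emph{Reduction to an orbit (quasi-regularity).} In both cases I claim there is an orbit $G\cdot x$ with $\ker\pi=\{f\in C_0(X):f|_{G\cdot x}=0\}$. In case~(2) this is precisely \propref{quasi}. In case~(1) I would write $\ker\pi=\{f\in C_0(X):f|_Y=0\}$ for the closed $G$-invariant set $Y$ cut out by the kernel, so $\pi$ factors as $\pi=\rho\circ r_Y$ with $\rho$ a faithful representation of $C_0(Y)$. Local properness restricts to $Y$, and for each nonempty open $G$-invariant $V\subseteq Y$ on which $G$ acts properly, the subrepresentation $(\rho|_{C_0(V)},U)$ is irreducible covariant with $\rho|_{C_0(V)}$ faithful; \remref{proper case} then forces $V$ to be a single $G$-orbit. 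Thus every orbit in $Y$ is open, hence clopen since locally proper actions have closed orbits, and the spectral measure $P$ associated to $\pi$ satisfies $P(E)\in\{0,1\}$ on $G$-invariant Borel $E$. Since at most one element of a family of pairwise disjoint clopen orbits can have $P=1$ and $\rho$ is faithful, $Y$ must be a single orbit.

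\emph{Orbit structure and the compact-stabilizer crossed product.} Both hypotheses yield pointwise properness (in case~(1) via the corollary stating that local properness implies pointwise properness), and the earlier structural proposition then gives that $G_x$ is compact and $G\cdot x$ is $G$-equivariantly homeomorphic to $G/G_x$. By the Green imprimitivity theorem, $C_0(G/G_x)\rtimes G$ is Morita equivalent to $C^*(G_x)$; because $G_x$ is compact, $C^*(G_x)$ is a $c_0$-direct sum of finite-dimensional matrix algebras and hence CCR, and Morita equivalence preserves the CCR property. Writing $\pi\rtimes U=(\rho\rtimes U)\circ(r\rtimes G)$, where $r:C_0(X)\to C_0(G\cdot x)$ is restriction, the image of $\pi\rtimes U$ coincides with that of the irreducible representation $\rho\rtimes U$ of the CCR algebra $C_0(G\cdot x)\rtimes G$, and therefore lies in $\KK(H)$.

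\emph{Main obstacle.} The delicate part is the quasi-regularity argument in case~(1) without any countability assumption on $X$: one cannot use the $G_\delta$ approach of \propref{quasi}, so one must patch the single-orbit conclusion locally (on proper open invariant pieces via \remref{proper case}) and then globalize using the spectral-measure dichotomy on $G$-invariant clopen orbits. Everything else is a straightforward application of pointwise-proper structure theory together with the imprimitivity theorem.
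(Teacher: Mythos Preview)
Your argument is correct, and for case~(2) it coincides with the paper's. For case~(1), however, you take a genuinely different route.

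The paper does \emph{not} attempt to prove quasi-regularity for an arbitrary locally proper action. Instead it first disposes of the proper case (where $G\backslash X$ is Hausdorff, so quasi-regularity follows from \cite[Corollary~19]{gre:local} or \remref{proper case}), and then for the general locally proper case it observes that $C_0(X)\rtimes G$ is the closed span of the ideals $C_0(U_i)\rtimes G$ with each $U_i$ an open $G$-invariant proper $G$-space; since each of these ideals is CCR by the proper case, and every $C^*$-algebra has a largest CCR ideal \cite[Proposition~4.2.6]{dix}, the whole crossed product is CCR. This is shorter and more structural.

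Your approach instead establishes quasi-regularity for locally proper actions outright: you pass to the closed invariant set $Y$ on which $\rho$ is faithful, restrict local properness to $Y$, use \remref{proper case} on each open proper invariant piece to see that every orbit in $Y$ is open (hence clopen), and then invoke the spectral-measure dichotomy to conclude $Y$ is a single orbit. This works (the restriction of an irreducible covariant pair to a nonzero invariant ideal is again irreducible and nondegenerate, and faithfulness of $\rho$ on $C_0(Y)$ passes to $C_0(V)$), and it has the virtue of avoiding the Dixmier largest-CCR-ideal fact, giving in passing the stronger statement that locally proper actions are quasi-regular. The cost is a slightly more delicate spectral-measure argument in place of a one-line appeal to a standard structural result.
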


\begin{proof}
(1)
If the $G$-space $X$ is actually proper, then this is well-known.
To illustrate how the above methods apply, we give the following argument.
We have seen above that the action is quasi-regular,
and hence for every irreducible covariant representation $(\pi,U)$ of $(C_0(X),G)$
factors through an irreducible representation of the restriction of the action
to $(C_0(G\cdot x),G)$ for some $x\in X$.
The $G$-spaces $G\cdot x$ and $G/G_x$ are isomorphic,
and $C_0(G/G_x)\rtimes G$ is Morita equivalent to $C^*(G_x)$
by
Rieffel's version of Mackey's Imprimitivity Theorem \cite[Section~7]{rie:induced}.
Since the isotropy subgroup $G_x$ is compact, $C^*(G_x)$ is CCR, and hence the image of the integrated form $\rho\times U$, which equals the image of $\pi\times U$, is the algebra of compact operators.

In the general case, $X$ is a union of open $G$-invariant proper $G$-spaces $U_i$,
so $C_0(X)\rtimes G$ is the closed span of the CCR ideals $C_0(U_i)\rtimes G$.
Since every $C^*$-algebra has a largest CCR ideal \cite[Proposition~4.2.6]{dix},
$C_0(X)\rtimes G$ must be CCR.

(2)
By \propref{quasi} the action is quasi-regular,
and it follows as in part (1) that $C_0(X)\rtimes G$ is CCR.
\end{proof}

\begin{rem}
As remarked in
\cite[Example~2.7 (3)]{delaroche},
it follows from
\cite[Corollary~2.1.17]{ADR}
that if an action of $G$ on $X$ is proper then the action is amenable
(a condition involving approximation by positive-definite functions).
By \cite[Theorem~5.3]{delaroche}, if a $G$-space $X$ is amenable then the associated action $\alpha$ on $C_0(X)$ has full and reduced crossed products equal.
This raises a question:
is every pointwise proper action amenable?
It seems that amenability of the $G$-space is closely related to equality of full and reduced crossed products: by \cite[Theorem~3.3]{matsumura}, for an action of a discrete exact group $G$ on a compact space $X$,
if $\alpha$ has full and reduced crossed products equal then the action is amenable.
Unfortunately, this is of no help for our question, because a noncompact group cannot act pointwise properly on a compact space.
\end{rem}

\section{Properness conditions for coactions}\label{coactions}

We will now dualize the properness properties of Definitions~\ref{s-proper} and \ref{w-proper}.

To motivate how this will go, we pause to recall some basic facts regarding $C^*$-tensor products, commutative $C^*$-algebras, and actions.

For locally compact Hausdorff spaces $X,Y$ we have
the standard identifications
\[
C_0(X\times Y)=C_0(X)\otimes C_0(Y)
\]
and
\[
C_b(X)=M(C_0(X)).
\]

For a $C^*$-algebra $A$ we have
\[
A\otimes C_0(G)=C_0(G,A)
\]
and
\[
M(A\otimes C_0(G))=C_b(G,M^\beta(A)),
\]
where
$M^\beta(A)$ denotes the multiplier algebra
$M(A)$ 
with 
the strict topology.

For an action $(A,\alpha)$ we have a homomorphism
\[
\wilde\alpha:A\to M(A\otimes C_0(G))
\]
given by
\[
\wilde\alpha(f)(s,x)=\wilde\alpha(f)(s)(x)=f(sx)=\alpha_{s\inv}(f)(x).
\]
In fact, the image of $\wilde\alpha$ lies in the $C^*$-subalgebra
$\wilde M(A\otimes C_0(G))$,
where for any $C^*$-algebras $A$ and $D$
\[
\wilde M(A\otimes D):=\{m\in M(A\otimes D):m(1\otimes D)\cup (1\otimes D)m\subset A\otimes D\}.
\]

Using the above facts,
\corref{proper action} can be restated as follows:

\begin{lem}
An action $(A,\alpha)$ is
s-proper if and only if
\[
\wilde\alpha(A)(A\otimes 1_{M(C_0(G))})\subset A\otimes C_0(G),
\]
and is
w-proper if and only if
for all $\omega\in A^*$,
\[
(\omega\otimes\id)\circ\wilde\alpha(A)\subset C_0(G).
\]
\end{lem}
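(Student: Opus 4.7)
The plan is to unwind both characterizations using the identifications recalled just above the lemma, namely $A\otimes C_0(G)=C_0(G,A)$ and $\wilde M(A\otimes C_0(G))\subset C_b(G,M^\beta(A))$, together with the formula $\wilde\alpha(a)(s)=\alpha_{s\inv}(a)$. Note that once each side is rewritten as a condition on a function of $s$, the substitution $s\mapsto s\inv$ is a homeomorphism of $G$ preserving $C_0$, so it can be applied freely to switch between $\alpha_s(a)$ and $\alpha_{s\inv}(a)$.

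For the first equivalence, fix $a,b\in A$ and compute $\wilde\alpha(a)(b\otimes 1)$ pointwise: as a $C_b(G,M^\beta(A))$-valued function it sends $s$ to $\alpha_{s\inv}(a)b$, which actually takes values in $A$. Hence $\wilde\alpha(a)(b\otimes 1)\in A\otimes C_0(G)=C_0(G,A)$ iff $s\mapsto \alpha_{s\inv}(a)b$ vanishes at infinity in $A$, iff $s\mapsto \alpha_s(a)b\in C_0(G,A)$. Quantifying over $a,b\in A$ gives exactly \defnref{s-proper}.

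For the second equivalence, fix $a\in A$ and $\omega\in A^*$. The element $\wilde\alpha(a)\in\wilde M(A\otimes C_0(G))$ corresponds to the bounded continuous function $s\mapsto\alpha_{s\inv}(a)$ (again $A$-valued). The slice map $\omega\otimes\id$ applied to $\wilde\alpha(a)$ produces the function $s\mapsto\omega(\alpha_{s\inv}(a))\in C_b(G)$; to justify this one can either appeal to the identification $C_b(G,M^\beta(A))\supset\wilde M(A\otimes C_0(G))$ and apply $\omega$ pointwise, or multiply first by $1\otimes f$ for $f\in C_0(G)$ to land in $A\otimes C_0(G)$, apply the genuinely defined slice map there to get $s\mapsto\omega(\alpha_{s\inv}(a))f(s)\in C_0(G)$, and then take the strict limit over $f$ in an approximate identity of $C_0(G)$. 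So $(\omega\otimes\id)\circ\wilde\alpha(a)\in C_0(G)$ iff $s\mapsto\omega(\alpha_{s\inv}(a))\in C_0(G)$ iff $s\mapsto\omega(\alpha_s(a))\in C_0(G)$, which by \defnref{w-proper} is exactly w-properness.

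The main thing to be careful about is the second direction's slice map: $\omega\otimes\id$ does not extend to all of $M(A\otimes C_0(G))$, but it is perfectly well-defined on $\wilde M(A\otimes C_0(G))$ by the approximation scheme above, and $\wilde\alpha(A)$ does land there as already noted in the excerpt. Beyond that bookkeeping there is no real obstacle: each equivalence is a direct translation of the definition.
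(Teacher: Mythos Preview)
Your proof is correct and follows exactly the approach the paper intends: the paper presents this lemma as a direct restatement of Definitions~\ref{s-proper} and \ref{w-proper} via the identifications $A\otimes C_0(G)=C_0(G,A)$ and $\wilde\alpha(a)(s)=\alpha_{s\inv}(a)$, and gives no separate proof. You have simply made explicit the pointwise computations and the care needed for the slice map on $\wilde M(A\otimes C_0(G))$, which is precisely the bookkeeping the paper leaves to the reader.
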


Now consider a coaction $(A,\delta)$ of $G$.
The main difference from actions is that the commutative $C^*$-algebra $C_0(G)$ is replaced by $C^*(G)$.
Here we will use the standard conventions for tensor products and coactions (see, e.g., \cite[Appendix~A]{enchilada},
in particular, the coaction is a homomorphism
\[
\delta:A\to \wilde M(A\otimes C^*(G)).
\]

\begin{defn}\label{proper coaction}
A coaction $(A,\delta)$ is \emph{s-proper} if
\[
\delta(A)\bigl(A\otimes 1_{M(C^*(G))}\bigr)\subset A\otimes C^*(G),
\]
and is \emph{w-proper} if
for all $\omega\in A^*$ we have
\[
(\omega\otimes\id)\circ\delta(A)\subset C^*(G).
\]
\end{defn}

\begin{rem}
In \cite[Definition~5.1]{exotic} we introduced
the above properness conditions,
but in that paper we used the term
\emph{proper coaction} for the above s-proper coaction,
and
\emph{slice proper coaction} for the above w-proper coaction (because it involves the slice map $\omega\otimes\id$).
After we submitted \cite{exotic}, we learned that
Ellwood had defined properness more generally for coactions of Hopf $C^*$-algebras
\cite[Definition~2.4]{ellwood}.
Indeed, 
\propref{proper C0(X)}
is essentially \cite[Theorem~2.9(b)]{ellwood}.
\defnref{proper coaction} should also be compared with
Condition~(A1) in \cite[Section~4.1]{Goswami-Kuku}, which concerns discrete quantum groups and involves the algebraic tensor product.
\end{rem}

\begin{rem}
An action on $C_0(X)$ can be w-proper without being s-proper,
and a fortiori a coaction can be w-proper without being s-proper, even for $G$ abelian.
\end{rem}

\begin{rem}
(1)
Just as every action of a compact group is s-proper, every coaction of a discrete group is s-proper, because then we in fact have $\delta(A)\subset A\otimes C^*(G)$.

(2)
For any locally compact group $G$ the canonical coaction $\delta_G$ on $C^*(G)$
given by the comultiplication
is s-proper, because it is symmetric in the sense that
\[
\delta_G=\Sigma\circ\delta_G,
\]
where $\Sigma$ is the flip automorphism on $C^*(G)\otimes C^*(G)$.
\end{rem}

If $(A,\delta)$ is a coaction, then
$A$ gets a Banach module structure over
the Fourier-Stieltjes algebra $B(G)=C^*(G)^*$ by
\[
f\cdot a=(\id\otimes f)\circ\delta(a)\righttext{for}f\in B(G),a\in A.
\]
In \cite[Lemma~5.2]{exotic} we proved the following dual analogue of \lemref{module noncom}

\begin{lem}\label{module coaction}
A coaction
$(A,\delta)$ is w-proper if and only if for all $a\in A$ the map $f\mapsto f\cdot a$ is 
weak*-to-weakly
continuous.
\end{lem}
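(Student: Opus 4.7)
The plan is to reduce both sides to a statement about weak*-continuity of scalar-valued linear functionals on $B(G)$, and then apply the standard duality characterization of weak*-continuous functionals on a dual Banach space.

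First I would observe that, by the very definition of the weak topology on $A$, the map $T_a\colon B(G)\to A$ sending $f\mapsto f\cdot a=(\id\otimes f)\circ\delta(a)$ is weak*-to-weakly continuous if and only if for every $\omega\in A^*$ the scalar composite $\omega\circ T_a$ is weak*-continuous on $B(G)=C^*(G)^*$. So the lemma reduces to: w-properness holds iff the function
\[
f\mapsto \omega\bigl((\id\otimes f)\circ\delta(a)\bigr)
\]
is weak*-continuous on $B(G)$ for every $a\in A$ and $\omega\in A^*$.

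Next I would invoke the slice-map identity
\[
\omega\bigl((\id\otimes f)\circ\delta(a)\bigr)=f\bigl((\omega\otimes\id)\circ\delta(a)\bigr),
\]
valid for $\omega\in A^*$, $f\in B(G)$, and $\delta(a)\in\wilde M(A\otimes C^*(G))$. This is a routine consequence of bilinearity of the slice maps, once $f$ is canonically extended from $C^*(G)$ to $M(C^*(G))$ (as a matrix coefficient of a nondegenerate representation, using the canonical unital extension). Setting $b_{\omega,a}:=(\omega\otimes\id)\circ\delta(a)\in M(C^*(G))$, the question becomes: when is the functional $f\mapsto f(b_{\omega,a})$ weak*-continuous on $C^*(G)^*$?

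The final step is the fundamental duality fact that the weak*-continuous linear functionals on a dual space $C^*(G)^*$ are exactly the evaluations at elements of $C^*(G)$. Under the canonical inclusion $M(C^*(G))\subset C^*(G)^{**}=B(G)^*$, this means $f\mapsto f(b_{\omega,a})$ is weak*-continuous iff $b_{\omega,a}\in C^*(G)$. Combining this with the two reductions above, weak*-to-weak continuity of $T_a$ for every $a\in A$ is equivalent to $(\omega\otimes\id)\circ\delta(a)\in C^*(G)$ for all $a\in A$ and $\omega\in A^*$, which is exactly w-properness in the sense of \defnref{proper coaction}. The main technical point requiring care is justifying the slice-map identity together with the compatibility of the strict extension of $f\in B(G)$ to $M(C^*(G))$ with the embedding $M(C^*(G))\hookrightarrow C^*(G)^{**}$; everything else is standard duality.
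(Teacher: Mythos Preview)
Your proposal is correct. The paper itself gives no argument here, simply citing \cite[Lemma~5.2]{exotic}; your proof is precisely the natural dualization of the paper's own \propref{module noncom} for actions, with the role of \lemref{in C0} (that a bounded continuous function lies in $C_0(G)$ iff integration against it is weak*-continuous on $M(G)$) replaced by the dual Banach-space fact that an element of $M(C^*(G))\subset C^*(G)^{**}$ gives a weak*-continuous functional on $B(G)=C^*(G)^*$ iff it lies in $C^*(G)$. The one point you flag as needing care---the slice-map identity $\omega\bigl((\id\otimes f)\delta(a)\bigr)=f\bigl((\omega\otimes\id)\delta(a)\bigr)$ for $\delta(a)\in\wilde M(A\otimes C^*(G))$ and the compatibility of the strict extension of $f$ with the embedding $M(C^*(G))\hookrightarrow C^*(G)^{**}$---is indeed standard, and once granted your argument is complete.
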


\begin{proof}
See \cite[Lemma~5.2]{exotic}.
\end{proof}

s-properness and w-properness are both preserved by morphisms.
For w-properness this is proved in \cite[Proposition~5.3]{exotic}, and here it is for s-properness:

\begin{prop}\label{morphismco}
Let $\phi:A\to M(B)$ be a nondegenerate homomorphism that is equivariant for coactions $\delta$ and $\epsilon$, respectively.
If $\delta$ is 
s-proper
then $\epsilon$ has the same property.
\end{prop}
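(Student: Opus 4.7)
The plan is to mimic the proof of the analogous s-properness morphism result for actions (\propref{morphism}), using Cohen--Hewitt factorization to reduce the claim about $\epsilon$ to the known s-properness of $\delta$. We must show that $\epsilon(b)(b'\otimes 1)\in B\otimes C^*(G)$ for every $b,b'\in B$.

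First I would use that $\phi\colon A\to M(B)$ is nondegenerate, so $\phi(A)B$ and $B\phi(A)$ equal $B$ on the nose by the Cohen--Hewitt factorization theorem. Fix $b,b'\in B$ and factor them as $b=c\phi(a)$ and $b'=\phi(a')c'$ with $a,a'\in A$ and $c,c'\in B$. (The asymmetry of the two factorizations is essential: it puts $\phi(a)$ next to $\phi(a')\otimes 1$ so that equivariance can be applied.) Extending $\epsilon$ to $M(B)$ in the standard way, equivariance gives $\epsilon(\phi(a))=(\phi\otimes\id)(\delta(a))$, so
\[
\epsilon(b)(b'\otimes 1)
=\epsilon(c)\,(\phi\otimes\id)(\delta(a))\,(\phi(a')\otimes 1)\,(c'\otimes 1)
=\epsilon(c)\,(\phi\otimes\id)\bigl(\delta(a)(a'\otimes 1)\bigr)\,(c'\otimes 1).
\]

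Next I would invoke s-properness of $\delta$: the element $y:=\delta(a)(a'\otimes 1)$ lies in $A\otimes C^*(G)$. For an elementary tensor $a_0\otimes x$ in $A\otimes C^*(G)$ we have
\[
(\phi\otimes\id)(a_0\otimes x)(c'\otimes 1)=\phi(a_0)c'\otimes x\in B\otimes C^*(G),
\]
since $\phi(a_0)c'\in B$; by norm continuity and linearity, $(\phi\otimes\id)(y)(c'\otimes 1)\in B\otimes C^*(G)$. Finally, $\epsilon(c)\in M(B\otimes C^*(G))$ acts as a multiplier, so left multiplication preserves the ideal $B\otimes C^*(G)$, and we conclude $\epsilon(b)(b'\otimes 1)\in B\otimes C^*(G)$, proving s-properness of $\epsilon$.

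The only mildly subtle point is bookkeeping in the multiplier algebra: one must verify that $\epsilon$ extends compatibly from $B$ to $M(B)$ so that equivariance $\bar\epsilon\circ\phi=(\phi\otimes\id)\circ\delta$ holds when applied to $\phi(a)$, and that the associativity of the product $\epsilon(c)\cdot(\phi\otimes\id)(y)\cdot(c'\otimes 1)$ is legal inside $M(B\otimes C^*(G))$. Both are standard for nondegenerate coactions and nondegenerate equivariant homomorphisms, so I do not anticipate a real obstacle; the whole argument is essentially the dual of the s-proper half of \propref{morphism}.
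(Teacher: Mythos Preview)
Your proof is correct and is essentially the same as the paper's: both use Cohen--Hewitt factorization through $\phi(A)$, the equivariance identity $\epsilon\circ\phi=(\phi\otimes\id)\circ\delta$, s-properness of $\delta$, and the fact that $\epsilon(B)\subset\wilde M(B\otimes C^*(G))$ multiplies $B\otimes C^*(G)$ into itself. The only cosmetic difference is that the paper writes the argument as a chain of set inclusions (working with $(B\otimes 1)\epsilon(B)$ rather than $\epsilon(B)(B\otimes 1)$, which amounts to taking adjoints) while you work element-wise with explicit factorizations.
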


\begin{proof}
We have
\begin{align*}
(B\otimes 1)\epsilon(B)
&=(B\phi(A)\otimes 1)(\phi\otimes\id)(\delta(A))\epsilon(B)
\\&=(B\otimes 1)(\phi(A)\otimes 1)(\phi\otimes\id)(\delta(A))\epsilon(B)
\\&=(B\otimes 1)(\phi\otimes\id)\bigl((A\otimes 1)\delta(A)\bigr)\epsilon(B)
\\&\subset (B\otimes 1)(\phi\otimes\id)(A\otimes C^*(G))\epsilon(B)
\\&=(B\otimes C^*(G))\epsilon(B)
\\&\subset B\otimes C^*(G)).
\qedhere
\end{align*}
\end{proof}

\begin{cor}\label{dual coaction}
Every dual coaction is s-proper.
\end{cor}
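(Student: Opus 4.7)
The plan is to mimic the proof of \corref{dual action} by combining \propref{morphismco} with the s-properness of the canonical coaction $\delta_G$ on $C^*(G)$. Specifically, given an action $(A,\alpha)$ with crossed product $A\rtimes_\alpha G$ and dual coaction $\what\alpha$, there is a canonical nondegenerate homomorphism
\[
i_G:C^*(G)\to M(A\rtimes_\alpha G)
\]
that is equivariant for the comultiplication $\delta_G$ on $C^*(G)$ and the dual coaction $\what\alpha$. Since \propref{morphismco} says that s-properness transfers along nondegenerate equivariant morphisms, it will suffice to verify that $\delta_G$ itself is s-proper.

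To see that $\delta_G$ is s-proper, I would invoke the symmetry remark recorded immediately after \defnref{proper coaction}: the comultiplication satisfies $\delta_G=\Sigma\circ\delta_G$, where $\Sigma$ is the flip on $C^*(G)\otimes C^*(G)$. Combined with the standard nondegeneracy identity
\[
\delta_G(C^*(G))(1\otimes C^*(G))=C^*(G)\otimes C^*(G),
\]
which is part of the definition of a coaction, applying $\Sigma$ to both sides produces
\[
\delta_G(C^*(G))(C^*(G)\otimes 1)=C^*(G)\otimes C^*(G),
\]
which is exactly the s-properness condition of \defnref{proper coaction}.

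I expect no real obstacle here: the only things to verify are (i) nondegeneracy of $i_G$, which is immediate because $A\rtimes_\alpha G$ is densely spanned by products $i_A(A)i_G(C^*(G))$, and (ii) the $\delta_G$-$\what\alpha$ equivariance of $i_G$, which is a standard property of the dual coaction on a crossed product. Neither requires more than citing the definition of $\what\alpha$ and the universal property of $A\rtimes_\alpha G$. Once these two points are in hand, the corollary is a one-line consequence of \propref{morphismco}.
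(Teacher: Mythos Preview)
Your proposal is correct and matches the paper's own proof essentially line for line: the paper also invokes the $\delta_G$-$\what\alpha$ equivariance of the canonical nondegenerate map $i_G:C^*(G)\to M(A\rtimes_\alpha G)$ and then appeals to \propref{morphismco} together with the s-properness of $\delta_G$. Your added justification of why $\delta_G$ is s-proper via the flip symmetry is exactly the argument the paper records in the remark preceding the corollary.
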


\begin{proof}
If $(A,\alpha)$ is an action, then the canonical nondegenerate homomorphism $i_G:C^*(G)\to M(A\rtimes_\alpha G)$ is $\delta_G-\what\alpha$ equivariant,
where $\delta_G$ is the canonical coaction on $C^*(G)$ given by the comultiplication.
Thus $\what\alpha$ is s-proper since $\delta_G$ is.
\end{proof}

Recall that if $(A,\delta)$ is a coaction then the
\emph{spectral subspaces} $\{A_s\}_{s\in G}$ are given by
\[
A_s=\{a\in M(A):\delta(a)=a\otimes s\},
\]
and the
\emph{fixed-point algebra} is $A^\delta=A_e$.

\begin{prop}
Suppose $A\cap A^\delta\ne \{0\}$.
Then the following are equivalent:
\begin{enumerate}
\item
$\delta$ is s-proper;
\item
$\delta$ is w-proper;
\item
$G$ is discrete.
\end{enumerate}
\end{prop}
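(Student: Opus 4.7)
The three implications split cleanly, with only $(2) \Rightarrow (3)$ being substantive. The implication $(3) \Rightarrow (1)$ is immediate from the remark following \defnref{proper coaction}: when $G$ is discrete, $\delta(A) \subset A \otimes C^*(G)$, so the s-proper inclusion is automatic. The implication $(1) \Rightarrow (2)$ parallels \remref{s vs w}: given $\tilde\omega \in A^*$, Cohen-Hewitt factorization produces $\omega \in A^*$ and $a \in A$ with $\tilde\omega = \omega \cdot a$, $(\omega \cdot a)(b) = \omega(ab)$; checking on elementary tensors and extending to multipliers yields
\[
(\tilde\omega \otimes \id)(\delta(b)) = (\omega \otimes \id)\bigl((a \otimes 1_{M(C^*(G))})\,\delta(b)\bigr),
\]
and the adjoint form of the s-proper inclusion shows $(a \otimes 1)\delta(b) \in A \otimes C^*(G)$, so the slice map lands in $C^*(G)$.

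For $(2) \Rightarrow (3)$, the hypothesis $A \cap A^\delta \neq \{0\}$ furnishes a nonzero $a \in A$ satisfying $\delta(a) = a \otimes 1_{M(C^*(G))}$, since the identity $e \in G$ embeds as $1_{M(C^*(G))}$. Choose $\omega \in A^*$ with $\omega(a) \neq 0$ by Hahn-Banach. Extending $\omega \otimes \id$ in the standard way to a map $\wilde M(A \otimes C^*(G)) \to M(C^*(G))$, one computes, for $c \in C^*(G)$, that $(a \otimes 1_{M(C^*(G))})(1 \otimes c) = a \otimes c$ and $(\omega \otimes \id)(a \otimes c) = \omega(a)\,c$. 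Thus $(\omega \otimes \id)(\delta(a))$ is the multiplier of $C^*(G)$ acting by scalar multiplication by $\omega(a)$, that is, $\omega(a) \cdot 1_{M(C^*(G))}$. By w-properness this multiplier lies in $C^*(G)$, which forces $1_{M(C^*(G))} \in C^*(G)$, i.e., $C^*(G)$ is unital. The well-known equivalence that $C^*(G)$ is unital if and only if $G$ is discrete then yields $(3)$.

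The only point requiring care is the bookkeeping with the multiplier-level slice map, namely confirming that $\omega \otimes \id$ extends unambiguously to $\wilde M(A \otimes C^*(G))$ and carries $a \otimes 1_{M(C^*(G))}$ to the scalar multiplier $\omega(a) \cdot 1_{M(C^*(G))}$. Once this routine verification is granted, the proof of $(2) \Rightarrow (3)$ reduces to a single application of Hahn-Banach together with the standard criterion for $C^*(G)$ to have a unit.
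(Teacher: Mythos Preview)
Your proof is correct and follows essentially the same route as the paper. The only cosmetic difference is in $(2)\Rightarrow(3)$: the paper invokes \lemref{module coaction} and slices on the $B(G)$ side, computing $f\cdot a_e=f(e)a_e$ to see that evaluation at $e$ is weak*-continuous on $B(G)$, hence $e\in C^*(G)$; you instead slice on the $A^*$ side directly from \defnref{proper coaction}, obtaining $(\omega\otimes\id)(\delta(a))=\omega(a)\,1_{M(C^*(G))}\in C^*(G)$. These are the two natural sides of the same computation and yield the same conclusion that $C^*(G)$ is unital.
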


\begin{proof}
We know (1) implies (2) and (3) implies (1).
Assume (2), and let $a_e\in A\cap A^\delta$ be nonzero.
Then
\begin{align*}
f\mapsto
&f\cdot a_e
=(\id\otimes f)\circ\delta(a_e)
=(\id\otimes f)(a_e\otimes 1)
=f(e)a_e
\end{align*}
is weak*-weak continuous from $B(G)$ to $A$,
so $f\mapsto f(e)$ is a weak* continuous linear functional on $B(G)$,
which implies
$e\in C^*(G)$,
and hence
$G$ is discrete.
\end{proof}

\begin{rem}
Of course, the above proposition applies if $A$ is unital.
Also note that when $G$ is nondiscrete a coaction $(A,\delta)$ can be s-proper and still have nonzero spectral subspaces $A_s$ (and hence nontrivial fixed-point algebra $A^\delta$, but these will be subspaces in $M(A)$ that intersect $A$ trivially.
\end{rem}

For the next lemma, recall that if $(A,\delta)$ is a coaction, then
a projection $p\in M(A)$ is called \emph{$\delta$-invariant} if
$p\in A^\delta$,
and in this case $\delta$ restricts to a coaction $\delta_p$ on the corner $pAp$:
\[
\delta_p(pap)=(p\otimes 1)\delta(a)(p\otimes 1)\in M(pAp\otimes C^*(G))\righttext{for}a\in A.
\]

\begin{lem}\label{corner}
Let $(A,\delta)$ be a coaction, and let $p$ be a $\delta$-invariant projection in $M(A)$.
If $(A,\delta)$ is s-proper, then so is the corner coaction $(pAp,\delta_p)$ defined above.
\end{lem}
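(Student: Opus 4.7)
My plan is a direct computation: unwind the definition of $\delta_p$, use the $\delta$-invariance of $p$ to move the flanking projections through $\delta$, and apply s-properness of $\delta$ to what remains.

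The key identity is that $\delta(p)=p\otimes 1$ in $M(A\otimes C^*(G))$, so extending $\delta$ to $M(A)$ in the standard way gives $\delta(ap)=\delta(a)(p\otimes 1)$ for $a\in A$. Now take arbitrary $a,b\in A$ and compute
\[
\delta_p(pap)(pbp\otimes 1)
=(p\otimes 1)\delta(a)(p\otimes 1)(pbp\otimes 1)
=(p\otimes 1)\delta(a)(p\otimes 1)(b\otimes 1)(p\otimes 1),
\]
where I used $p^2=p$. Since $\delta(a)(p\otimes 1)=\delta(ap)$ and $ap,b\in A$, s-properness of $\delta$ gives $\delta(ap)(b\otimes 1)\in A\otimes C^*(G)$. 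Compressing by $p\otimes 1$ on both sides yields an element of $pAp\otimes C^*(G)$. Hence
\[
\delta_p(pap)(pbp\otimes 1)\in pAp\otimes C^*(G).
\]
By linearity and density (every element of $pAp$ is of the form $pap$ for $a\in A$, and any element of $pAp\otimes 1$ is a limit of such products) we conclude $\delta_p(pAp)(pAp\otimes 1)\subset pAp\otimes C^*(G)$, which is s-properness of $\delta_p$.

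There is no real obstacle here; the only point requiring mild care is the justification that the strictly continuous extension of $\delta$ to $M(A)$ satisfies $\delta(ap)=\delta(a)\delta(p)=\delta(a)(p\otimes 1)$, which is immediate from $p\in A^\delta$ and the fact that such extensions are multiplicative. Everything else is a two-line manipulation inside $M(A\otimes C^*(G))$.
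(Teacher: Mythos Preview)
Your proof is correct and follows essentially the same route as the paper's: both compute $\delta_p(pAp)(pAp\otimes 1)\subset (p\otimes 1)\delta(A)(A\otimes 1)(p\otimes 1)\subset (p\otimes 1)(A\otimes C^*(G))(p\otimes 1)=pAp\otimes C^*(G)$. The only differences are cosmetic --- the paper works at the level of sets and absorbs the middle $p\otimes 1$ directly into $A\otimes 1$ rather than invoking $\delta(ap)=\delta(a)(p\otimes 1)$, and your ``linearity and density'' remark is unnecessary since every element of $pAp$ is already of the form $pap$.
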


\begin{proof}
This is a routine computation:
\begin{align*}
\delta_p(pAp)(pAp\otimes 1)
&\subset (p\otimes 1)\delta(A)(A\otimes 1)(p\otimes 1)
\\&\subset (p\otimes 1)(A\otimes C^*(G))(p\otimes 1)
\\&=pAp\otimes C^*(G).
\qedhere
\end{align*}
\end{proof}

For the 
definitions of normalization and maximalization,
we refer to
\cite[Appendix~A.7]{enchilada}
and 
\cite{ekq}.
Normalizations 
and maximalizations
always exist, and are unique up to equivariant isomorphism.

\begin{prop}
For any coaction $(A,\delta)$, the following are equivalent:
\begin{enumerate}
\item $(A,\delta)$ is s-proper;
\item The normalization $(A^n,\delta^n)$ is s-proper;
\item The maximalization $(A^m,\delta^m)$ is s-proper.
\end{enumerate}
\end{prop}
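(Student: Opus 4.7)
The implications $(3) \Rightarrow (1) \Rightarrow (2)$ are immediate from \propref{morphismco}, applied to the canonical nondegenerate equivariant surjections $A^m \to A$ and $A \to A^n$. The substance of the proposition is the upward propagation of s-properness.

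The key auxiliary observation is that the maximalization depends only on the pair $(A \rtimes_\delta G, \what\delta)$; since this pair is canonically invariant under normalization, one has $(A^n)^m \cong A^m$ as coactions (with matching $\delta^m$). It therefore suffices to establish the single implication
\[
(\star)\qquad (A,\delta)\text{ is s-proper} \;\Longrightarrow\; (A^m,\delta^m)\text{ is s-proper}.
\]
Applying $(\star)$ to $(A^n,\delta^n)$ yields $(2)\Rightarrow(3)$, and combining with $(3)\Rightarrow(1)$ closes the circle of equivalences.

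To prove $(\star)$, assume $(A,\delta)$ is s-proper and pass to the bidual. Set $B = A \rtimes_\delta G$ with dual action $\what\delta$. By \corref{dual action}, $\what\delta$ is s-proper, and then by \corref{dual coaction} the bidual coaction $\what{\what\delta}$ on $B \rtimes_{\what\delta} G$ is s-proper. The Imai--Takai--Katayama duality theorem identifies $B \rtimes_{\what\delta} G$ equivariantly with $A^m \otimes \KK(L^2(G))$, carrying $\what{\what\delta}$ to a coaction exterior-equivalent to the inflation $\delta^m \boxtimes \id_\KK$. For any rank-one projection $e \in \KK$, the element $1 \otimes e$ is $(\delta^m \boxtimes \id_\KK)$-invariant, and the induced corner coaction on $(1 \otimes e)(A^m \otimes \KK)(1 \otimes e) \cong A^m$ is just $\delta^m$. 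An application of \lemref{corner} then gives s-properness of $\delta^m$.

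The main obstacle is verifying that the exterior-equivalence cocycle relating $\what{\what\delta}$ and $\delta^m \boxtimes \id_\KK$ preserves s-properness: equivalently, that conjugation by the implementing unitary multiplier preserves the spatial subalgebra $(A^m \otimes \KK) \otimes C^*(G)$ inside $M((A^m \otimes \KK) \otimes C^*(G))$, so that s-properness of $\what{\what\delta}$ transfers to s-properness of the inflation. This reduces to a cocycle computation resting on the explicit form of the Katayama identification from \cite{ekq}; once it is in hand, the corner argument proceeds as sketched.
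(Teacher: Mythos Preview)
Your argument for $(\star)$ never uses the hypothesis that $\delta$ is s-proper. You invoke \corref{dual action} to get that $\what\delta$ is s-proper and \corref{dual coaction} to get that $\what{\what\delta}$ is s-proper, but both of those results hold for \emph{every} coaction $\delta$. So if your cocycle/corner argument could be completed as you sketch, it would show that $\delta^m$ is s-proper for every coaction, and then $(3)\Rightarrow(1)$ would force every coaction to be s-proper. This is false (for $G$ abelian it would say every action is s-proper, contradicting \exref{locally proper not proper} via \corref{proper action}). The paper makes exactly this point in the remark following the proposition: one cannot get from the automatic s-properness of $\what{\what\delta}$ to s-properness of $\delta^m$ via the Morita equivalence between them.

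Concretely, the step that fails is the one you flag as ``the main obstacle'': the exterior-equivalence cocycle between $\what{\what\delta}$ and $\delta^m\boxtimes\id_\KK$ does \emph{not} preserve s-properness in general, and equivalently the rank-one projection $1\otimes e$ is not $\what{\what\delta}$-invariant (only $\delta^m\boxtimes\id_\KK$-invariant), so \lemref{corner} does not apply to the coaction you know to be s-proper. No ``cocycle computation'' can rescue this without bringing in the hypothesis on $\delta$, since the conclusion it would yield is false in general. The paper's proof of $(2)\Rightarrow(3)$ takes a different route: it goes back to the explicit construction of the maximalization in \cite{ekq} (Lemma~3.6 and the proof of Theorem~3.3 there) and verifies directly that s-properness of the normalization propagates through that construction.
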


\begin{proof}
It follows from \propref{morphism}
that (1) implies (2) and (3) implies (1),
and a careful examination of 
the construction of the maximalization in \cite{ekq}
(particularly
Lemma~3.6
and
the proof of Theorem~3.3
in that paper)
shows that
(2) implies (3).
\end{proof}

\begin{rem}
In case the above proof seems overly fussy, note that it would not be enough to observe that
the double-dual coaction $\what{\what\delta}$ is automatically s-proper
and the maximalization $\delta^m$ is Morita equivalent to $\what{\what\delta}$,
because s-properness is \emph{not} preserved by Morita equivalence ---
otherwise every coaction of an amenable group would be s-proper!
\end{rem}

Recall from \cite[Proposition~3.1]{kmqw1} that if $\AA\to G$ is a Fell bundle then there is a coaction $\delta_\AA$ of $G$ on the (full) bundle algebra $C^*(\AA)$.

\begin{prop}\label{fell proper}
Let $\AA\to G$ be a Fell bundle. Then the coaction $(C^*(\AA),\delta_\AA)$ is s-proper.
\end{prop}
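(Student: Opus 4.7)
The plan is a direct verification on the dense $*$-subalgebra $\Gamma_c(\AA)\subset C^*(\AA)$ of continuous, compactly supported sections of $\AA$. By \cite[Proposition~3.1]{kmqw1}, the coaction $\delta_\AA$ is characterized by $\delta_\AA(a)=a\otimes s$ for homogeneous elements $a\in \AA_s\subset M(C^*(\AA))$, and by passing to integrated forms, for each $f\in\Gamma_c(\AA)$ one has
\[
\delta_\AA(f)=\int_G f(s)\otimes s\,ds
\]
as an element of $M(C^*(\AA)\otimes C^*(G))$. By density of $\Gamma_c(\AA)$ in $C^*(\AA)$ and continuity of $\delta_\AA$ and of right multiplication, the s-properness inclusion $\delta_\AA(C^*(\AA))(C^*(\AA)\otimes 1)\subset C^*(\AA)\otimes C^*(G)$ will follow once I show that $\delta_\AA(f)(g\otimes 1)\in C^*(\AA)\otimes C^*(G)$ for all $f,g\in\Gamma_c(\AA)$.

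Computing formally gives
\[
\delta_\AA(f)(g\otimes 1)=\int_G (f(s)g)\otimes s\,ds.
\]
The map $h\colon s\mapsto f(s)g$ from $G$ to $C^*(\AA)$ is continuous (by continuity of $f$ as a section of $\AA$ and of multiplication in $C^*(\AA)$) and compactly supported (since $\operatorname{supp} h\subset\operatorname{supp} f$); thus $h\in C_c(G,C^*(\AA))$. The canonical embedding $C_c(G,C^*(\AA))\hookrightarrow C^*(\AA)\otimes C^*(G)$---obtained by identifying $C^*(\AA)\otimes C^*(G)$ with $C^*(G,C^*(\AA))$ and sending an integrable function to its integrated form against the universal representation---places the right-hand side squarely in $C^*(\AA)\otimes C^*(G)$. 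This will complete the verification of s-properness.

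The main obstacle I expect is a bookkeeping issue: justifying rigorously that the strictly convergent integral in $M(C^*(\AA)\otimes C^*(G))$ coincides with the honest element of $C^*(\AA)\otimes C^*(G)$ produced by integrating the $C^*(\AA)$-valued function $h$. This requires care with the vector-valued integration framework recalled in the discussion preceding \propref{module}, together with the standard identification $C^*(\AA)\otimes C^*(G)\cong C^*(G,C^*(\AA))$ under which compactly supported continuous functions embed as their integrated forms. Once these identifications are in place the proof is essentially formal, and no ideas beyond the Fell bundle machinery of \cite{kmqw1} and the integration framework of the paper are needed.
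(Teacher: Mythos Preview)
Your approach is correct and close in spirit to the paper's, but the two arguments diverge at the key technical point. You reduce to $f,g\in\Gamma_c(\AA)$ and must then show that the strict integral $\int_G f(s)g\otimes s\,ds$ lies in $C^*(\AA)\otimes C^*(G)$; your integrand takes values only in $M(C^*(\AA)\otimes C^*(G))$ (since $s\in M(C^*(G))$), so you invoke an embedding $C_c(G,C^*(\AA))\hookrightarrow C^*(\AA)\otimes C^*(G)$. This is true --- one checks the $L^1$-bound $\bigl\|\int_G h(s)\otimes s\,ds\bigr\|\le\|h\|_1$ and then approximates $h$ in $L^1$ by elements of $C^*(\AA)\odot C_c(G)$ --- but your stated justification via ``$C^*(\AA)\otimes C^*(G)\cong C^*(G,C^*(\AA))$'' is imprecise: the latter is naturally the \emph{maximal} tensor product, whereas coactions use the minimal one, so that identification is not available in general. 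The embedding still holds, it just needs the direct argument above rather than the crossed-product identification.

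The paper sidesteps this issue entirely by a trick: instead of a general $g$, it takes the second factor of the form $f\cdot b$ with $f\in A(G)\cap C_c(G)$ (dense by nondegeneracy of the $B(G)$-module structure), and then moves $f$ across the tensor via the identity $a(t)(f\cdot b)\otimes t = a(t)b\otimes tf$. Now $tf\in C^*(G)$, so the integrand $t\mapsto a(t)b\otimes tf$ lies in $C_c(G,C^*(\AA)\otimes C^*(G))$ and the integral is manifestly in $C^*(\AA)\otimes C^*(G)$ with no further bookkeeping. Your route is more direct but leans on a fact you flag as needing care; the paper's route trades that for a small algebraic manipulation that makes the target space visible in the integrand itself.
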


\begin{proof}
We must show that for all $a,b\in C^*(\AA)$ we have $\delta(a)(b\otimes 1)\in C^*(\AA)\otimes C^*(G)$,
and by density and nondegeneracy it suffices to take $a\in \Gamma_c(\AA)$ and $b$ of the form $f\cdot b$ for $f\in A(G)\cap C_c(G)$:
\begin{align*}
\delta(a)(f\cdot b\otimes 1)
&=\int_G \bigl(a(t)f\cdot b\otimes t\bigr)\,dt
\\&=\int_G \bigl(a(t)b\otimes tf\bigr)\,dt\righttext{(justified below)}
\\&\in C^*(\AA)\otimes C^*(G),
\end{align*}
because the integrand
\[
t\mapsto a(t)b\otimes tf
\]
is in $C_c(G,C^*(\AA)\otimes C^*(G))$.
In the above computation we used the equality
\[
a(t)f\cdot b\otimes t=a(t)b\otimes tf\righttext{for all}t\in G,
\]
which we justify as follows:
computing inside $M(C^*(\AA)\otimes C^*(G))$, we have
\begin{align*}
a(t)f\cdot b\otimes t
&=\bigl(a(t)\otimes t\bigr)\bigl(f\cdot b\otimes 1\bigr)
\\&=\bigl(a(t)\otimes t\bigr)\bigl(b\otimes f\bigr)\righttext{(justified below)}
\\&=a(t)b\otimes tf,
\end{align*}
where we must now justify the equality $f\cdot b\otimes 1=b\otimes f$:
both sides can be regarded as compactly supported strictly continuous functions from $G$ to $M(C^*(\AA)\otimes C^*(G))$, and for all $s\in G$ we have
\begin{align*}
(f\cdot b\otimes 1)(s)
&=(f\cdot b)(s)\otimes 1
\\&=f(s)b(s)\otimes 1
\\&=b(s)\otimes f(s)\righttext{(since $f(s)\in\C$)}
\\&=(b\otimes f)(s).
\qedhere
\end{align*}
\end{proof}

\begin{rem}
Let $\AA$ be a Fell bundle over $G$, and let
\[
\delta_\AA^r=(\id\otimes\lambda)\circ\delta_\AA:C^*(\AA)\to M(C^*(\AA)\otimes C^*_r(G))
\]
be the reduction of the coaction $\delta_\AA$.
\cite[Theorem~3.10]{BussCoaction} shows that $\delta_\AA^r$ is \emph{integrable} in the sense that the set of positive elements $a$ in $A$ for which $\delta_\AA^r(a)$ is in the domain of the operator-valued weight $\id\otimes\varphi$ is dense in $A^+$, where $\varphi$ is the Plancherel weight on $C^*_r(G)$.
\end{rem}

\corref{dual integrable} below is a dual analogue of 
\corref{integrable then proper} (1).
To explain the terminology, we recall a few things from Buss' thesis \cite{bussthesis}.
Buss worked with reduced coactions, but as he points out in \cite[Remark~2.6.1 (4)]{bussthesis},
the theory carries over to full coactions by considering the reductions of the coactions.
Throughout, $(A,\delta)$ is a coaction of $G$.

Let $\varphi$ be the Plancherel weight on $C^*(G)$,
let $\MM_\varphi^+=\{c\in C^*(G)^+:\varphi(c)<\infty\}$,
$\NN_\varphi=\{c\in C^*(G):c^*c\in \MM_\varphi\}$,
and $\MM_\varphi=\spn \MM_\varphi^+$,
so that $\MM_\varphi^+$ is a hereditary cone in $C^*(G)$,
and coincides with both $\MM_\varphi\cap C^*(G)^+$
and $\spn \NN_\varphi^*\NN_\varphi$,
and $\varphi$ extends uniquely to a linear functional on $\MM_\varphi$.

Let $\id\otimes\varphi$ denote the associated $M(A)$-valued weight on $A\otimes C^*(G)$,
with associated objects
$\MM_{\id\otimes\varphi}^+$,
$\NN_{\id\otimes\varphi}$,
and $\MM_{\id\otimes\varphi}$,
and characterized as follows:
for $x\in (A\otimes C^*(G))^+$ we have
$x\in \MM_{\id\otimes \varphi}^+$ if and only if
there exists $a\in M(A)^+$ such that
\[
\theta(a)=(\id\otimes\varphi)\bigl((\theta\otimes\id)(x)\bigr)
\righttext{for all}\theta\in A^{*+},
\]
in which case $(\id\otimes\varphi)(x)=a$.
We have $(\id\otimes\varphi)(a\otimes c)=\varphi(c)a$ for all $a\in A$ and $c\in \MM_\varphi$.

Let $\Lambda:\NN_\varphi\to L^2(G)$ be the canonical embedding associated to the GNS construction for $\varphi$, so that
$\Lambda(bc)=\lambda(b)\Lambda(c)$ for all $b\in C^*(G)$ and $c\in \NN_\varphi$.

Let $\id\otimes\Lambda:\NN_{\id\otimes\varphi}\to M(A\otimes L^2(G))=\LL(A,A\otimes L^2(G))$
be the map associated to the KSGNS construction for $\id\otimes\varphi$,
characterized by
\[
(\id\otimes\Lambda)(x)^*\bigl(a\otimes\Lambda(c)\bigr)=(\id\otimes\varphi)\bigl(x^*(a\otimes c)\bigr)
\]
for all $x\in \NN_{\id\otimes\varphi}$, $a\in A$, and $c\in \NN_\varphi$.
We have $(\id\otimes\Lambda)(a\otimes c)=a\otimes \Lambda(c)$
for all $a\in A$ and $c\in \NN_\varphi$,
and
\[
(\id\otimes\Lambda)(xy)=(\id\otimes \lambda)(x)(\id\otimes\Lambda)(y)
\]
for all $x\in M(A\otimes C^*(G))$ and $y\in \NN_{\id\otimes\Lambda}$.

The weight $\varphi$ extends canonically to $M(C^*(G))$,
and the associated objects are denoted by
$\bar\MM_\varphi^+$, $\bar\NN_\varphi$, and $\bar\MM_\varphi$.
Similarly for the canonical extension of $\id\otimes\varphi$ to $M(A\otimes C^*(G))$,
$\bar\MM_{\id\otimes\varphi}^+$, etc.

Let
\[
A\si=\{a\in A:\delta(aa^*)\in \bar\MM_{\id\otimes\varphi}^+\}.
\]
Then the coaction $\delta$ is 
\emph{square-integrable} if $A\si$ is dense in $A$.
For $a\in A\si$ define
\[
\<\<a|\in M(A\otimes L^2(G))=\LL(A,A\otimes L^2(G))
\]
by
\[
\<\<a|(b)=(\id\otimes\Lambda)\bigl(\delta(a)^*(b\otimes 1)\bigr),
\]
then define $|a\>\>=\<\<a|^*\in \LL(A\otimes L^2(G),A)$,
and for $a,b\in A\si$ define
\[
\<\<a|b\>\>=\<\<a|\circ |b\>\>\in \LL(A\otimes L^2(G)).
\]
Then $(A,\delta)$ is \emph{continuously square-integrable} if there is a dense subspace $\RR\subset A\si$ such that
\[
\<\<a|b\>\>\in A\rtimes_\delta G\subset \LL(A\otimes L^2(G))
\righttext{for all}a,b\in A\si.
\]

\begin{cor}\label{dual integrable}
Every continuously square-integrable coaction is s-proper.
\end{cor}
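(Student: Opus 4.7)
The goal is to establish the inclusion $\delta(A)(A \otimes 1) \subset A \otimes C^*(G)$ of \defnref{proper coaction}. The strategy parallels that of \corref{integrable then proper}(1) for actions---where integrability plus uniform continuity were combined (via Barbalat) to force vanishing at infinity---but runs through the Hilbert-module machinery that underlies the definition of continuous square-integrability.

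First I would unwind what $\<\<a|b\>\> \in \LL(A \otimes L^2(G))$ actually is. Using the KSGNS defining identity
\[
\<(\id \otimes \Lambda)(x), (\id \otimes \Lambda)(y)\>_A = (\id \otimes \varphi)(x^* y)
\]
applied to $x = \delta(a^*)(c \otimes 1)$ and $y = \delta(b^*)(c' \otimes 1)$, one obtains
\[
\<\<\<a|(c'), \<\<b|(c)\>\>_A = (\id \otimes \varphi)\bigl((c'^* \otimes 1)\,\delta(ab^*)\,(c \otimes 1)\bigr),
\]
so $\<\<a|b\>\>$ encodes, slice-wise against the Plancherel weight, the element $\delta(ab^*)$ sandwiched between copies of $A \otimes 1$. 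In particular the operator $\<\<a|b\>\>$ is completely determined by the element $\delta(a)(b \otimes 1)^*$ viewed inside $M(A \otimes C^*(G))$.

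Next I would use the hypothesis $\<\<a|b\>\> \in A \rtimes_\delta G$, together with the canonical realization of $A \rtimes_\delta G$ inside $M(A \otimes \KK(L^2(G)))$ via the regular representation, to argue that the reduction $(\id \otimes \lambda)\bigl(\delta(a)(b \otimes 1)\bigr)$ lies in $A \otimes C^*_r(G)$ for $a, b \in \RR$. Because $\delta(a)(b \otimes 1)$ already belongs to $M(A \otimes C^*(G))$, one can then lift this from the reduced to the full side---either by appealing to the reduction isomorphism on elements of the appropriate form, or, if necessary, by first passing to the normalization $(A^n, \delta^n)$ (for which s-properness is equivalent by the normalization/maximalization proposition of \secref{coactions}). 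The output of this step is $\delta(a)(b \otimes 1) \in A \otimes C^*(G)$ for all $a, b \in \RR$.

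Finally, since $\RR$ is norm-dense in $A$ and both $\delta$ and multiplication are norm-continuous, the inclusion extends to $\delta(A)(A \otimes 1) \subset A \otimes C^*(G)$, giving s-properness. The main obstacle is the middle step: namely, translating the Hilbert-module statement "$\<\<a|b\>\>$ lies in $A \rtimes_\delta G$ as an operator on $A \otimes L^2(G)$" into the intrinsic tensor-product statement "$\delta(a)(b \otimes 1)$ lies in $A \otimes C^*(G)$ inside $M(A \otimes C^*(G))$". This bridge between Buss's picture and \defnref{proper coaction} is the technical heart of the argument and will likely lean directly on structural results from \cite{bussthesis}.
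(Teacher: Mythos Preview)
Your outline is a genuinely different route from the paper's, and it has a real gap that you yourself flag. The paper does not attempt any direct Hilbert-module computation at all. Instead it quotes a structural theorem from Buss's thesis: for a continuously square-integrable coaction $(A,\delta)$ there exist a Fell bundle $\AA$ over $G$ and a $\delta_\AA$--$\delta$ equivariant surjection $C^*(\AA)\to A$. Since \propref{fell proper} shows that Fell-bundle coactions are s-proper, and \propref{morphismco} shows that s-properness passes along equivariant nondegenerate homomorphisms (in particular along quotients), the corollary is immediate. So the ``structural result from \cite{bussthesis}'' you anticipate needing is precisely the Fell-bundle factorisation, and once you invoke it there is nothing left to prove: the bra-ket operators never enter.

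Your direct approach, by contrast, has two soft spots. The first is the one you identify: extracting from $\<\<a|b\>\>\in A\rtimes_\delta G$ the statement that $(\id\otimes\lambda)\bigl(\delta(a)(b\otimes 1)\bigr)\in A\otimes C^*_r(G)$. The operator $\<\<a|b\>\>=\<\<a|\circ|b\>\>$ factors through $A$, and its relationship to $\delta(a)(b^*\otimes 1)$ is mediated by the KSGNS map $\id\otimes\Lambda$ and the Plancherel weight, not by $\id\otimes\lambda$; turning membership in $A\rtimes_\delta G\subset\LL(A\otimes L^2(G))$ into a statement about $A\otimes C^*_r(G)\subset M(A\otimes\KK(L^2(G)))$ is not a formality. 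The second soft spot is the lift from the reduced to the full tensor product: knowing that $(\id\otimes\lambda)(m)\in A\otimes C^*_r(G)$ for $m\in\wilde M(A\otimes C^*(G))$ does not in general force $m\in A\otimes C^*(G)$ when $G$ is nonamenable, and passing to the normalization does not obviously repair this, since normality concerns injectivity of $j_A$ rather than of $\id\otimes\lambda$ on multipliers. Neither obstacle is clearly insurmountable, but overcoming them would amount to reproving, by hand, a good portion of Buss's Fell-bundle machinery; the paper simply cites that machinery and is done in two lines.
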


\begin{proof}
Let $(A,\delta)$ be a continuously square-integrable coaction.
\cite[Section~6.8 and Proposition~6.9.4]{bussthesis} 
gives a
Fell bundle $\AA$ over $G$
and 
a $\delta_\AA-\delta$ equivariant surjective homomorphism $\kappa:C^*(\AA)\to A$.
By \propref{morphism}, every quotient of an s-proper coaction is s-proper, so the corollary follows from \propref{fell proper}.
\end{proof}

\begin{rem}
Buss states on page 10 of \cite{bussthesis} that it is an open problem whether $\delta_\AA$ is maximal,
but in the second-countable case this is now known to be true \cite[Theorem~8.1]{kmqw1}.
\end{rem}

\section{$E$-crossed products}\label{E}

For an action $(B,\alpha)$, there are numerous crossed-product $C^*$-algebras.
The largest is the (full) crossed product $B\rtimes_\alpha G$ and the smallest is the reduced crossed product $B\rtimes_{\alpha,r} G$.
But there are frequently many ``exotic'' crossed products in between,
i.e., quotients $(B\rtimes_\alpha G)/J$ where $J$ is a nonzero ideal properly contained in the kernel of the regular representation $\Lambda$.
In \cite{graded}, inspired by work of Brown and Guentner \cite{brogue}, we introduced a tool that produces many (but not all) of these exotica.
Our strategy is to base everything on ``interesting'' $C^*$-algebras $C^*(G)/I$ between $C^*(G)$ and $C^*_r(G)$.
We call a closed ideal $I$ of $C^*(G)$ \emph{small} if
it is contained in the kernel of the regular representation $\lambda$
and is \emph{$\delta_G$-invariant},
i.e.,
the coaction $\delta_G$ descends to a coaction on
$C^*(G)/I$.
In \cite[Corollary~3.13]{graded} we proved that
$I$ is small
if and only if the annihilator $E=I\ann$ in $B(G)$ is an ideal,
which will then be \emph{large} in the sense that it is nonzero, weak* closed, and $G$-invariant,
where $B(G)$ is given the $G$-bimodule structure
\[
(s\cdot f\cdot t)(u)=f(tus)\righttext{for}f\in B(G),s,t,u\in G.
\]
Large ideals automatically contain the reduced Fourier-Stieltjes algebra $B_r(G)=C^*_r(G)^*$ \cite[Lemma~3.14]{graded},
and the map $E\mapsto \pann E$ gives a bijection between the large ideals of $B(G)$ and the small ideals $I$ of $C^*(G)$.
For a large ideal $E$ the quotient map
\[
q_E:C^*(G)\to  C^*_E(G):=C^*(G)/\pann E
\]
is equivariant for $\delta_G$ and a coaction $\delta^E_G$.

\begin{ex}
$E=\bar{B(G)\cap C_0(G)}$ is a large ideal, and if $G$ is discrete then $G$ has the Haagerup property if and only if $E=B(G)$ \cite[Corollary~3.5]{brogue}.
\end{ex}

\begin{ex}
For $1\le p\le \infty$, $E^p:=\bar{B(G)\cap L^p(G)}$ is a large ideal.
Of course $E^\infty=B(G)$.
For $p\le 2$ we have $E^p=B_r(G)$ \cite[Proposition~4.2]{graded} (and \cite[Proposition~2.11]{brogue} for discrete $G$).
If $G=\F_n$ for $n>1$,
it has been attributed to Okayasu \cite{okayasu} and (independently) to Higson and Ozawa (see \cite[Remark~4.5]{brogue}) that
for $2\le p<\infty$ the ideals $E^p$ are all distinct.
\end{ex}

Given an action $(B,\alpha)$,
we use large ideals to produce exotic crossed products by involving the dual coaction $\what\alpha$ on $B\rtimes_\alpha G$.
As in \cite{exotic}, the process is most cleanly expressed in terms of an abstract coaction $(A,\delta)$.
An ideal $J$ of $A$ is called \emph{$\delta$-invariant} if $\delta$ descends to a coaction on the quotient $A/J$.
We call an ideal $J$ \emph{small} if it is invariant and contained in the kernel of $j_A$, where $(j_A,j_G)$ is the canonical covariant homomorphism of $(A,C_0(G))$ into the multiplier algebra of the crossed product $A\rtimes_\delta G$.
For the coaction $(C^*(G),\delta_G)$, this is consistent with the above notion of small ideals of $C^*(G)$.

Recall that $A$ gets a $B(G)$-module structure by
\[
f\cdot a=(\id\otimes f)\circ\delta(a)\righttext{for}f\in B(G),a\in A.
\]
For any large ideal $E$ of $B(G)$,
\[
\JJ(E)=\{a\in A:f\cdot a=0\text{ for all }f\in E\}
\]
is a small ideal of $A$ \cite[Observation~3.10]{exotic}.
For a dual coaction $(B\rtimes_\alpha G,\what\alpha)$,
we call the quotient
\[
B\rtimes_{\alpha,E} G:=(B\rtimes_\alpha G)/\JJ(E)
\]
an \emph{$E$-crossed product}.

In the other direction, for any small ideal $J$ of $A$,
\[
\EE(J)=\{f\in B(G):(s\cdot f\cdot t)\cdot a=0\text{ for all }a\in J,s,t\in G\}
\]
is an ideal of $B(G)$,
which is $G$-invariant by construction,
and which will be weak*-closed if the coaction is w-proper.
The following is \cite[Lemma~6.4]{exotic}:

\begin{lem}\label{galois ideal}
For any w-proper coaction $(A,\delta)$,
the above maps
$\JJ$ and $\EE$ form a Galois correspondence
between the large ideals of $B(G)$ and
the 
small ideals of $A$.
\end{lem}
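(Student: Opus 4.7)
The plan is to verify directly the standard requirements of a Galois correspondence: that $\JJ$ and $\EE$ reverse inclusions, and that the double-composition inclusions $E\subseteq \EE(\JJ(E))$ and $J\subseteq \JJ(\EE(J))$ hold. Since \cite[Observation~3.10]{exotic} already gives that $\JJ(E)$ is small when $E$ is large, the only well-definedness question requiring genuine work is whether $\EE(J)$ is a large ideal of $B(G)$ when $J$ is small; that is precisely where w-properness enters.

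The order-reversing property is immediate from the set-theoretic form of both definitions. For the inclusion $E\subseteq \EE(\JJ(E))$, suppose $f\in E$; since $E$ is $G$-invariant, every translate $s\cdot f\cdot t$ lies in $E$, and by the definition of $\JJ(E)$ we obtain $(s\cdot f\cdot t)\cdot a=0$ for all $a\in \JJ(E)$, so $f\in \EE(\JJ(E))$. For $J\subseteq \JJ(\EE(J))$, specializing the defining condition of $\EE(J)$ to $s=t=e$ gives $f\cdot a=0$ for all $f\in \EE(J)$ and $a\in J$, whence $a\in \JJ(\EE(J))$.

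It remains to verify the three ingredients of ``large'' for $\EE(J)$. $G$-invariance is built into the definition. The ideal property follows from the pointwise identity $s\cdot(fg)\cdot t=(s\cdot f\cdot t)(s\cdot g\cdot t)$ together with the $B(G)$-module identity $(h_1h_2)\cdot a = h_2\cdot(h_1\cdot a)$: for $f\in\EE(J)$, $g\in B(G)$, and $a\in J$, one has $(s\cdot(fg)\cdot t)\cdot a = (s\cdot g\cdot t)\cdot((s\cdot f\cdot t)\cdot a)=0$. Nonzeroness is harmless since, as is standard for such constructions, $B_r(G)$ will sit inside $\EE(J)$.

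The main obstacle, and the only place w-properness is actually used, is establishing that $\EE(J)$ is weak*-closed. By \lemref{module coaction}, w-properness of $\delta$ is precisely the statement that for each $a\in A$ the map $f\mapsto f\cdot a$ from $B(G)$ to $A$ is weak*-to-weakly continuous. The translation $f\mapsto s\cdot f\cdot t$ is weak*-continuous on $B(G)$ for every $s,t\in G$, being the adjoint of the corresponding bounded operator on $C^*(G)$. Composing, $f\mapsto (s\cdot f\cdot t)\cdot a$ is weak*-to-weakly continuous, so its zero set is weak*-closed; $\EE(J)$, being the intersection of these zero sets over $a\in J$ and $s,t\in G$, is therefore weak*-closed, completing the proof.
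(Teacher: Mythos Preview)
Your proof is essentially correct and more detailed than what the paper itself provides: the paper does not prove \lemref{galois ideal} but simply attributes it to \cite[Lemma~6.4]{exotic}, in keeping with the announcement in the introduction that Sections~\ref{coactions}--\ref{E} are descriptive and refer to the literature for proofs. Your direct verification of the Galois axioms (order reversal and the two unit inclusions) is clean, and your argument for weak*-closedness of $\EE(J)$ via \lemref{module coaction} is exactly the intended mechanism---indeed the paper's preamble to the lemma isolates weak*-closedness as the property that hinges on w-properness.

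One point deserves a bit more care. You dismiss nonzeroness of $\EE(J)$ as ``harmless'' because ``$B_r(G)$ will sit inside $\EE(J)$.'' This is true, but it is not quite automatic from what you have written: it requires knowing that $h\cdot a=0$ for every $h\in B_r(G)$ and every $a\in\ker j_A$ (so that smallness of $J$, i.e., $J\subset\ker j_A$, forces $B_r(G)\subset\EE(J)$). That identification of $\ker j_A$ with $\JJ(B_r(G))$ is indeed standard in the coaction literature, but it is a genuine input rather than a formality; a citation (e.g., to \cite{exotic} or \cite{qdiscrete}) would make the argument self-contained. Alternatively, once you have the order-reversal and the inclusion $E\subset\EE(\JJ(E))$, smallness $J\subset\ker j_A=\JJ(B_r(G))$ immediately gives $\EE(J)\supset\EE(\JJ(B_r(G)))\supset B_r(G)$, which is perhaps the tidiest way to phrase it.
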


By \emph{Galois correspondence} we mean that
$\JJ$ and $\EE$ reverse inclusions,
$E\subset \EE(\JJ(E))$ for every large ideal $E$ of $B(G)$,
and
$J\subset \JJ(\EE(J))$ for every small ideal $J$ of $A$.

Since every dual coaction is s-proper, and hence w-proper, \lemref{galois ideal}
is applicable to $(B\rtimes_\alpha G,\what\alpha)$ for any action $(B,\alpha)$.
In \cite[Theorem~6.10]{exotic} we used this Galois correspondence to exhibit examples of small ideals $J$ that are not of the form $\JJ(E)$ for any large ideal $E$.
Buss and Echterhoff \cite[Example~5.3]{BusEch} have given examples that are better in the sense that the coaction $(A,\delta)$ is of the form $(B\rtimes_\alpha G,\what\alpha)$.
Consequently, there are exotic crossed products that are not $E$-crossed products for any large ideal $E$.

However, the real goal is not to look at exotic crossed products one at a time, but rather all at once:
In \cite{bgwexact}, Baum, Guentner, and Willett
define a \emph{crossed-product} as a functor
$(B,\alpha)\mapsto B\rtimes_{\alpha,\tau} G$,
from the category of actions to the category of $C^*$-algebras,
equipped with natural transformations
\[
\xymatrix{
B\rtimes_\alpha G \ar[r] \ar[d]
&B\rtimes_{\alpha,\tau} G \ar[dl]
\\
B\rtimes_{\alpha,r} G,
}
\]
where the vertical arrow is the regular representation,
such that the horizontal arrow is surjective.

For a large ideal $E$ of $B(G)$, the $E$-crossed product
\[
(B,\alpha)\mapsto B\rtimes_{\alpha,E} G
\]
gives a crossed-product functor in the sense of \cite{bgwexact}.

\cite{bgwexact} defines a crossed-product functor $\tau$ to be
\emph{exact}
if for every short exact sequence
\[
0\to
(B_1,\alpha_1)\to
(B_2,\alpha_2)\to
(B_3,\alpha_3)\to
0
\]
of actions the corresponding sequence of $C^*$-algebras
\[
0\to
B_1\rtimes_{\alpha_1,\tau} G\to
B_2\rtimes_{\alpha_2,\tau} G\to
B_3\rtimes_{\alpha_3,\tau} G\to
0
\]
is exact,
and
\emph{Morita compatible}
if for every action $(B,\alpha)$ the canonical \emph{untwisting isomorphism}
\[
(B\otimes\KK_G)\rtimes G\simeq (A\rtimes G)\otimes \KK_G,
\]
where $\KK_G$ denotes the compact operators on $\bigoplus_{n=1}^\infty L^2(G)$,
descends to an isomorphism
\[
(B\otimes\KK_G)\rtimes_\tau G\simeq (A\rtimes_\tau G)\otimes \KK_G
\]
of $\tau$-crossed products.
\cite[Theorem~3.8]{bgwexact}
(with an assist from Kirchberg)
shows that there is a unique minimal exact and Morita compatible crossed product,
and \cite{bgwexact} uses this to give a promising reformulation of the Baum-Connes conjecture.

If $E$ is any large ideal of $B(G)$,
the $E$-crossed product
\[
(B,\alpha)\mapsto B\rtimes_{\alpha,E} G
\]
is a crossed-product functor in the sense of \cite{bgwexact},
and it is automatically Morita compatible \cite[Lemma~A.5]{bgwexact}.

It is an open problem whether the minimal functor of \cite{bgwexact} is an $E$-crossed product for some large ideal $E$.
The counterexamples of \cite{BusEch} do not necessarily give a negative answer, because it is unknown whether they fit into a crossed-product functor.
The state of the art regarding $E$-crossed products is depressingly meager at this early stage ---
we do not even know any examples
other than $B(G)$ itself
of large ideals $E$ for which the $E$-crossed-product functor is exact for all $G$!
Of course, by definition the $B_r(G)$-crossed product is exact for an \emph{exact} group $G$
(where $B_r(G)=C^*_r(G)^*$ denotes the reduced Fourier-Stieltjes algebra).
But nonexact groups are quite mysterious.



\providecommand{\bysame}{\leavevmode\hbox to3em{\hrulefill}\thinspace}
\providecommand{\MR}{\relax\ifhmode\unskip\space\fi MR }
\providecommand{\MRhref}[2]{%
  \href{http://www.ams.org/mathscinet-getitem?mr=#1}{#2}
}
\providecommand{\href}[2]{#2}

\end{document}